\newcommand{\interior}[1]{%
  {\kern0pt#1}^{\mathrm{o}}%
}
\newcommand{\pdiv}{\mid\!\mid}
\newcolumntype{L}[1]{>{\raggedright\let\newline\\\arraybackslash\hspace{0pt}}m{#1}}
\newcolumntype{C}[1]{>{\centering\let\newline\\\arraybackslash\hspace{0pt}}m{#1}}
\newcolumntype{R}[1]{>{\raggedleft\let\newline\\\arraybackslash\hspace{0pt}}m{#1}}
\newcommand\Item[1][]{%
  \ifx\relax#1\relax  \item \else \item[#1] \fi
  \abovedisplayskip=0pt\abovedisplayshortskip=0pt~\vspace*{-\baselineskip}}%Dina: to aligne "item" with equations. If no labe, try the option t of aligned: $\! \begin{aligned}[t] equation.... end...$
\newtheorem{thm}{Theorem}[section]
\newtheorem{lemma}[thm]{Lemma}
\newtheorem{example}[thm]{Example}
\newtheorem{remark}[thm]{Remark}
\newtheorem{thmx}{Theorem}
\newtheorem{question
}{Question}
\newtheorem*{acknowledgement}{Acknowledgement}
\def\0{{\bf 0}}
\def\N{{\bf N}}
\def\Q{{\bf Q}}
\def\R{{\bf R}}
\def\Z{{\bf Z}}
\def\keywords{\xdef\@thefnmark{}\@footnotetext}
 \title {Kepler Sets of Second-Order Linear Recurrence Sequences Over $\Q_p$}
\date{}
\author{Rishi Kumar \footnote{
Department of Mathematics, Ben-Gurion
University, Beer Sheva 84105, Israel.
E-mail: kumarr@post.bgu.ac.il}}
\begin{document}
\maketitle
\keywords{2020 {Mathematics Subject Classification:} Primary 11B39, 11K41, Secondary 11E95, 22D40, 37P05.}
\keywords{{Key words and phrases. Linear recurrence sequences, $p$-adic numbers, Ratio sets, $p$-adic Möbius transformations, Monothetic groups.}}
\begin{abstract}
Let $(a_n)_{n=0}^\infty$ be a second-order linear recurrence sequence with constant coefficients over the field of $p$-adic numbers $\Q_p$. We study the set of limit points of the sequence of consecutive ratios $(a_{n+1}/a_{n})_{n=0}^\infty$ in $\Q_p$.
\end{abstract}

\section{Introduction}
Given a set $A$ of numbers (in any field), the {\it ratio set} of $A$ is
$$R(A)= \{a/a' \,|\, a,a' \in A\}.$$ 
Most studies on ratio sets deal with the real case (see, for example, \cite{Strauch} and the references therein). Here we consider the $p$-adic case.
The following theorem is due to Garcia and Luca:
\begin{thmx}\emph{\cite[Theorem 2]{Luca}} \label{Luca theorem on fibonacci}
Let $(F_n)_{n=0}^\infty$ be the Fibonacci sequence. Then the set $R(\{F_n\, | \, n\geq 1\})$ is dense in $\Q_p$ for all primes~$p$.
\end{thmx}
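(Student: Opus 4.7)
The plan is to combine the Binet formula $F_n=(\phi^n-\psi^n)/\sqrt{5}$, with $\phi,\psi=(1\pm\sqrt{5})/2$ the roots of $x^2-x-1$, with standard $p$-adic valuation estimates for Fibonacci numbers. The argument naturally splits into three cases according to the behaviour of $\sqrt{5}$ in $\Q_p$: (a) $\sqrt{5}\in\Q_p$ (equivalently, $p$ odd with $p\equiv\pm1\pmod{5}$), so $\phi,\psi\in\Z_p^{\times}$; (b) $\sqrt{5}\notin\Q_p$ with $p\neq 5$ (i.e.\ $p=2$ or $p\equiv\pm2\pmod{5}$), so $\phi$ and $\psi$ are Galois-conjugate elements of the unramified quadratic extension $\Q_{p^2}$; (c) $p=5$, where $\phi,\psi$ lie in the ramified extension $\Q_5(\sqrt{5})$.

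First I would reduce to a statement about units. The classical formulas---for $p\neq 5$, $p\mid F_n$ iff $z(p)\mid n$ (with $z(p)$ the rank of apparition of $p$) and then $v_p(F_n)=v_p(F_{z(p)})+v_p(n/z(p))$; while $v_5(F_n)=v_5(n)$---show that $v_p(F_m)-v_p(F_n)$ exhausts $\Z$. Hence density in $\Q_p$ reduces to density of the unit ratios in $\Z_p^{\times}$, which by Pisano periodicity modulo $p^k$ is in turn equivalent to the surjectivity, for every $k\geq 1$, of
$$\bigl\{(m,n):p\nmid F_mF_n\bigr\}\longrightarrow(\Z/p^k\Z)^{\times},\qquad (m,n)\mapsto F_mF_n^{-1}\bmod p^k.$$

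The main obstacle is this mod-$p^k$ surjectivity. I would treat the base case $k=1$ by a direct Binet-formula calculation in the residue extension, exploiting $\phi\psi=-1$: in case (b), $\phi/\psi$ lies in the norm-one subgroup of $\mathbb{F}_{p^2}^{\times}$, and the independent variation of $m$ and $n$ allows the ratios $(\phi^m-\psi^m)/(\phi^n-\psi^n)$ to sweep out all of $\mathbb{F}_p^{\times}$; the analogous verification is carried out in cases (a) and (c). One then bootstraps from $k$ to $k+1$ via Hensel-type lifting applied to the Binet expression. The delicate point is that $\overline{\langle\phi\rangle}$ is typically a proper closed subgroup of $\Z_p^{\times}$, so density genuinely requires the flexibility of two independent indices---this is precisely the dynamical content encoded by the $p$-adic M\"obius and monothetic-group machinery foregrounded in the paper's keywords.
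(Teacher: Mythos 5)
A preliminary remark: the paper offers no proof of Theorem \ref{Luca theorem on fibonacci} --- it is quoted from Garcia and Luca --- so there is no internal proof to measure your outline against; I therefore compare it with Garcia and Luca's actual argument and with the machinery this paper develops. Your outline has two genuine gaps. The first is the reduction to units. From (i) the valuation differences $\nu_p(F_m)-\nu_p(F_n)$ exhausting $\Z$ and (ii) density of the unit-valued ratios in $\Z_p^*$, density in $\Q_p$ does \emph{not} follow: the ratio set $R(\{F_n\})$ is not closed under multiplication, so you cannot splice a pair realizing a prescribed valuation $v$ with a pair realizing a prescribed unit $u$. To approximate $p^{v}u$ you need a \emph{single} pair $(m,n)$ that controls the valuation and the unit part simultaneously, and your reduction discards exactly that constraint. (A smaller issue: the formula $\nu_p(F_n)=\nu_p(F_{z(p)})+\nu_p(n/z(p))$ fails at $p=2$, which Theorem \ref{Luca theorem on fibonacci} covers.)

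The second gap is that the surjectivity of $(m,n)\mapsto F_mF_n^{-1}\bmod p^k$ is asserted rather than proved, and the two tools you invoke do not supply it. There is no Hensel configuration here --- no polynomial with a simple root to lift --- and, as you yourself note, the closed subgroup generated by $\phi/\psi$ is in general a \emph{proper} subgroup of the relevant unit group (Theorems \ref{prop, orbit closure of 1 in Qp}--\ref{prop, orbit of 1 in extension2, under T(z)=lambda z}), so ``independent variation of $m$ and $n$ sweeps out everything'' is precisely the statement that requires proof; indeed Theorem \ref{result, proposition on Kepler set equal to Qp}(1) shows that in your case (a) the consecutive ratios alone are \emph{not} dense, so the interaction of the two indices must be exploited quantitatively. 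Garcia and Luca sidestep both difficulties with the elementary congruence $F_{Mn}/F_n\equiv MF_{n+1}^{M-1}\ (\textup{mod}\ F_n)$: taking $n=z(p)p^{t}$ with $t$ large and letting $M$ range over integers of prescribed $p$-adic valuation $v$, the single ratio $F_{Mn}/F_n$ has valuation $v$ while its unit part (using $F_{n+1}^{2}\equiv\pm1\ (\textup{mod}\ F_n)$) can be made to approximate any prescribed unit --- settling valuation and unit part at once. If you prefer a proof in the spirit of this paper, Theorems \ref{result, proposition on Kepler set equal to Qp} and \ref{fibanacci theorem2} obtain density from one or two rows of the ratio table via M\"obius transformations and monothetic subgroups, but only under extra hypotheses on $\alpha_F(p)$, so they do not by themselves recover Theorem \ref{Luca theorem on fibonacci} for all $p$.
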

% We mention a few results concerning ratio sets in the $p$-adic fields $\Q_p$:

Sanna \cite{Sanna} extended Theorem \ref{Luca theorem on fibonacci} to the $k$-generalized Fibonacci sequence, defined by the recurrence
$$a_n=a_{n-1}+ \cdots +a_{n-k}, \qquad n\geq k,\, (k\geq 2,\, a_0=a_1=\cdots =a_{k-2}=0,a_{k-1}=1).$$

Garcia et al.\ \cite{GarciaLucaSanna} studied ratio sets of integer-valued linear recurrence sequences with constant coefficients of order 2,
\begin{equation}\label{intro, second order equation}
 a_{n}= ra_{n-1}+ sa_{n-2}, \qquad n\geq 2,  \end{equation}
for fixed $r,s\in \Z$ over $\Q_p$. For example, for the initial conditions $a_0=0, a_1=1$, they showed that, if $p\mid s$ and $p\nmid r$, then the set $R(\{a_n\,|\, n\geq 1\})$ is not dense in $\Q_p$, while if $p\nmid s$, then the set $R(\{a_n\,|\, n\geq 1\})$ is dense in $\Q_p$. For $a_0=2, a_1=r$, they showed that for any odd prime $p$ the set $R(\{a_n\,|\, n\geq 1\})$ is dense in $\Q_p$ if and only if there exists a positive integer $n$ such that $p\mid a_n$.

For more results concerning $R(A)$ for sets $A\subseteq \Z$ over $\Q_p$, we refer to \cite{Miska, GarciaLucaSanna, Donnay, Miska2, Paola} and the references therein.

Consider a recurrence sequence of order 2 as in \eqref{intro, second order equation}, but over any topological field $L$ (with arbitrary $r,s\in L$). A subset of $R(\{a_n\,| \, n\geq 0\})$ of special interest is the set of consecutive ratios $\{a_{n+1}/a_{n}\,|\, n\geq 0\}$. The limit $\lim_{n\to \infty}a_{n+1}/a_n$, if it exists, is the {\it Kepler limit} of $(a_n)_{n=0}^\infty$ (see \cite{Fiorenza}). In general, the {\it Kepler set} of $(a_n)_{n=0}^\infty$ is the closure of the set  $\{a_{n+1}/a_n\,:\, n\geq 0\}$ (see \cite{Berend1}).

The consecutive ratios $a_{n+1}/a_{n}$ of linear recurrence sequences with constant coefficients (mostly for order 2) over the fields $\R$ and ${\bf C}$ have been studied by numerous authors (see \cite{Fiorenza, Berend1, Poincare} and the references therein). Usually, given a linear recurrence sequence (of any order) with constant coefficients, the Kepler limit exists \cite[Theorem 2.3]{Fiorenza}. In \cite{Fiorenza, Berend1}, Kepler sets were studied for sequences as in \eqref{intro, second order equation}, with any real or complex coefficients $r$ and $s$, in the case where the Kepler limit does not exist. It turns out that, excluding the trivial case where $a_{n+1}/a_{n}$ is periodic (and so the Kepler set is finite), the Kepler set is either a line or a circle in the complex plane \cite[Theorem 2.2.1]{Berend1}.

In this paper, we study Kepler sets of sequences as in \eqref{intro, second order equation} over the field $\Q_p$ for primes $p\geq3$ (with arbitrary $r,s \in \Q_p$). Consecutive ratios $a_{n+1}/a_{n}$ over $\Q_p$ behave very differently than over ${\bf C}$. Thus, in Theorem \ref{Luca theorem on fibonacci}, for ``most'' primes $p$, it suffices to take two (specific) rows of the infinite square $\{F_{m}/F_{n}\, | \, m,n=1,2,3,\ldots\}$ to get density in $\Q_p$ (Theorem \ref{fibanacci theorem2} below). Moreover, for some $p$-s the Kepler set is already dense in $\Q_p$ (see Theorem \ref{result, proposition on Kepler set equal to Qp}). Whether or not there are infinitely many such primes is an open question. On the other hand, the Kepler set of the Fibonacci sequence $(F_n)_{n=0}^\infty$ over ${\bf C}$ is just a singleton: $\{(1+\sqrt{5})/2\}$.

In Section 2 we state our main results. We also characterize certain monothetic subgroups of the group of units of $\Q_p$ and of quadratic extensions thereof. In Section 3 we recall some preliminaries needed in the main proofs. Section 4 presents some examples. Section 5 is devoted to the proofs of the main results.

\begin{acknowledgement}\emph{
The author thanks D. Berend for suggesting this problem and for constant support throughout this work. The author thanks V. Anashin for discussions on $p$-adic Möbius transformations and P. Trojovsk\'{y} for information on the primes in sequence A000057 of the OEIS \cite{Sloane}. The author thanks the referee for many useful comments and suggestions regarding the results in the case of a ramified extension -- Theorems \ref{theorem for p-adic Kepler set2 of order 2 when roots are in Q_p}, \ref{result, proposition on Kepler set equal to Qp}, and \ref{fibanacci theorem2}.}
\end{acknowledgement}

\section{Main Results}\label{main result section}
Let $(a_n)_{n=0}^\infty$ be as in \eqref{intro, second order equation} with arbitrary $r,s\in \Q_p$, where $s\neq 0$.
The polynomial 
\begin{equation*}
    P(x)= x^2-rx-s
\end{equation*}
is the characteristic polynomial of $(a_n)_{n=0}^\infty$. Let $\lambda_1$ and $\lambda_2$ be the roots of $P$. We may rewrite $a_n$ as follows:
\begin{enumerate}
\item If $\lambda_1= \lambda_2= \lambda$, then
\begin{equation}\label{result, second order equation general form1}
    a_n= (c_1 + c_2 n )\lambda^n, \qquad n=0,1,2,\ldots, \, (c_1,c_2 \in \Q_p).
\end{equation}    
\item If $\lambda_1\neq \lambda_2$, then
\begin{equation}\label{result, second order equation general form2}
    a_n= c_1\lambda_1^n + c_2\lambda_2^n, \qquad n=0,1,2,\ldots, \, (c_1,c_2 \in \Q_p(\sqrt{d})),
\end{equation}
\end{enumerate}
(see \cite[p.4]{Graham}).

Denote the Kepler set by 
$$K((a_n))=\overline{\left\{a_{n+1}/a_{n}: n\geq 0\right\}},$$
and simply $K$ when the context is clear.

\begin{thm}\label{theorem, when single root}
Let $(a_n)_{n=0}^\infty$ be a second-order linear recurrence sequence over $\Q_p$, given by $a_n= (c_1 + c_2 n)\lambda^n$, with $c_1,c_2, \lambda \in \Q_p$, and $c_2,\lambda\neq 0$.
The Kepler set of $(a_n)_{n=0}^\infty$ is
\begin{equation}
    \begin{split}
     K=   \begin{cases}
            \lambda\left(1 +c_2/c_1+ (c_2/c_1)^2\cdot \Z_p\right),& \qquad |c_1|_p>|c_2|_p, \\
            \lambda \left(1 +  p\Z_p\right)^C,&\qquad |c_1|_p\leq|c_2|_p,
        \end{cases}
    \end{split}
\end{equation}
where $A^C$ denotes the complement of a set $A$.
\end{thm}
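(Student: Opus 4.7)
The plan is to compute the consecutive ratios directly. Using $a_n=(c_1+c_2n)\lambda^n$, one gets
$$\frac{a_{n+1}}{a_n}=\lambda\cdot\frac{c_1+c_2(n+1)}{c_1+c_2n}=\lambda\left(1+\frac{c_2}{c_1+c_2n}\right),$$
valid whenever $c_1+c_2n\neq 0$. Note $c_1+c_2n=0$ forces $n=-c_1/c_2$; in Case 1 below this never happens (since $c_1/c_2\notin\Z_p$), and in Case 2 it happens for at most one $n$, which does not affect the closure. After this common preparation, each case becomes the problem of taking the closure of the image of an arithmetic progression under a $p$-adic M\"obius-type map; the key ingredients are density of $\N$ in $\Z_p$, continuity of the map, and, where available, compactness of the domain.

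For Case 1 ($|c_1|_p>|c_2|_p$), set $u=c_2/c_1\in p\Z_p$ and rewrite the ratio as $\lambda(1+u/(1+un))$. Since $\N$ is dense in $\Z_p$, the set $\{un:n\geq 0\}$ is dense in $u\Z_p$, so $\{1+un\}$ is dense in the compact ball $1+u\Z_p$. On $1+u\Z_p$ every element is a unit, so the map $f(x)=1+u/x$ is continuous; because the domain is compact, $\overline{f(\{1+un\})}=f(1+u\Z_p)$. Writing any $x=1+uw\in 1+u\Z_p$ and expanding $x^{-1}=1+uw'$ for some $w'\in\Z_p$ gives $f(x)=1+u+u^2w'$, so $f(1+u\Z_p)=1+u+u^2\Z_p$. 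Multiplying through by $\lambda$ yields the first branch of the formula.

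For Case 2 ($|c_1|_p\leq|c_2|_p$), set $\alpha=c_1/c_2\in\Z_p$ and rewrite the ratio as $\lambda(1+1/(n+\alpha))$. The set $\{n+\alpha:n\geq 0\}$ is dense in $\Z_p$ (again by density of $\N$). Consider $g(x)=1+1/x$ on $\Z_p\setminus\{0\}$: the equation $g(x)=y$ is solved by $x=1/(y-1)$, and this lies in $\Z_p\setminus\{0\}$ exactly when $|y-1|_p\geq 1$, i.e., $y\notin 1+p\Z_p$. Thus the image is $(1+p\Z_p)^{C}$, which is closed. For the reverse inclusion, given any $y\notin 1+p\Z_p$, the nonzero element $x=1/(y-1)\in\Z_p\setminus\{0\}$ can be approximated by $n_k+\alpha$; since $x\neq 0$, eventually $n_k+\alpha\neq 0$, and continuity of $g$ at $x$ gives $g(n_k+\alpha)\to y$. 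Multiplying by $\lambda$ yields the second branch.

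The main technical point, and the one requiring the small care described above, is Case 2: unlike Case 1, the relevant domain $\Z_p\setminus\{0\}$ is not compact, the map $g$ blows up at $0$, and the point $n=-\alpha$ may have to be excluded from the indexing. The argument circumvents this by verifying surjectivity of $g$ onto the closed set $\{y:|y-1|_p\geq 1\}$ directly and approximating each target from within $\{n+\alpha\}\setminus\{0\}$, rather than appealing to a "closure equals image of closure" shortcut as in the compact Case 1.
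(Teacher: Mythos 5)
Your proof is correct and follows essentially the same route as the paper: rewrite $a_{n+1}/a_n$ as the image of $n$ under the M\"obius map $z\mapsto\lambda\bigl(1+1/(c_1/c_2+z)\bigr)$, use density of $\N$ in $\Z_p$, and compute the image of $\Z_p$ in the two valuation cases. Your extra care about the indices where $c_1+c_2n=0$ and about why the closure equals the image in the non-compact Case 2 is a welcome refinement of a step the paper passes over silently, but it is not a different method.
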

Let $(a_n)_{n=0}^\infty$ be as in \eqref{result, second order equation general form2}. We have
\begin{equation}\label{result sec, equation of ratios}
\frac{a_{n+1}}{a_{n}}= \frac{c_1\lambda_1^{n+1} +c_2 \lambda_2^{n+1}}{c_1\lambda_1^{n} + c_2\lambda_2^{n}}= \frac{c_1\lambda_1 + c_2\lambda_2(\lambda_2/\lambda_1)^n}{c_1 + c_2(\lambda_2/\lambda_1)^{n}}, \qquad n=0,1,2,\ldots.   
\end{equation}
If $|\lambda_1|_p\neq |\lambda_2|_p$, then by \eqref{result sec, equation of ratios} the limit $\lim_{n\to\infty}a_{n+1}/a_{n}$ exists and is either $\lambda_1$ or $\lambda_2$.
If $\lambda_1\neq \lambda_2$ and $\lambda_2/\lambda_1$ is a primitive root of unity of order $l$, then the sequence $(a_{n+1}/a_{n})_{n=0}^\infty$
is periodic with period $l$, and in particular, the Kepler set is of size~$l$. 

Our results deal with the non-trivial cases. Let us first recall a few facts about quadratic extensions of $\Q_p$.
Let $\Q_p(\sqrt{d})$ be a quadratic extension of $\Q_p$. We mention that, in fact, there are three different such extensions for $p\geq 3$, namely $\Q_p(\sqrt{N_p})$, $\Q_p(\sqrt{p})$, and $\Q_p(\sqrt{pN_p})$, where $N_p$ is the smallest positive integer that is not a square modulo $p$ (see \cite[p.72]{Mahler}). Thus, we will always assume that $d\in \Z_p^*\cup p\Z_p^*$.
The extension $\Q_p(\sqrt{d})$ is {\it ramified} if there exists an element $\pi \in \Q_p(\sqrt{d})$ such that $|\pi|_p= p^{-1/2}$; otherwise, it is {\it unramified}. In the ramified case, every non-zero element $x\in \Q_p(\sqrt{d})$ can be represented in the form
$$x= p^{\nu_p(x)}(x_0+ x_1\pi + x_2\pi^2+\cdots), 
\qquad (0\leq x_i\leq p-1,\, x_0\neq 0).$$
In this case it will be convenient to put $\nu_{\pi}(x)= 2\nu_{p}(x)$, so that 
$$x=\pi^{\nu_{\pi}(x)}(x_0+ x_1\pi + x_2\pi^2+\cdots).$$
In the unramified case, every non-zero element $x\in \Q_p(\sqrt{d})$ can be represented in the form
$$x= p^{\nu_p(x)}(z_0+ z_1\pi + z_2\pi^2+\cdots),\qquad (\pi=p),$$ 
where $z_i= x_i+ y_i\sqrt{d}$ and $0\leq x_i,y_i\leq p-1,\, x_0 + y_0\sqrt{d} \neq 0$. The fields $\Q_p$, for $p\geq 3$, have exactly one unramified quadratic extension $\Q_p(\sqrt{N_p})$. We have $\nu_p(\Q_p(\sqrt{d})^*)=\frac{1}{e}\cdot \Z$,
where $e=1$ in the unramifed case and $e=2$ in the ramified case; $e$ is the {\it ramification index} of $\Q_p(\sqrt{d})$ over $\Q_p$. Denote $f= 2/e$.

\begin{thm}\label{theorem for p-adic Kepler set of order 2 when roots are in Q_p}
Let $(a_n)_{n=0}^\infty$ be a second-order linear recurrence sequence over $\Q_p$, given by $a_n= c_1\lambda_1^n + c_2\lambda_2^n$, for a prime $p\geq 3$ and some non-zero $c_1,c_2, \lambda_1,\lambda_2$, where
$c_1,c_2, \lambda_1,\lambda_2$ belong to $\Q_p$ or to some unramified quadratic extension $\Q_p(\sqrt{d})$ thereof.
Suppose that $|\lambda_1|_p= |\lambda_2|_p>0$ and 
that $\lambda_2/\lambda_1$ is not a root of unity. Let $l$ be the order of $\lambda_2/\lambda_1$ modulo~$p$ and $k= \nu_p\left((\lambda_2/\lambda_1)^l-1\right)$.
\begin{enumerate}
\item If $-c_1/c_2 \notin \overline{\{(\lambda_2/\lambda_1)^n\,| \, n\in \N\}}$, then the Kepler set of $(a_n)_{n=0}^\infty$ is 
\begin{equation}\label{result sec, Kepler set in compact case}
    \begin{split}
        K&= \bigsqcup_{i=0}^{l-1}\left(\frac{a_{i+1}}{a_{i}} + p^{\nu_p(c_1/c_2)+\nu_p(\lambda_2-\lambda_1)-2\nu_p(c_1/c_2+(\lambda_2/\lambda_1)^i)+k}\Z_p\right),
    \end{split}
\end{equation}
where $\bigsqcup$ denotes disjoint union.
\item If $-c_1/c_2 \in \overline{\{(\lambda_2/\lambda_1)^n\, |\, n\in \N\}}$ and $l=1$, then the Kepler set of $(a_n)_{n=0}^\infty$ is
$$K= \left(\frac{\lambda_1+ \lambda_2}{2} + p^{1+\nu_p(\lambda_2)} \Z_p\right)^C.$$
\item If $-c_1/c_2 \in \overline{\{(\lambda_2/\lambda_1)^n\, |\, n\in \N\}}$, $l\geq 2$, and $s\in\{0,1,\ldots,l-1\}$ is defined by the condition
$$(\lambda_2/\lambda_1)^s\equiv  -c_1/c_2\,(\textup{mod} \ p^{k}),$$
then the Kepler set of $(a_n)_{n=0}^\infty$ is 
\begin{equation}\label{result, equation for third case of theorem}
   \begin{split}
    K= \bigsqcup_{i=0, i\neq s}^{l-1} \left(\frac{a_{i+1}}{a_{i}} + p^{\nu_p(\lambda_2-\lambda_1)+k}\Z_p\right) \bigsqcup \left(p^{\nu_p(\lambda_2)+1-k}\Z_p^C\right).
\end{split}
\end{equation}
\end{enumerate}
\end{thm}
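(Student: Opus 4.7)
The plan is to encode the consecutive ratios as a Möbius transformation acting on a geometric sequence, and then exploit the structure of topologically cyclic subgroups of $\Z_p^*$. Setting $\mu := \lambda_2/\lambda_1$ and $t := c_2/c_1$, equation \eqref{result sec, equation of ratios} rewrites as $a_{n+1}/a_n = \phi(u_n)$, where
\begin{equation*}
\phi(z) = \lambda_1 \cdot \frac{1+\mu z}{1+z} \qquad\text{and}\qquad u_n = t\mu^n.
\end{equation*}
The hypothesis $|\lambda_1|_p=|\lambda_2|_p$ gives $|\mu|_p=1$. The first preliminary is to identify $\overline{\{\mu^n:n\ge 0\}}$: for $p\ge 3$ the group $1+p^k\Z_p$ is pro-cyclic (via the $p$-adic logarithm), and $\mu^l \in 1+p^k\Z_p\setminus(1+p^{k+1}\Z_p)$ topologically generates it. Hence $\overline{\{\mu^n\}} = \bigsqcup_{i=0}^{l-1}\mu^i(1+p^k\Z_p)$, and so $\overline{\{u_n\}} = \bigsqcup_{i=0}^{l-1} t\mu^i(1+p^k\Z_p)$. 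The same holds in the unramified extension case with $\Z_p$ replaced by the corresponding integer ring, using that the non-root-of-unity $u_n$ always lies in the pro-cyclic norm-one subgroup of the unit group. The pole of $\phi$ is $z=-1$, which lies in $\overline{\{u_n\}}$ iff $-c_1/c_2 \in \overline{\{\mu^n\}}$; this is precisely the dichotomy between part (1) and parts (2)--(3).

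In part (1), $\phi$ is continuous on the compact set $\overline{\{u_n\}}$, so $K = \phi(\overline{\{u_n\}})$. I would then apply the standard $p$-adic Möbius ball-image formula --- namely that, if the ball $B = x_0+p^N\Z_p$ avoids the pole of $\phi(z)=(az+b)/(cz+d)$, then $\phi(B) = \phi(x_0) + p^{N+\nu_p(ad-bc)-2\nu_p(cx_0+d)}\Z_p$ --- to each ball $t\mu^i(1+p^k\Z_p) = t\mu^i + p^{k+\nu_p(t)}\Z_p$. With $ad-bc = \lambda_2-\lambda_1$ and the identity $\nu_p(1+t\mu^i) = \nu_p(c_1/c_2+\mu^i)-\nu_p(c_1/c_2)$, the radius exponent simplifies to $\nu_p(c_1/c_2)+\nu_p(\lambda_2-\lambda_1)+k-2\nu_p(c_1/c_2+\mu^i)$, matching \eqref{result sec, Kepler set in compact case}. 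Pairwise disjointness of the $l$ output balls is automatic from injectivity of $\phi$ and disjointness of the input cosets.

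For parts (2) and (3), $-1$ lies inside exactly one coset $t\mu^s(1+p^k\Z_p)$, with $s$ as in the statement. The cosets $i\neq s$ (absent in part (2), the remaining $l-1$ in part (3)) are treated as in part (1); the simplification $\nu_p(c_1/c_2+\mu^i)=0$ (valid since $\mu^i\not\equiv\mu^s\equiv -c_1/c_2\pmod{p}$) reduces the radius exponent to the uniform $k+\nu_p(\lambda_2-\lambda_1)$. For the bad coset I would use the substitution $z=-1+w$, yielding the clean identity
\begin{equation*}
\phi(-1+w) = \lambda_2 + \frac{\lambda_1-\lambda_2}{w}.
\end{equation*}
Because $-1\in t\mu^s(1+p^k\Z_p)$, the shifted values $\{u_n+1:n\equiv s\bmod l\}$ are dense in the punctured ball $p^{k+\nu_p(t)}\Z_p\setminus\{0\}$; consequently $(\lambda_1-\lambda_2)/(u_n+1)$ is dense in the complement of $p^{1-k-\nu_p(t)+\nu_p(\lambda_2-\lambda_1)}\Z_p$, and adding $\lambda_2$ (which lies inside this ball and is therefore absorbed) produces a complement-of-ball piece. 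A direct check, using $\nu_p(t)=0$ (forced by $-c_1/c_2\in\Z_p^*$) together with $\nu_p(\lambda_2-\lambda_1)=\nu_p(\lambda_2)+k$ when $l=1$ versus $\nu_p(\lambda_2-\lambda_1)=\nu_p(\lambda_2)$ when $l\ge 2$ (since $\mu-1$ has valuation $k$ in the former case and $0$ in the latter), then recovers the two stated formulas.

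The main obstacle will be the unbounded regime of parts (2)--(3): once $-1$ belongs to the closure of $\{u_n\}$, the image $\phi(u_n)$ escapes to infinity along a subsequence, so $\phi(\overline{\{u_n\}})$ is no longer defined inside $\Q_p$ and one cannot argue by naive continuity. The substitution $z=-1+w$ is designed precisely to bypass this, trading density near the pole for density at infinity and reducing the problem to describing the closure of a dense subset of $\{y\in\Q_p:\nu_p(y)\le M\}$, which is simply the complement of a ball.
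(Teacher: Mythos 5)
Your strategy for the split case ($\lambda_1,\lambda_2\in\Q_p$) is essentially the paper's proof: the paper likewise realizes $a_{n+1}/a_n$ as a Möbius transformation $T(z)=\lambda_2-\frac{c(\lambda_2-\lambda_1)}{c+z}$ (with $c=c_1/c_2$) applied to $\overline{\{(\lambda_2/\lambda_1)^n\}}=\bigsqcup_{i}\left((\lambda_2/\lambda_1)^i+p^k\Z_p\right)$, computes the image coset by coset via the ball-image formula, and handles the coset containing the pole by the same $1/w$ inversion; your exponent bookkeeping in part (1) checks out. One slip: in part (2) the excluded ball you produce is $p^{1+\nu_p(\lambda_2)}\Z_p$, and $\lambda_2$ does \emph{not} lie in it (its valuation is $\nu_p(\lambda_2)$), so it is not ``absorbed''; rather the translation by $\lambda_2$ recenters the excluded ball at $\lambda_2$, and one then uses $\nu_p(\lambda_1-\lambda_2)=\nu_p(\lambda_2)+k\ge\nu_p(\lambda_2)+1$ to rewrite the center as $(\lambda_1+\lambda_2)/2$. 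The absorption you describe is what happens only in part (3), where the excluded ball $p^{\nu_p(\lambda_2)+1-k}\Z_p$ does contain $\lambda_2$.

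The genuine gap is the unramified extension case, which you dispose of as ``the same \ldots with $\Z_p$ replaced by the corresponding integer ring.'' It is not: $1+p^k\Z_p[\sqrt{d}]$ is isomorphic to $\Z_p^2$, hence not pro-cyclic, and the closed subgroup generated by $(\lambda_2/\lambda_1)^l$ is only the norm-one part $U_k^0(\sqrt{d})$ of that ball --- a thin (one-parameter) subset, not the full ball. So the input sets are $W_i=(\lambda_2/\lambda_1)^i U_k^0(\sqrt{d})$, to which the ball-image formula does not apply; it only yields the inclusion $\phi(W_i)\subseteq\phi(D_i)\cap\Q_p$, where $D_i$ is the full ball. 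The paper closes this by proving the inclusion is an equality: for $x\in\phi(D_i)\cap\Q_p$ the preimage is $y=-c(\lambda_1-x)/(\lambda_2-x)$, and since $x\in\Q_p$ the elements $\lambda_1-x$ and $\lambda_2-x$ are Galois conjugates, whence $N(y)=N(c)=1$ and $y\in W_i$. Without this (or an equivalent surjectivity argument onto the $\Q_p$-rational points of the image ball), your computation only bounds $K$ from above in the non-split case; the same issue recurs in the pole coset of parts (2) and (3).
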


\begin{remark}\emph{
When the roots $\lambda_1$ and $\lambda_2$ are in $\Q_p(\sqrt{d})-\Q_p$, the coefficients $c_1$ and $c_2$ are conjugate over $\Q_p$ and therefore $|c_1/c_2|_p=1$. In this case, the first part of Theorem~\ref{theorem for p-adic Kepler set of order 2 when roots are in Q_p} takes a somewhat simpler form as we may omit $\nu_p(c_1/c_2)$ in the right-hand side on \eqref{result sec, Kepler set in compact case}.}
\end{remark}

\begin{remark}\emph{The balls in the union on the right-hand side of \eqref{result sec, Kepler set in compact case} may be of different sizes. For example, consider the sequence $a_n=4+ 7^n$ over $\Q_5$. By Theorem \ref{theorem for p-adic Kepler set of order 2 when roots are in Q_p}.$(1)$
$$K= \bigsqcup_{i=0}^3 B_i,$$
where 
$$B_0=1/5+ \Z_5,\qquad B_1= 23 + 5^2\Z_5,\qquad B_2= 24 + 5^2\Z_5,\qquad B_3= 15 + 5^2\Z_5.$$
The radius of $B_0$ is 1, while that of the others is $1/5^2$.}
\end{remark}

\begin{thm}\label{theorem for p-adic Kepler set2 of order 2 when roots are in Q_p}
Let $(a_n)_{n=0}^\infty$ be a second-order linear recurrence sequence over $\Q_p$, given by $a_n= c_1\lambda_1^n + c_2\lambda_2^n$, for a prime $p\geq 3$ and some non-zero $c_1,c_2, \lambda_1,\lambda_2,$ where $\lambda_1,\lambda_2\in \Q_p(\sqrt{d})-\Q_p$ and the extension $\Q_p(\sqrt{d})$ is ramified. Suppose that $|\lambda_1|_p= |\lambda_2|_p>0$ and 
that $\lambda_2/\lambda_1$ is not a root of unity. Let $l$ be the order of $\lambda_2/\lambda_1$ modulo $\pi$, and let $k$ be such that $(\lambda_2/\lambda_1)^l= 1 + \pi^k \mu$ with $\pi\nmid \mu$.

\begin{enumerate}
\item If $-c_1/c_2 \notin \overline{\{(\lambda_2/\lambda_1)^n\,| \, n\in \N\}}$, then the Kepler set of $(a_n)_{n=0}^\infty$ is 
\begin{equation}\label{result sec, Kepler set in compact case2}
    \begin{split}
        K&= \bigsqcup_{i=0}^{l-1}\left(\frac{a_{i+1}}{a_{i}} + p^{\lceil\nu_p(\lambda_2-\lambda_1)-2\nu_p(c_1/c_2+(\lambda_2/\lambda_1)^i)+k/2\rceil}\Z_p\right).
    \end{split}
\end{equation}
\item If $-c_1/c_2 \in \overline{\{(\lambda_2/\lambda_1)^n\, |\, n\in \N\}}$ and $l=1$, then the Kepler set of $(a_n)_{n=0}^\infty$ is
$$K= \left(\frac{\lambda_1+ \lambda_2}{2} + p^{\lceil\nu_p(\lambda_2)\rceil +1} \Z_p\right)^C.$$
\item If $-c_1/c_2 \in \overline{\{(\lambda_2/\lambda_1)^n\, |\, n\in \N\}}$, $l\geq 2$, and $s\in\{0,1,\ldots,l-1\}$ is defined by the condition
$$(\lambda_2/\lambda_1)^s\equiv  -c_1/c_2\,(\textup{mod} \ \pi^{k}),$$
then the Kepler set of $(a_n)_{n=0}^\infty$ is 
\begin{equation}\label{result, equation2 for third case of theorem}
   \begin{split}
    K= \bigsqcup_{i=0, i\neq s}^{l-1} \left(\frac{a_{i+1}}{a_{i}} + p^{\lceil\nu_p(\lambda_2-\lambda_1)+(k/2)\rceil}\Z_p\right) \bigsqcup \left( p^{\lceil \nu_p(\lambda_2)+1-(k/2)\rceil}\Z_p^C\right).
\end{split}
\end{equation}
\end{enumerate}
\end{thm}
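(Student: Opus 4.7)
The plan parallels that of Theorem \ref{theorem for p-adic Kepler set of order 2 when roots are in Q_p}, with modifications to handle the fact that in the ramified case $\nu_p$ takes half-integer values on $\Q_p(\sqrt{d})^*$. Setting $\rho = \lambda_2/\lambda_1$, I would first rewrite the consecutive ratios as
$$\frac{a_{n+1}}{a_n} = \phi(\rho^n), \qquad \phi(x) := \lambda_1 + \frac{c_2(\lambda_2 - \lambda_1)x}{c_1 + c_2 x},$$
where $\phi$ is a Möbius transformation on $\Q_p(\sqrt{d}) \cup \{\infty\}$. Hence $K = \overline{\phi(\{\rho^n\})}$, which equals $\phi(H)$ wherever $\phi$ is continuous, where $H := \overline{\{\rho^n : n \in \N\}}$ is a monothetic subgroup of the units of $\Q_p(\sqrt{d})$.

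Using the monothetic subgroup characterization for the ramified unit group (alluded to in the Main Results section), the hypotheses that $\rho$ has order $l$ modulo $\pi$ and that $\rho^l = 1 + \pi^k \mu$ with $\pi \nmid \mu$ give
$$H = \bigsqcup_{i=0}^{l-1} \rho^i\bigl(1 + \pi^k \O\bigr),$$
where $\O$ denotes the ring of integers of $\Q_p(\sqrt{d})$. I would then compute $\phi$ on each coset via the identity
$$\phi\bigl(\rho^i(1 + \pi^k u)\bigr) - \phi(\rho^i) = \frac{c_1 c_2(\lambda_2 - \lambda_1)\rho^i \pi^k u}{(c_1 + c_2\rho^i)\bigl(c_1 + c_2\rho^i(1 + \pi^k u)\bigr)}, \qquad u \in \O.$$

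In case (1), the assumption $-c_1/c_2 \notin H$ forces $\nu_\pi(c_1 + c_2\rho^i) < k$ for every $i$, so the two factors in the denominator have equal $\pi$-adic valuation, and $\phi$ maps each coset of $H$ surjectively onto a ball in $\Q_p(\sqrt{d})$ around $a_{i+1}/a_i$ whose $\pi$-adic radius is determined by $k$, $\nu_\pi(\lambda_2 - \lambda_1)$, and $\nu_\pi(c_1/c_2 + \rho^i)$. Translating from $\nu_\pi$ to $\nu_p$ and intersecting with $\Q_p$ — since $a_{n+1}/a_n \in \Q_p$, while the computation above takes place in $\Q_p(\sqrt{d})$ — converts the (possibly half-integer) exponent into its ceiling, producing \eqref{result sec, Kepler set in compact case2}. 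Disjointness of the resulting balls follows from distinctness of the centers modulo $\pi$. In cases (2) and (3), the pole $-c_1/c_2$ lies in some coset $\rho^s(1 + \pi^k \O) \subset H$; excluding that coset still gives balls as in case (1), while on the pole coset $\phi$ is a bijection onto the complement of a small ball whose radius is controlled by $\nu_p(\lambda_2) + 1 - k/2$. Intersecting with $\Q_p$ again rounds this exponent upward via the ceiling, producing the $(\cdots)^C$ summand of \eqref{result, equation2 for third case of theorem} and the formula in case (2).

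The main obstacle will be the careful bookkeeping of $\pi$-adic valuations in the Möbius computation, together with the verification that the intersection of a ball of radius $p^{-m/2}$ in $\Q_p(\sqrt{d})$ with $\Q_p$ equals the $\Q_p$-ball of radius $p^{-\lceil m/2 \rceil}$. This is the source of every ceiling in the final formulas, and is the genuinely new feature distinguishing this theorem from its unramified counterpart.
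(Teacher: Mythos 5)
Your overall strategy --- conjugating the ratio map to a M\"obius transformation, applying it to the closure $H$ of $\{(\lambda_2/\lambda_1)^n\}$, and recovering the ceilings from the intersection of $\Q_p(\sqrt{d})$-balls with $\Q_p$ --- is exactly the paper's, and your M\"obius bookkeeping (the difference identity, the equal valuations of the two denominator factors away from the pole, the observation that a ball of radius $p^{-m/2}$ centered in $\Q_p$ meets $\Q_p$ in a ball of radius $p^{-\lceil m/2\rceil}$) is sound. But there is one genuine gap: your description of $H$. You take $H=\bigsqcup_{i=0}^{l-1}(\lambda_2/\lambda_1)^i\bigl(1+\pi^k\Z_p[\sqrt{d}]\bigr)$, and this is strictly too large. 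Since $\lambda_1$ and $\lambda_2$ are conjugate over $\Q_p$, we have $N(\lambda_2/\lambda_1)=1$, and the norm is continuous, so $H$ lies inside the norm-one units $U^{0}(\sqrt{d})$; equivalently, $1+\pi^k\Z_p[\sqrt{d}]$ is a two-dimensional $p$-adic analytic group, while the closed subgroup generated by a single element can only fill a one-dimensional piece of it. The correct statement, which is Theorem \ref{prop, orbit of 1 in extension2, under T(z)=lambda z} (the result you cite but misquote), is $H=\bigsqcup_{i}\bigl((\lambda_2/\lambda_1)^i+\pi^k\Z_p[\sqrt{d}]\bigr)\cap U^{0}(\sqrt{d})$.

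This is not a cosmetic slip: it removes the surjectivity half of your argument. With the correct $H$, the image $T(W_i)$ of each piece $W_i=D_i\cap U^{0}(\sqrt{d})$ is a priori only \emph{contained} in $T(D_i)\cap\Q_p$, and the substantive step is to upgrade this containment to an equality. The paper does so by solving $T(y)=x$ for a given $x\in T(D_i)\cap\Q_p$, obtaining $y=-c(\lambda_1-x)/(\lambda_2-x)$ with $c=c_1/c_2$, and noting that $N(y)=N(c)\cdot N\bigl((\lambda_1-x)/(\lambda_2-x)\bigr)=1$ because $x\in\Q_p$ forces $\lambda_1-x$ and $\lambda_2-x$ to be conjugate; hence $y$ lies in $W_i$ and not merely in $D_i$. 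Your proposal, by working with the inflated $H$, gets surjectivity onto the full ball for free and never confronts this point; once $H$ is corrected, the norm computation (or some substitute for it) is indispensable, both for the balls in case (1) and for the complement of a ball arising from the pole coset in cases (2) and (3). Everything else in your outline matches the paper's route.
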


\begin{remark}\label{result, lemma on k odd} \emph{
In the setup of Theorem \ref{theorem for p-adic Kepler set2 of order 2 when roots are in Q_p}, let $\lambda_1= a+ b\sqrt{d}$ and $\lambda_2=a-b \sqrt{d}$, with $a,b\in \Q_p$. If $\nu_p(a)\leq\nu_p(b)$, then
$$\frac{\lambda_2}{\lambda_1}-1= \frac{a-b\sqrt{d}}{a+b\sqrt{d}}-1= \frac{-2b\sqrt{d}/a}{1+ b\sqrt{d}/a}= \pi^{k_1}\mu_1,\qquad \left(\pi\nmid \mu_1,\, \mu_1\in \Q_p(\sqrt{d})\right),$$
where $k_1$ is a positive odd integer.
If $\nu_p(a)>\nu_p(b)$, then
$$\left(\frac{\lambda_2}{\lambda_1}\right)^2-1= \left(\frac{a-b\sqrt{d}}{a+b\sqrt{d}}\right)^2-1= \frac{-4a/(b\sqrt{d})}{(1+ a/(b\sqrt{d}))^2}= \pi^{k_2}\mu_2,\qquad \left(\pi\nmid \mu_2,\, \mu_2\in \Q_p(\sqrt{d})\right),$$
where $k_2$ is a positive odd integer.
Hence, $l$ is at most 2, and $k$ is a positive odd integer. In \eqref{result, equation2 for third case of theorem}, the first union actually consists of one set.
}
\end{remark}

Let $a, b$ be non-zero integers. A \emph{Lucas sequence of the first kind} is a sequence $(L_n)_{n=0}^\infty = L(a,b)$, defined by the initial conditions $L_0=0$, $L_1=1$, and the recurrence
$$L_n = a L_{n-1}+ b L_{n-2}, \qquad n\geq 2$$
for some integers $a, b$ (see \cite[p.2]{Ribenboim2}). Note that the Fibonacci sequence is a special case where $a=b=1$.
The roots of the characteristic polynomial of $L(a,b)$ are
$$\lambda_1= \frac{a+ \sqrt{D}}{2}, \qquad \lambda_2 =\frac{a- \sqrt{D}}{2},$$
where $D= a^2 +4b$ is the discriminant. By Binet's formula we have
$$L_n= \frac{\lambda_1^n -\lambda_2^n}{\lambda_1 - \lambda_2}, \qquad n=0,1,2,\ldots.$$
If the ratio $\lambda_2/\lambda_1$ is a root of unity, the sequence $L(a,b)$ is \emph{degenerate}. It is known that, if 
$$(a,b) = (\pm 2,1), (\pm 2,-1), (0,\pm1), (\pm 1, 0),$$
then the sequence $L(a,b)$ is degenerate (see \cite[p.5-6]{Ribenboim2}, \cite{Sanna4}).
For any given prime $p$, the \emph{rank of appearance} \cite[p.10]{Ribenboim2} of $p$ in the Lucas sequence $(L_n)_{n=0}^\infty$ is the smallest positive integer $n$ such that $p\mid L_n$ ($\infty$ if $p\nmid L_n$ for every $n>0$). Denote the rank of appearance by $\alpha_{L}(p)$.

For every prime $p$, the rank of appearance of $p$ in the Fibonacci sequence $(F_n)_{n=0}^\infty$ satisfies $\alpha_{F}(p)\le p+1$ (see \cite{Paul}). A prime $p$ is a \emph{Wall-Sun-Sun prime} if $p^2 | F_{\alpha_{F}(p)}$ (see \cite[Section 5]{Pomerance}). The definition may be vacuous as there are no Wall-Sun-Sun primes at least up to $2^{64}$ (see \cite{Lenny}). However, there is a heuristic argument suggesting that there are, in fact, infinitely many such primes \cite[Section 4]{Richard}.

\begin{thm}\label{result, proposition on Kepler set equal to Qp}
Let $L(a,b)$ be a Lucas sequence of the first kind and $p\geq 3$ be a prime. Then:
\begin{enumerate}
\item If $\lambda_1,\lambda_2 \in \Q_p$, then $K((L_n))\neq \Q_p$.
\item If $\Q_p(\sqrt{D})$ is an unramified extension of $\Q_p$, $\alpha_{L}(p)= p+1$, and $\nu_p\left((\lambda_2/\lambda_1)^{\alpha_{L}(p)}-1\right)=~1$, then the Kepler set of $L(a,b)$ is the whole $p$-adic field: 
$$K((L_n))=\Q_p.$$
In particular, if $p$ is not a Wall-Sun-Sun prime and $\alpha_{F}(p) =p+1$, then
$$K((F_n))=\Q_p.$$
\item If $\Q_p(\sqrt{D})$ is a ramified extension of $\Q_p$ with $\nu_p(\lambda_1)=1/2$, and $(\lambda_2/\lambda_1)^l= 1 +\pi\mu$, $\pi\nmid \mu$, where $l\geq 2$ is the order of $\lambda_2/\lambda_1$ modulo $\pi$, then 
$$K((L_n))=\Q_p.$$
\end{enumerate}
\end{thm}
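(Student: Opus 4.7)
The plan is to invoke Theorems~\ref{theorem, when single root}, \ref{theorem for p-adic Kepler set of order 2 when roots are in Q_p}, and \ref{theorem for p-adic Kepler set2 of order 2 when roots are in Q_p} on Binet's formula $L_n = (\lambda_1^n - \lambda_2^n)/(\lambda_1-\lambda_2)$, in which the coefficients of \eqref{result, second order equation general form2} satisfy $c_1 = -c_2$, so $-c_1/c_2 = 1 = (\lambda_2/\lambda_1)^0$. This identity automatically places every non-degenerate Lucas sequence into case~(2) or case~(3) of the applicable theorem, so the Kepler set is always given by an explicit disjoint union, and each part of the statement reduces to inspecting that union.

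For part~(1), I would distinguish three subcases. If $\lambda_1 = \lambda_2$, Theorem~\ref{theorem, when single root} applies with $c_1 = 0$, giving $K = \lambda(1+p\Z_p)^C$, a proper subset of $\Q_p$. If $|\lambda_1|_p \neq |\lambda_2|_p$, \eqref{result sec, equation of ratios} shows $a_{n+1}/a_n$ converges, so $K$ is countable and hence cannot equal the uncountable $\Q_p$. In the balanced subcase $|\lambda_1|_p = |\lambda_2|_p$ I apply Theorem~\ref{theorem for p-adic Kepler set of order 2 when roots are in Q_p}: case~(2) yields $K$ as the complement of a ball, and in case~(3) the key observation is that $\rho = \lambda_2/\lambda_1 \in \Q_p^*$ has order $l$ modulo $p$ dividing $|{\bf F}_p^*| = p-1$, so $l - 1 \leq p - 2$. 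A short Haar-measure comparison, using $\nu_p(\lambda_2 - \lambda_1) \geq \nu_p(\lambda_2)$ (ultrametric) and $k \geq 1$, shows that the $l-1$ small balls in the first summand of \eqref{result, equation for third case of theorem} have total measure strictly less than the ball $p^{\nu_p(\lambda_2)+1-k}\Z_p$ whose complement is the second summand, so they cannot tile it, and a point of that ball is missing from $K$.

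For part~(2), the identity $\nu_p(L_n) = \nu_p(1-\rho^n) - \nu_p(1-\rho)$ (with $\rho = \lambda_2/\lambda_1$ a unit, since $\alpha_L(p) = p+1 \geq 4$ forces $p \nmid b$) together with the hypotheses places us in case~(3) of Theorem~\ref{theorem for p-adic Kepler set of order 2 when roots are in Q_p} with $l = p+1$, $s = 0$, $k = 1$, and $\nu_p(\lambda_2-\lambda_1) = \nu_p(\lambda_2) = 0$, so that \eqref{result, equation for third case of theorem} reduces to
$$K = \bigsqcup_{i=1}^{p}\bigl(L_{i+1}/L_i + p\Z_p\bigr) \ \sqcup \ \Z_p^C.$$
The main obstacle is showing that the $p$ residues $L_{i+1}/L_i \bmod p$ exhaust ${\bf F}_p$. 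I would write $L_{i+1}/L_i = \phi(\rho^i)$ for the Möbius transformation $\phi(x) = \lambda_1(1-\rho x)/(1-x)$, invertible since $\lambda_1(1-\rho) \neq 0$; after reducing modulo $p$, $\bar\rho$ generates the norm-one subgroup of ${\bf F}_{p^2}^*$, a cyclic group of order $p+1$, so $\bar\rho^0,\ldots,\bar\rho^p$ are $p+1$ distinct elements. Since $\bar\phi$ is a bijection of $\P^1({\bf F}_{p^2})$ sending $\bar\rho^0 = 1$ to $\infty$, the remaining $p$ images are distinct elements of ${\bf F}_{p^2}$; rationality of each $L_{i+1}/L_i$ forces them into ${\bf F}_p$, so they must fill all of ${\bf F}_p$. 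Hence the $p$ balls tile $\Z_p$ and $K = \Q_p$. For the Fibonacci corollary, $\alpha_F(p) = p+1$ excludes $p = 5$ (where $\alpha_F(5)=5$) and forces $5$ to be a non-square modulo $p$, so $\Q_p(\sqrt 5)$ is unramified; the non-Wall--Sun--Sun condition $p^2 \nmid F_{p+1}$ translates via the same identity into exactly $\nu_p(\rho^{p+1}-1) = 1$.

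For part~(3), Remark~\ref{result, lemma on k odd} forces $l \leq 2$ and $k$ odd, so the hypotheses pin down $l = 2$, $k = 1$, and case~(3) of Theorem~\ref{theorem for p-adic Kepler set2 of order 2 when roots are in Q_p} applies with $s = 0$. A short valuation check, using $\nu_p(\lambda_1) = \nu_p(\lambda_2) = 1/2$ and $\lambda_1 + \lambda_2 = a \in \Q_p$ to deduce $\nu_p(a) \geq 1$ and $\nu_p(\sqrt D) = 1/2$, makes both ceilings in \eqref{result, equation2 for third case of theorem} equal to $1$, so
$$K = \bigl(L_2/L_1 + p\Z_p\bigr) \ \sqcup \ (p\Z_p)^C.$$
Since $L_2/L_1 = a \in p\Z_p$, the first set equals $p\Z_p$ and hence $K = \Q_p$.
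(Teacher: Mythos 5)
Your proposal is correct and follows the same overall route as the paper: Binet's formula gives $c_1=-c_2$, so $-c_1/c_2=1$ always lies in $\overline{\{(\lambda_2/\lambda_1)^n\}}$ with $s=0$, and everything reduces to reading off case (2) or (3) of Theorem~\ref{theorem for p-adic Kepler set of order 2 when roots are in Q_p} or Theorem~\ref{theorem for p-adic Kepler set2 of order 2 when roots are in Q_p}. The differences are in two sub-steps. For part (1), the paper dismisses the claim in one line (``the order of $\lambda_2/\lambda_1$ modulo $p$ is at most $p-1$, so the result trivially follows''); your case split and the Haar-measure comparison showing that $l-1\le p-2$ balls of radius at most $p^{-\nu_p(\lambda_2)-k}$ cannot cover $p^{\nu_p(\lambda_2)+1-k}\Z_p$ is exactly the missing justification, so your version is the more complete one. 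For part (2), the paper does not re-derive the distinctness of the residues $L_{i+1}/L_i \bmod p$: the union in \eqref{result, equation for third case of theorem} is already asserted to be disjoint (this comes from the injectivity of the M\"obius map $T$ on the disjoint sets $W_i$ in the proof of Theorem~\ref{theorem for p-adic Kepler set of order 2 when roots are in Q_p}), and $p$ pairwise disjoint balls of measure $1/p$ contained in $\Z_p$ necessarily tile it. Your argument --- reducing the M\"obius map modulo $p$ and using that $\bar\rho$ generates the order-$(p+1)$ norm-one subgroup of $\mathbb{F}_{p^2}^*$ --- reproves that disjointness from scratch; it is valid and self-contained, but redundant given the $\bigsqcup$ in the theorem you are invoking. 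Part (3) matches the paper essentially verbatim, including the observations that $L_2=a\in p\Z_p$ and that the set $p\,\Z_p^C$ appearing in \eqref{result, equation2 for third case of theorem} coincides with $(p\Z_p)^C$.
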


\begin{thm}\label{fibanacci theorem2}
Let $L(a,1)$ be a Lucas sequence of the first kind, let $p\geq 3$ be a prime, and $\alpha_L(p)>2$. Let $\lambda_1$ and $\lambda_2$ be the roots of the characteristic polynomial of $L(a,1)$. If~$p^2\mid D$ or $(\lambda_2/\lambda_1)^{l} -1$ is not divisible by $\pi^2$, where $l$ is the order of $\lambda_2/\lambda_1$ modulo $\pi$ ($\pi=p$ if $\Q_p(\sqrt{D})$ is an unramifed extension of $\Q_p$), then
$$\left(\overline{\left\{\frac{L_{n+\alpha_{L}(p)}}{L_n}\right\}_{n=0}^\infty}\right)\bigcup \left(\overline{\left\{\frac{L_{n+\alpha_{L}(p)-2}}{L_n}\right\}_{n=0}^\infty}\right)= \Q_p.$$
In particular, if $p$ is not a Wall-Sun-Sun prime, then
$$\left(\overline{\left\{\frac{F_{n+\alpha_{F}(p)}}{F_n}\right\}_{n=0}^\infty}\right)\bigcup \left(\overline{\left\{\frac{F_{n+\alpha_{F}(p)-2}}{F_n}\right\}_{n=0}^\infty}\right)= \Q_p.$$
\end{thm}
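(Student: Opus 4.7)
The starting point is the classical Lucas identity
$$L_{n+k}=L_{n+1}L_k+L_nL_{k-1},\qquad n,k\ge 0,$$
valid since $b=1$; it is proved by induction on $k$ using the recurrence. Dividing by $L_n$ (non-zero for $n\ge 1$ in the non-degenerate case) gives $L_{n+k}/L_n=L_kr_n+L_{k-1}$ with $r_n:=L_{n+1}/L_n$. Since $x\mapsto L_mx+L_{m-1}$ is a homeomorphism of $\Q_p$,
$$A_m:=\overline{\{L_{n+m}/L_n:n\ge 1\}}=L_mK+L_{m-1},$$
where $K=K((L_n))$. The goal becomes $A_\alpha\cup A_{\alpha-2}=\Q_p$. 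For $(L_n)$ we have $c_1=1/(\lambda_1-\lambda_2)=-c_2$, so $-c_1/c_2=1\in\overline{\{(\lambda_2/\lambda_1)^n\}}$; moreover $\nu_p(\lambda_1)=\nu_p(\lambda_2)=0$ since $p\nmid a$. Hence case (2) or (3) of Theorem \ref{theorem for p-adic Kepler set of order 2 when roots are in Q_p} or \ref{theorem for p-adic Kepler set2 of order 2 when roots are in Q_p} applies.

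\emph{Case A:} $l=1$. A direct calculation gives $t-1=\sqrt{D}(a-\sqrt{D})/2$, hence $\nu_\pi(t-1)=\nu_\pi(\sqrt{D})>0$, forcing $p\mid D$; by Remark \ref{result, lemma on k odd}, the ramified case for $L(a,1)$ always falls here. Part (2) of the relevant Kepler-set theorem gives $K=(a/2+p\Z_p)^C$. The LTE lemma then yields $\nu_p(L_n)=\nu_p(n)$, so $\alpha=p$, $\nu_p(L_\alpha)=1$, $\nu_p(L_{\alpha-2})=0$. Therefore
$$A_m=\Q_p\setminus\bigl(c_m+L_mp\Z_p\bigr),\qquad c_m:=aL_m/2+L_{m-1},$$
making $A_\alpha^C,A_{\alpha-2}^C$ balls of radii $p^{-2},p^{-1}$ respectively. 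Using $L_\alpha-L_{\alpha-2}=aL_{\alpha-1}$ and $L_{\alpha-1}-L_{\alpha-3}=aL_{\alpha-2}$, I compute $c_\alpha-c_{\alpha-2}=a(L_\alpha+L_{\alpha-2})/2$, which has $\nu_p=0$ (dominated by $L_{\alpha-2}$). Thus $|c_\alpha-c_{\alpha-2}|_p=1$ exceeds both radii, the two excluded balls are disjoint, and $A_\alpha\cup A_{\alpha-2}=\Q_p$.

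\emph{Case B:} $l\ge 2$. Here $p\nmid D$, so $\Q_p(\sqrt{D})$ is unramified (or trivial) and $p^2\nmid D$; the hypothesis thus forces $k=1$. Since $\alpha=l>2$, in fact $l\ge 3$. Theorem \ref{theorem for p-adic Kepler set of order 2 when roots are in Q_p}.(3) with $s=0$ yields
$$K=\bigsqcup_{i=1}^{l-1}(r_i+p\Z_p)\;\sqcup\;\Z_p^C,$$
and $\nu_p(L_\alpha)=1$. The outer component of $A_\alpha$ computes to $L_\alpha\Z_p^C+L_{\alpha-1}=\Q_p\setminus(L_{\alpha-1}+p\Z_p)$. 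By the Lucas identity with $n=1,\,k=\alpha-2$ and since $|L_{\alpha-2}|_p=1$, the $i=1$ inner ball of $A_{\alpha-2}$ is
$$L_{\alpha-2}r_1+L_{\alpha-3}+L_{\alpha-2}p\Z_p=L_{\alpha-1}+p\Z_p,$$
which precisely fills the gap, giving $A_\alpha\cup A_{\alpha-2}=\Q_p$. The main obstacle is carefully tracking $p$-adic valuations across the three possible field settings (trivial extension, unramified, ramified), verifying that the hypothesis $p^2\mid D$ or $k=1$ feeds into the correct branch ($l=1$ vs $l\ge 2$), and checking via the Lucas identity that the relevant ball centers simplify to $L_{i+m}/L_i$.
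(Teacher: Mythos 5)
Your proof is correct, and it takes a genuinely different route from the paper's. The paper recomputes each set $K_m=\overline{\{L_{n+m}/L_n\}}$ from scratch as the image of the orbit closure of $(\lambda_2/\lambda_1)^n$ under the M\"obius map $T_m(z)=(\lambda_1^m-\lambda_2^mz)/(1-z)$, running through six sub-cases (roots in $\Q_p$ / unramified / ramified, each split by $p\nmid D$, $p\pdiv D$, $p^2\mid D$) and re-verifying in each that $T_m(W_i)=T_m(D_i)\cap\Q_p$. Your key observation is the addition formula $L_{n+m}=L_mL_{n+1}+L_{m-1}L_n$, which exhibits every $A_m$ as the affine image $L_mK+L_{m-1}$ of the single Kepler set $K=K((L_n))$; you then read $K$ off from Theorems \ref{theorem for p-adic Kepler set of order 2 when roots are in Q_p} and \ref{theorem for p-adic Kepler set2 of order 2 when roots are in Q_p} as black boxes (always in case (2) or (3), since $-c_1/c_2=1$), and the whole argument collapses to two cases, $l=1$ versus $l\geq 2$. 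I checked the computations: $c_m=aL_m/2+L_{m-1}=(\lambda_1^m+\lambda_2^m)/2$ agrees with the paper's ball centers; in case A the two excluded balls have radii $p^{-2},p^{-1}$ and centers at distance $|a(L_\alpha+L_{\alpha-2})/2|_p=1$, hence are disjoint (the paper's argument via $a^2\equiv 4$ vs.\ $a^2\equiv-4$ is the same fact in disguise); in case B the $i=1$ ball of $A_{\alpha-2}$ equals $L_{\alpha-1}+p\Z_p$, exactly the complement of the outer component of $A_\alpha$ (consistent with the paper's $A=(\lambda_1^{\alpha}+\lambda_2^{\alpha})/2\equiv L_{\alpha-1}\ (\mathrm{mod}\ p)$). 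Two small points to tighten: the correct reason that $\nu_p(\lambda_1)=\nu_p(\lambda_2)=0$ is $\lambda_1\lambda_2=-1$ together with equality of the two valuations, not ``$p\nmid a$'' (which can fail, though only when $p\nmid D$); and the appeal to lifting-the-exponent for $\nu_p(L_n)=\nu_p(n)$ when $p\mid D$ needs a citation or a word of justification in the ramified case --- there $\nu_p(\lambda_1-\lambda_2)=1/2$, but $p\mid a^2+4$ forces $p\equiv 1\ (\mathrm{mod}\ 4)$, so $p\geq 5$ and $1/2>1/(p-1)$; the paper instead cites Sanna's Lemma 2.1(v) for $\alpha_L(p)=p$ and $\nu_p(L_{\alpha_L(p)})=1$. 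Your approach buys a substantially shorter and more unified proof at the cost of using the special identity available for Lucas sequences with $b=1$.
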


\begin{remark}\label{Remark K1 and K2}\emph{It follows from the proof of Theorem \ref{fibanacci theorem2} that if $p^2\mid D$, then
$$\left(\overline{\left\{\frac{L_{n+1}}{L_n}\right\}_{n=0}^\infty}\right)\bigcup \left(\overline{\left\{\frac{L_{n+2}}{L_n}\right\}_{n=0}^\infty}\right)= \Q_p.$$
}
\end{remark}

\begin{remark}\emph{
One should compare the preceding two theorems with the results of Garcia et al. \cite{GarciaLucaSanna}, mentioned in the introduction. They find, under mild conditions, that the full ratio set is dense in $\Q_p$. We show that, under somewhat stronger conditions, already the set of ratios of consecutive (Theorem \ref{result, proposition on Kepler set equal to Qp}) or some sets of ``almost consecutive'' ratios (Theorem \ref{fibanacci theorem2} and Remark \ref{Remark K1 and K2}) are dense.}
\end{remark}

How large is the set of primes for which the Kepler set of the Fibonacci sequence is $\Q_p$, that is, primes for which
$\alpha_{F}(p)=p+1$? (We ignore the condition that $p$ is not a Wall-Sun-Sun prime.) On the one hand, even the question of whether it is infinite is open \cite{Paul} (unless we assume the generalized Riemann Hypothesis; see below). On the other hand, numerical data strongly suggest that it is a large set of primes. Indeed, the sequence is one of the very first in the OEIS \cite[A000057]{Sloane}. According to the comments there, the number of such primes up to $10^k$ has been calculated by Greathouse for $1\leq k\leq 10$. In Table \ref{table Fibonacci zero}, we present the number of primes up to $10^k$, the number of those that meet the condition $\alpha_{F}(p)=p+1$, and the ratio between the two quantities for $1\leq k\leq 10$. The relative density of primes that satisfy the property appears to converge to $0.1968\ldots$.
\begin{remark}\emph{It follows from a very recent result of Sanna \cite{Sanna3} that, under the Generalized Riemann Hypothesis, the set of primes with $\alpha_{F}(p)=p+1$ has a relative density $10/19 A= 0.1968\ldots$, where $A=\prod_{p\in \mathcal{P}}\left(1- \frac{1}{p(p-1)}\right)$ is the Artin constant (see \cite[p.303]{Ribenboim}), and $\mathcal{P}$ denotes the set of all primes. This agrees with our numerical experiments.
}
\end{remark}
\begin{table}[!htbp] 
    \centering
\begin{tabular}{|   C{1cm} | C{3cm} | C{3cm} | C{2cm} |}
 \hline
$k$ & Number of primes in $[1,10^k]$&Number of primes satisfying $\alpha_{F}(p)=p+1$& Proportion\\
 \hline
 ~1 & ~~~~~~~~~~~~~~4    &~~~~~~~~~~~3&0.75~~~~~~~~\\
 \hline
 ~2  &~~~~~~~~~~~~~25    &~~~~~~~~~~~7&0.28~~~~~~~~\\
 \hline
~3  &~~~~~~~~~~~168    &~~~~~~~~~~38&0.2261\ldots\\
 \hline
~4  & ~~~~~~~~~~1229  &~~~~~~~~249&0.2026\ldots\\\hline
~5  &~~~~~~~~~~9592&~~~~~~1894&0.1975\ldots\\\hline
~6  &~~~~~~~~78498&~~~~~15456&0.1968\ldots\\\hline
~7  &~~~~~~664579&~~~130824&0.1968\ldots\\\hline
~8  & ~~~~~5761455&11344404&0.1969\ldots\\\hline
~9  &~~~50847534&10007875&0.1968\ldots\\\hline
10  & 4550525511&89562047&0.1968\ldots\\\hline
\end{tabular}
\caption{Proportion of primes satisfying $\alpha_{F}(p)=p+1$.}
\label{table Fibonacci zero}
\end{table}
The frequency of primes with $\alpha_{F}(p)=p+1$ seems pretty fixed. There are 179055 such primes among the first $10^6$ primes. Counting them in each of the 10 sub-intervals $(j\cdot10^5,(j+1)\cdot10^5]$, $0\leq j\leq 9$, of primes, we see that in each sub-interval, there are about one-tenth of them. In fact, the minimum is 19534, attained at $(8\cdot10^5,9\cdot10^5]$ and the maximum is 19781, attained at $(4\cdot10^5,5\cdot 10^{5}]$. This raises the question of whether, in fact, the relative density of the set of primes satisfying $\alpha_{F}(p)=p+1$ becomes arbitrarily close to $10/19 A$ for every sufficiently large finite set of consecutive primes. (This is analogous to the notion of \emph{Banach density}; see, for example,~\cite[p.72]{FURSTEN}.)
%So we may strengthen the above question in the form
%\begin{question}\emph{ Is it true that the set of primes with $\alpha_{F}(p)=p+1$ has a relative Banach density? (For the notion of Banach density, see, for example,~\cite[p.72]{FURSTEN}.) and it is equal to relative density 0.1968?}
%\end{question}
% \begin{table}[!htbp] 
%     \centering
% \begin{tabular}{|   C{1cm} | C{4cm} | C{3cm} | C{2cm} |}
%  \hline
% $k$ & Number of primes in $[k\cdot 10^5,(k-1)10^5)$&Number of primes satisfying $\alpha_{F}(p)=p+1$& Proportion\\
%  \hline
%  ~1 & ~~~~~~~~~~~~~~4    &~~~~~~19670&0.75~~~~~~~~\\
%  \hline
%  ~2  &~~~~~~~~~~~~~25    &~~~~~~19721&0.28~~~~~~~~\\
%  \hline
% ~3  &~~~~~~~~~~~168    &~~~~~~19649&0.2261\ldots\\
%  \hline
% ~4  & ~~~~~~~~~~1229  &~~~~~19657&0.2026\ldots\\\hline
% ~5  &~~~~~~~~~~9592&~~~~~19781&0.1975\ldots\\\hline
% ~6  &~~~~~~~~78498&~~~~~19642&0.1968\ldots\\\hline
% ~7  &~~~~~~664579&~~~~~19691&0.1968\ldots\\\hline
% ~8  & ~~~~~5761455&~~~~~19534&0.1969\ldots\\\hline
% ~9  &~~~50847534&~~~~~19649&0.1968\ldots\\\hline
% 10  & 4550525511&~~~~~19761&0.1968\ldots\\\hline
% \end{tabular}
% \caption{Proportion of primes satisfying $\alpha_{F}(p)=p+1$.}
% \label{table for Banach density}
% \end{table}

It will be convenient to use the notation $U$ instead of $\Z_p^*$ and $U(\sqrt{d})$ instead of $\Z_p
[\sqrt{d}]^*$. Define the norm $N$ on $U(\sqrt{d})$ by 
$$N(x+ y\sqrt{d})= x^2-y^2d,\qquad x, y\in \Q_p.$$
(Actually, $N$ is defined on the whole of $\Q_p(\sqrt{d})$, but we prefer to consider only its restriction to $U(\sqrt{d})$.)
Denote:
$$U^{0}(\sqrt{d})= \{z\in U(\sqrt{d})\,|\, N(z)=\pm 1\}.$$

Our results depend on being able to calculate the closed multiplicative subgroups generated by a single element in $U$ and $U(\sqrt{d})$ (in the latter case $-$ only by elements in $U^0(\sqrt{d})$). They are very related to some of the results of Coelho and Parry \cite{Parry}, who found the number of ergodic components of maps of the form $x\mapsto \lambda x$, $x\in U$ (or $x\in U^0(\sqrt{d})$). Theorems \ref{prop, orbit closure of 1 in Qp} and \ref{prop, orbit of 1 in ramified extension, under T(z)=lambda z} can probably be inferred from their proofs.  
(For more details on ergodic components of maps of the form $x\mapsto \lambda x$, we refer to \cite{Fan1, Fan2} and the references therein.)

\begin{thm} \label{prop, orbit closure of 1 in Qp} Let $p\geq 3$ be a prime, $\lambda \in U$ be of order $l$ modulo $p$, and $k= \nu_p(\lambda^l-1)$. Then the closed subgroup of $U$ generated by $\lambda$ is 
$$\overline{\left\{\lambda^n\,|\, n\in\N\right\}}= \bigsqcup_{i=0}^{l-1} \left(\lambda^i + p^k \Z_p\right).$$
\end{thm}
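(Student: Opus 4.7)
My plan is to decompose the closure of the multiplicative subgroup generated by $\lambda$ according to the residue of the exponent modulo $l$, and to handle each residue class via the $p$-adic logarithm (or an equivalent direct binomial computation).

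First, I would observe that since $\lambda$ has order exactly $l$ modulo $p$, the powers $\lambda^{0}, \lambda^{1}, \ldots, \lambda^{l-1}$ lie in pairwise distinct residue classes modulo $p$, hence a fortiori modulo $p^{k}$ (recall $k \geq 1$ because $\lambda^{l}\equiv 1\pmod{p}$). This immediately gives the disjointness of the $l$ cosets $\lambda^{i}+p^{k}\Z_p$ on the right-hand side.

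Next, write any exponent $n$ as $n = ml+i$ with $0\le i<l$, so that $\lambda^{n}=\lambda^{i}\cdot\mu^{m}$, where $\mu:=\lambda^{l}=1+p^{k}u$ for some $u\in\Z_p^{*}$ (by the definition of $k$). Since $(1+p^{k}u)^{m}\in 1+p^{k}\Z_p$ for every $m\in\N$, every power $\lambda^{n}$ lies in the coset $\lambda^{i}+p^{k}\Z_p$. This gives the inclusion ``$\subseteq$'' of the closure in the disjoint union, as the right-hand side is already closed.

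The heart of the argument is the reverse inclusion, for which it suffices to show that the orbit $\{\mu^{m}:m\in\N\}$ is dense in $1+p^{k}\Z_p$. Since $p\geq 3$, the $p$-adic logarithm is a topological group isomorphism $\log_{p}:(1+p\Z_p,\cdot)\to(p\Z_p,+)$ which restricts to an isomorphism $1+p^{k}\Z_p\to p^{k}\Z_p$. Under this identification, $\mu=1+p^{k}u$ corresponds to $\log_{p}(\mu)=p^{k}u+O(p^{2k})=p^{k}u'$ with $u'\in\Z_p^{*}$. Thus the problem reduces to the elementary fact that the additive subgroup of $\Z_p$ generated by $p^{k}u'$ is dense in $p^{k}\Z_p$, which is immediate since $\Z$ is dense in $\Z_p$ and $u'$ is a unit.

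Transporting back through $\exp_{p}$, the closure of $\{\mu^{m}\}$ equals $1+p^{k}\Z_p$, and multiplying by each $\lambda^{i}\in\Z_p^{*}$ yields the desired coset decomposition. The main technical obstacle is precisely this density of $\{\mu^{m}\}$ in $1+p^{k}\Z_p$; the $p$-adic logarithm gives the cleanest route, but one could equivalently argue by induction on the precision $p^{k+j}$, using the expansion $(1+p^{k}u)^{m}\equiv 1+mp^{k}u\pmod{p^{k+1}}$ to fit any prescribed $p$-adic digit in turn.
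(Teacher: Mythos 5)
Your proposal is correct and follows essentially the same route as the paper: both reduce the problem to the coset $1+p^k\Z_p$ via the decomposition $n=ml+i$, and both use the $p$-adic logarithm/exponential isomorphism between $1+p^k\Z_p$ and $p^k\Z_p$ to deduce density of $\{(\lambda^l)^m\}$ from the density of $\N$ in $\Z_p$ (the paper writes this as $\overline{\{\lambda^{ln}\}}=\exp_p(p^k\Z_p)$ after noting $|p^{-k}\log_p(\lambda^l)|_p=1$, which is exactly your unit $u'$). Your treatment of the disjointness and of the inclusion $\subseteq$ is slightly more explicit than the paper's, but the substance is identical.
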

If $\lambda$ is a root of unity, then $k=\infty$, and the theorem is still correct.

\begin{thm}\label{prop, orbit of 1 in ramified extension, under T(z)=lambda z}
Let $\Q_p(\sqrt{d})$ be a quadratic unramified extension of $\Q_p$, where $p\geq 3$ is a prime. Suppose that $\lambda \in U^{0}(\sqrt{d})$ has order $l$ modulo $p$, and $k= \nu_p(\lambda^l-1)$. Then the closed subgroup of $U^0(\sqrt{d})$ generated by $\lambda$ is
$$\overline{\left\{\lambda^n\,|\, n\in \N\right\}}= \bigsqcup_{i=0}^{l-1} \left(\lambda^i + p^k \Z_p[\sqrt{d}]\right)\bigcap U^{0}(\sqrt{d}).$$
In particular, if $l= 2(p+1)$ and $k=1$, then
$$\overline{\left\{\lambda^n\,|\, n\in \N\right\}}= U^{0}(\sqrt{d}).$$
\end{thm}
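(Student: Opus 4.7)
The plan is to translate the multiplicative problem into an additive one via the $p$-adic logarithm $\log\colon 1 + p\Z_p[\sqrt{d}] \to p\Z_p[\sqrt{d}]$, which for $p \geq 3$ is a topological group isomorphism (with inverse $\exp$). Because $U^0(\sqrt{d})$ is compact, the closure $\overline{\{\lambda^n : n \in \N\}}$ equals the closed subgroup it generates, and decomposes as $\bigsqcup_{i=0}^{l-1} \lambda^i H$, where $H = \overline{\langle \lambda^l \rangle}$. Once I identify $H$ as $(1 + p^k\Z_p[\sqrt{d}]) \cap U^0(\sqrt{d})$, each coset $\lambda^i H$ rewrites as $(\lambda^i + p^k\Z_p[\sqrt{d}]) \cap U^0(\sqrt{d})$, since $\lambda^i \in \Z_p[\sqrt{d}]^*$ and $\lambda^i U^0(\sqrt{d}) = U^0(\sqrt{d})$. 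Disjointness follows because $\lambda^0, \ldots, \lambda^{l-1}$ are distinct modulo $p$ by the definition of $l$.

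To compute $H$, write $\lambda^l = 1 + p^k \mu$ with $p \nmid \mu$. Since $\lambda^l \equiv 1 \pmod{p}$, we have $N(\lambda^l) \equiv 1 \pmod{p}$, and for $p \geq 3$ this rules out $N(\lambda^l) = -1$; so $N(\lambda^l) = 1$. The identity $\log \circ N = \mathrm{tr} \circ \log$ (coming from $N(u) = u\cdot\sigma(u)$ and commutation of $\log$ with the Galois automorphism $\sigma$) forces $\log \lambda^l \in \ker(\mathrm{tr}) \cap p^k\Z_p[\sqrt{d}] = p^k \Z_p \sqrt{d}$. Writing $\log \lambda^l = p^k c \sqrt{d}$ with $c \in \Z_p^*$ (the valuation being exact since $\lambda^l \notin 1+p^{k+1}\Z_p[\sqrt{d}]$), the closed cyclic subgroup $\overline{\Z \cdot p^k c \sqrt{d}}$ equals $p^k \Z_p \sqrt{d}$. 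Exponentiating gives $H = \exp(p^k \Z_p \sqrt{d})$, and the same trace-kills-log characterization applied to an arbitrary $u \in (1 + p^k\Z_p[\sqrt{d}]) \cap U^0(\sqrt{d})$ (again using that $N(u) = -1$ is impossible for $k \geq 1$, $p \geq 3$) shows $\log u \in p^k \Z_p \sqrt{d}$, yielding the desired equality.

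For the ``in particular'' case $l = 2(p+1)$ and $k = 1$, reduction modulo $p$ sends $U^0(\sqrt{d})$ onto $\mathrm{Nm}^{-1}(\{\pm 1\}) \subset \mathbb{F}_{p^2}^*$, where $\mathrm{Nm}(x) = x^{p+1}$ is the norm of $\mathbb{F}_{p^2}/\mathbb{F}_p$; this target has exactly $2(p+1)$ elements, since $|\ker \mathrm{Nm}| = p+1$ and $-1$ lies in the image of the surjective map $\mathrm{Nm}\colon \mathbb{F}_{p^2}^* \to \mathbb{F}_p^*$. The $l$ reductions $\bar\lambda^0, \ldots, \bar\lambda^{l-1}$ are distinct by the definition of $l$, hence exhaust the image, so the union of the $l$ cosets covers $U^0(\sqrt{d})$. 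The main technical point is setting up the log/trace dictionary so that the norm-$1$ subgroup of $1 + p\Z_p[\sqrt{d}]$ is precisely mapped onto the rank-one $\Z_p$-submodule $\Z_p\sqrt{d} \subset \Z_p[\sqrt{d}]$; once this is in place, the closure computation reduces to the elementary $\overline{\Z c} = \Z_p$ for $c \in \Z_p^*$.
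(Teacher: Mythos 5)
Your proposal is correct, and it takes a genuinely different route from the paper. The paper never touches the logarithm of $\lambda^l$ directly: it shows that the order of $z=1+p^kw$ ($p\nmid w$) modulo $p^n$ is exactly $p^{n-k}$, combines this with the index computation $|U^{0}_k(\sqrt{d})/U^{0}_n(\sqrt{d})|=p^{n-k}$ (itself obtained from a Snake Lemma argument applied to the norm exact sequences, Lemma \ref{monothetic group lemma unramified case}), and then invokes the profinite generation criterion of Ribes--Zalesskii to conclude that $\lambda^l$ topologically generates $U^{0}_k(\sqrt{d})$; the final count $2(p+1)$ for the ``in particular'' clause is again read off from Lemma \ref{monothetic group lemma unramified case}.$(1)$. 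You instead linearize via $\log_p/\exp_p$ (legitimate on $1+p\Z_p[\sqrt{d}]$ for $p\ge 3$), use $\log_p\circ N=\mathrm{tr}\circ\log_p$ to identify the norm-one part of $1+p^k\Z_p[\sqrt{d}]$ with $\exp_p(p^k\Z_p\sqrt{d})$, and reduce the closure computation to $\overline{\Z c}=\Z_p$; the ``in particular'' clause becomes a count of $|\mathrm{Nm}^{-1}(\{\pm1\})|=2(p+1)$ in $\mathbb{F}_{p^2}^*$. Your key reductions are all sound: the exclusion of $N=-1$ on $1+p^k\Z_p[\sqrt{d}]$ for $p\ge3$, the exactness of the valuation of $\log_p\lambda^l$ (so that $c\in\Z_p^*$ and $c\sqrt{d}$ is a unit since $d\in\Z_p^*$ in the unramified case), the identification $\ker(\mathrm{tr})\cap p^k\Z_p[\sqrt{d}]=p^k\Z_p\sqrt{d}$, and the disjointness of the cosets from the definition of $l$. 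What your approach buys is the elimination of the homological-algebra scaffolding (the commutative diagrams and the Snake Lemma) and of the external profinite-group citation, at the modest cost of justifying that $\log_p$ commutes with the Galois involution and that the finite-field norm is surjective; both arguments generalize comparably to the ramified setting.
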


\begin{thm}\label{prop, orbit of 1 in extension2, under T(z)=lambda z}
Let $\Q_p(\sqrt{d})$ be a quadratic ramified extension of $\Q_p$, where $p\geq 3$ is a prime. Suppose that $\lambda \in U^{0}(\sqrt{d})$ has order $l$ modulo $\pi$, and $k= \nu_{\pi}(\lambda^l-1)$. Then the closed subgroup of $U^0(\sqrt{d})$ generated by $\lambda$ is
\begin{equation}\label{result, equation of monothetic group in ramified case}
\overline{\left\{\lambda^n\,|\, n\in \N\right\}}= \bigsqcup_{i=0}^{l-1} \left(\lambda^i + \pi^k \Z_p[\sqrt{d}]\right)\bigcap U^{0}(\sqrt{d}).    
\end{equation}

In particular, if  $l=4$ and $k=1$, then 
$$\overline{\left\{\lambda^n\,|\, n\in \N\right\}}= U^{0}(\sqrt{d}).$$
\end{thm}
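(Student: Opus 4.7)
The proof follows the same strategy as the proofs of Theorems \ref{prop, orbit closure of 1 in Qp} and \ref{prop, orbit of 1 in ramified extension, under T(z)=lambda z}, with technical modifications tailored to the ramified quadratic extension.

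I first handle the inclusion $\subseteq$ in \eqref{result, equation of monothetic group in ramified case}. For any $n\ge 0$, write $n=ql+r$ with $0\le r<l$. Since $\lambda^l=1+\pi^k\mu$ lies in the closed multiplicative subgroup $1+\pi^k\Z_p[\sqrt d]$, so does $(\lambda^l)^q$, and therefore
$$\lambda^n=\lambda^r(\lambda^l)^q\in\lambda^r+\pi^k\Z_p[\sqrt d],$$
using that $\lambda^r$ is a unit. Since also $\lambda^n\in U^0(\sqrt d)$, it lies in the $r$th piece of the right-hand side, which is closed; taking closure gives the inclusion.

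For the reverse inclusion, by translation by $\lambda^i$ it suffices to prove that $H:=(1+\pi^k\Z_p[\sqrt d])\cap U^0(\sqrt d)$ is contained in $\overline{\{(\lambda^l)^n:n\in\N\}}$. I would filter $H$ by the closed subgroups $H_j:=H\cap(1+\pi^{k+je}\Z_p[\sqrt d])$, $j\ge 0$, so that $H=\varprojlim_j H/H_j$ is a pro-$p$ group (in fact, after a direct analysis using the constraint $N=\pm 1$, topologically isomorphic to $\Z_p$). Showing $\overline{\langle\lambda^l\rangle}=H$ then reduces to verifying that the image of $\lambda^l$ generates each finite quotient $H/H_j$.

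The central quantitative step is the successor relation
$$\nu_\pi\bigl((\lambda^l)^{p^m}-1\bigr)=k+me,\qquad m\ge 0.$$
Once $k>e/(p-1)$ this follows from the binomial expansion
$$(1+\pi^k\mu)^p=1+p\,\pi^k\mu+\binom{p}{2}\pi^{2k}\mu^2+\cdots+\pi^{pk}\mu^p,$$
whose leading term $p\,\pi^k\mu$ has $\pi$-valuation $k+e$ while every other term has strictly higher valuation (the comparison relying on $(p-1)k>e$), so that $\nu_\pi((\lambda^l)^p-1)=k+e$, and iterating yields the general case. The main technical obstacle is the borderline situation $p=3$, $k=1$, where the terms $p\,\pi^k\mu$ and $\pi^{pk}\mu^p$ share the common valuation $k+e$; here I would write $p=\pi^e/v$ with $v\in\Z_p^*$ satisfying $\pi^2=pv$, expand by hand to compare the dominant coefficient $\mu(v^{-1}+\mu^2)$ against its competitors, and verify using $\pi\nmid\mu$ together with the $N=\pm 1$ constraint that the resulting successor relation still holds. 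Combined with the identification of $H$ with $\Z_p$, this shows that $\lambda^l$ topologically generates $H$.

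Finally, for the ``in particular'' case $l=4$, $k=1$: the condition $l\mid|\F_p^*|=p-1$ forces $p\ge 5$ with $p\equiv 1\pmod 4$, which lies outside the borderline regime. The main statement then yields $\overline{\{\lambda^n\}}=\bigsqcup_{i=0}^{3}(\lambda^i+\pi\Z_p[\sqrt d])\cap U^0(\sqrt d)$, and these four cosets exhaust $U^0(\sqrt d)$ because the reduction map $U^0(\sqrt d)\to\F_p^*$ has image equal to the subgroup of fourth roots of unity in $\F_p^*$ (the elements $x$ with $x^2\equiv\pm 1\pmod p$), a group of order $4$, which is generated by $\lambda\bmod\pi$ by hypothesis.
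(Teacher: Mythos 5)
Your proposal is correct and is, at its core, the same argument as the paper's: both reduce the theorem to showing that $\lambda^l$ topologically generates the principal-unit subgroup $H=(1+\pi^k\Z_p[\sqrt d])\cap U^0(\sqrt d)$ by verifying generation of every finite quotient $H/H_j$, and both then peel off the $l$ cosets $\lambda^iH$. Two remarks on where you diverge. First, the one ingredient you defer --- that $H/H_j$ is cyclic of order $p^{j}$, equivalently $H\cong\Z_p$ --- is precisely what the paper's Lemmas \ref{monothetic group lemma ramified case} and \ref{monothetic group lemma ramified case second} establish (via the Snake Lemma applied to the norm maps $U_{2n}(\sqrt d)\to U_n$ and $U_{2n+1}(\sqrt d)\to U_{n+1}$); your ``direct analysis using $N=\pm1$'' must actually be carried out, since the successor relation $\nu_\pi((\lambda^l)^{p^m}-1)=k+2m$ only bounds the order of $\lambda^l$ in $H/H_j$ from below, and one needs $|H/H_j|=p^{j}$ to conclude generation. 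The paper instead derives $p^{n-s}\mid m$ from the binomial congruence and pins $m$ down using this index; either way the index computation is indispensable, so you should cite or reprove those lemmas. Second, your explicit treatment of the borderline case $p=3$, $k=1$, where $p\,\pi^{k}\mu$ and $\pi^{pk}\mu^{p}$ share the valuation $k+2$, is a genuine improvement in rigor: the paper's Case 2 (``very similar'' to Case 1) silently applies the dominant-term argument there as well, and it is exactly the constraint $N(\lambda^l)=1$ (note $N=-1$ is impossible on $1+\pi\Z_p[\sqrt d]$, since norms there lie in $1+p\Z_p$) that rules out the potential cancellation, as your sketch indicates. Finally, your derivation of the ``in particular'' clause from the fact that the reduction of $U^0(\sqrt d)$ modulo $\pi$ is the group of fourth roots of unity in $(\Z/p\Z)^*$ is a correct substitute for the paper's appeal to $|U^{0}(\sqrt{d})/U^{0}_1(\sqrt{d})|=4$.
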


We require some notations and terminology from Section \ref{pre section} to provide examples corresponding to the theorems mentioned above. Thus, we present the examples only later, in Section~\ref{examples section}. However, the reader may find it useful to browse through them already at this point.

\section{Preliminaries}\label{pre section}
\subsection{The Exponential and Logarithm Functions Over $p$-adic Fields}
Let $K$ be either $\Q_p$ or a quadratic extension $\Q_p(\sqrt{d})$ thereof. For $a\in K$ and $r>0$, denote the open disk of radius $r$ centered at $a$ by
$$D(a,r)=\{x\in K\,|\, |x-a|_p<r\},$$
the closed disk of radius $r$ centered at $a$ by
$$\overline{D}(a,r)=\{x\in K\,|\, |x-a|_p\leq r\},$$
and the sphere of radius $r$ centered at $a$ by
$$S(a,r)=\{x\in K\,|\, |x-a|_p=r\}.$$

The $p$-adic logarithm function is defined by 
$$\log_p(x)= \sum_{n=1}^\infty (-1)^{n+1}\frac{(x-1)^n}{n}, \qquad x\in 1+ \pi O_K,$$
where $O_K$ is the ring of integers of $K$ ($\Z_p$ for $K= \Q_p$ and $\Z_p[\sqrt{d}]$ for $K=\Q_p(\sqrt{d})$).
For any $x,y \in 1+ \pi O_K$, we have 
$$\log_p(x y)= \log_p(x)+\log_p(y).$$
The $p$-adic exponential function is defined by 
$$\exp_p(x)= \sum_{n=0}^\infty \frac{x^n}{n!}, \qquad x\in D(0,p^{-1/(p-1)}).$$
For any $x,y \in D(0,p^{-1/(p-1)})$, we have $x+y \in D(0,p^{-1/(p-1)})$ and 
$$\exp_p(x+y)= \exp_p(x)\exp_p(y).$$
It can be seen that for $x\in D(0,p^{-1/(p-1)})$, we have
$$\log_p(\exp_p(x))= x, \qquad  \exp_p(\log_p(1+x))= 1+x.$$
Thus, $\exp_p$ is an isometric isomorphism from $D(0,p^{-1/(p-1)})$ onto $1+ D(0,p^{-1/(p-1)})$ and $\log_p$ is its inverse. For more details on the $p$-adic exponential and logarithm functions, we refer to \cite[Section 5.4]{Robert}.

\subsection{Some Monothetic Subgroups of the Group of Units}
Denote the subgroups $1+p^n\Z_p$ of $U$ by $U_n$, $n\geq 1$. Clearly
$U\supseteq U_1 \supseteq U_2\supseteq\cdots$ and $\bigcap_{n=1}^\infty U_n= \{1\}$.
The group $U(\sqrt{d})$ contains a subgroup $V(\sqrt{d})$ of order $(p^f-1)$, consisting of the roots of unity of this order. The group $U(\sqrt{d})$ factorizes in the form: $U(\sqrt{d})= V(\sqrt{d}) \times (1+ \pi \Z_p[\sqrt{d}])$ (see \cite[p.106]{Robert}). The subgroup $1+ \pi \Z_p[\sqrt{d}]$ contains the subgroups:
$$U_n(\sqrt{d})= 1+ \pi^n\Z_p[\sqrt{d}], \qquad n=1,2,3,\ldots.$$
Clearly, $$U(\sqrt{d})\supseteq U_1(\sqrt{d}) \supseteq U_2(\sqrt{d})\supseteq\cdots, \qquad  \bigcap_{k=1}^\infty U_k(\sqrt{d})= \{1\}.$$
Denote
$$U^{0}_n(\sqrt{d})= \{z\in U_n(\sqrt{d})\,|\, N(z)=1\}, \qquad n=1,2,3\ldots.$$
The following three lemmas present some relations between the subgroups $U_n^0(\sqrt{d})$, when the extension is unramified (Lemma \ref{monothetic group lemma unramified case}) and when it is ramified (Lemma \ref{monothetic group lemma ramified case} and Lemma \ref{monothetic group lemma ramified case second}). (The first part of Lemma \ref{monothetic group lemma unramified case} is explained in \cite[p.2]{Parry}. We are unaware of the second part, as well as Lemma \ref{monothetic group lemma ramified case} and Lemma \ref{monothetic group lemma ramified case second}, being stated in the literature.)
\begin{lemma}\label{monothetic group lemma unramified case}
Let $\Q_p(\sqrt{d})$ be an unramified quadratic extension of $\Q_p$, where $p\geq 3$ is a prime. Then:
\begin{enumerate}
\item $$|U^{0}(\sqrt{d})/U^{0}_1(\sqrt{d})|= 2(p+1);$$
\item $$|U^{0}_k(\sqrt{d})/U^{0}_n(\sqrt{d})|= p^{n-k}, \qquad n\geq k\geq 1.$$
\end{enumerate}
\end{lemma}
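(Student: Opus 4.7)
The plan for part (1) is to analyze the reduction homomorphism $\phi \colon U^{0}(\sqrt{d}) \to (\Z_p[\sqrt{d}]/p\Z_p[\sqrt{d}])^{*}$. Since the extension is unramified, the residue ring $\Z_p[\sqrt{d}]/p\Z_p[\sqrt{d}]$ is the field with $p^{2}$ elements, which I denote $\mathbf{F}_{p^{2}}$. First I would identify the kernel: if $z = 1+pu$ with $u\in\Z_p[\sqrt{d}]$, then $N(z)=1+p\,\mathrm{Tr}(u)+p^{2}N(u)\equiv 1\pmod{p}$; since $-1\not\equiv 1\pmod p$ for $p\geq 3$, the condition $N(z)=\pm 1$ forces $N(z)=1$, so $\ker\phi=U^{0}_{1}(\sqrt{d})$. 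The image is contained in $\{\bar{z}\in\mathbf{F}_{p^{2}}^{*}:N_{\mathbf{F}_{p^{2}}/\mathbf{F}_p}(\bar{z})=\pm 1\}$; writing the norm as $\bar{z}\mapsto \bar{z}^{\,p+1}$, this map surjects $\mathbf{F}_{p^{2}}^{*}$ onto $\mathbf{F}_p^{*}$ with kernel of size $p+1$, so the target set has cardinality $2(p+1)$.

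The main obstacle is surjectivity of $\phi$ onto this target set. I plan to use Teichm\"uller lifts: inside $U(\sqrt{d})$ sits the group $\mu_{p^{2}-1}$ of $(p^{2}-1)$-th roots of unity, mapping bijectively onto $\mathbf{F}_{p^{2}}^{*}$. For $\omega\in\mu_{p^{2}-1}$, the non-trivial Galois automorphism of $\Q_p(\sqrt{d})/\Q_p$ acts on $\omega$ as Frobenius, so $N(\omega)=\omega^{p+1}\in\mu_{p-1}\subset\Z_p^{*}$. Because reduction $\mu_{p-1}\to\mathbf{F}_p^{*}$ is a bijection, the condition $\bar{\omega}^{\,p+1}=\pm 1$ forces $N(\omega)=\pm 1$ already in $\Z_p$, i.e.\ $\omega\in U^{0}(\sqrt{d})$. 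This exhibits preimages in $U^0(\sqrt{d})$ for every point of the target set, giving $|U^{0}(\sqrt{d})/U^{0}_{1}(\sqrt{d})|=2(p+1)$.

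For part (2), the plan is to transport the problem to the additive side via the $p$-adic logarithm. Since $p\geq 3$, one has $p^{-k}<p^{-1/(p-1)}$ for every $k\geq 1$, so $\exp_p$ and $\log_p$ give mutually inverse group isomorphisms $U_{k}(\sqrt{d})\cong p^{k}\Z_p[\sqrt{d}]$. Because $N(z)=z\,\sigma(z)$ for the non-trivial Galois automorphism $\sigma$, and $\sigma$ commutes with $\log_p$, the logarithm intertwines norm and trace: $\log_p N(z) = \mathrm{Tr}(\log_p z)$. Hence on $U_{k}(\sqrt{d})$, the condition $N(z)=1$ (which, just as in part (1), is equivalent to $N(z)=\pm 1$) corresponds under $\log_p$ to $\mathrm{Tr}(\log_p z)=0$. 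In the unramified case $\mathrm{Tr}(a+b\sqrt{d})=2a$, and since $p$ is odd this trace vanishes exactly on $\Z_p\sqrt{d}$. Therefore $\log_p$ restricts to a bijection $U^{0}_{k}(\sqrt{d})\stackrel{\sim}{\to}p^{k}\Z_p\sqrt{d}$, a free $\Z_p$-module of rank $1$; hence $|U^{0}_{k}(\sqrt{d})/U^{0}_{n}(\sqrt{d})|=|p^{k}\Z_p\sqrt{d}/p^{n}\Z_p\sqrt{d}|=p^{n-k}$, as desired.
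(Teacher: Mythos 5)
Your proof is correct, but it takes a genuinely different route from the paper's. The paper proves both parts homologically: it builds short exact sequences $1\to U^{0}_\bullet(\sqrt{d})\to U_\bullet(\sqrt{d})\xrightarrow{N} U_\bullet$, applies the Snake Lemma to compare quotients, and computes $|U^{0}_k(\sqrt{d})/U^{0}_n(\sqrt{d})|$ as the ratio $|U_k(\sqrt{d})/U_n(\sqrt{d})|\,/\,|U_k/U_n|=p^{2(n-k)}/p^{n-k}$; for part (1) it must additionally invoke an external result (\cite[Lemma 11]{Parry}) for the surjectivity of the induced norm map $\overline{N}$. You instead argue directly on the residue field: your identification of $\ker\phi$ with $U^{0}_1(\sqrt{d})$ (using that $N(1+pu)\equiv 1\ (\textup{mod}\ p)$ rules out norm $-1$) and your Teichm\"uller-lift argument replace both the Snake Lemma and the citation of Parry, making part (1) self-contained; the only standard facts you use are that the nontrivial automorphism acts as Frobenius on $\mu_{p^2-1}$ and that $\mu_{p-1}\to\mathbf{F}_p^{*}$ is bijective, both of which are fine. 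For part (2), your logarithm/trace computation is also sound (the estimate $p^{-k}<p^{-1/(p-1)}$ for $p\geq 3$, $k\geq1$ justifies the isometric isomorphism, and $\log_p\circ N=\mathrm{Tr}\circ\log_p$ follows since the Galois automorphism commutes with the power series), and it actually yields something stronger than the index count: an explicit isomorphism $U^{0}_k(\sqrt{d})\cong p^{k}\Z_p\sqrt{d}\cong\Z_p$, which is precisely the procyclicity that the paper later has to extract separately via \cite[Lemma 2.4.1]{Ribes} in the proof of Theorem \ref{prop, orbit of 1 in ramified extension, under T(z)=lambda z}. The trade-off is that the paper's Snake Lemma template transfers verbatim to the ramified case (Lemmas \ref{monothetic group lemma ramified case} and \ref{monothetic group lemma ramified case second}), whereas your trace computation $\mathrm{Tr}(a+b\sqrt{d})=2a$ and the clean dichotomy ``trace zero $=\Z_p\sqrt{d}$'' are specific to the unramified setting and would need to be reworked there.
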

\begin{proof}
\begin{enumerate}
\item The canonical map $\Z_p[\sqrt{d}] \to \Z_p[\sqrt{d}]/p\Z_p[\sqrt{d}]\cong(\Z/p\Z)[\sqrt{d}]$ induces a surjective homomorphism 
$$ P_{0}: U(\sqrt{d}) \to (\Z/p\Z)[\sqrt{d}]^*.$$
We have $\ker(P_0)= U_{1}(\sqrt{d})$. Hence $$|U(\sqrt{d})/U_1(\sqrt{d})|= p^2-1.$$
Consider the commutative diagram in Figure \ref{commutative diagrams1}.
\begin{figure}
\centering
\begin{tikzcd}
&1 \arrow[d]     & 1 \arrow[d]    & 1 \arrow[d]&\\
1 \arrow[r]& U^{0}_1(\sqrt{d}) \arrow[d, "i"] \arrow[r, "i"]     & U_1(\sqrt{d}) \arrow[r,"N"] \arrow[d, "i"]    & U_1 \arrow[d, "i"]\arrow[ r]&1 \\
1 \arrow[r]& \ker(N) \arrow[d, "P"] \arrow[r, "i"]     & U(\sqrt{d}) \arrow[r,"N"] \arrow[d, "P"]    & U \arrow[d, "P"]& \\
1 \arrow[r]& \ker(N)/U^{0}_{1}(\sqrt{d}) \arrow[d] \arrow[r,"\overline{i}"]     & U(\sqrt{d})/U_{1}(\sqrt{d}) \arrow[r,"\overline{N}"] \arrow[d]    & U/U_{1} \arrow[d]& \\
&1                          & 1                       & 1 &\\
\end{tikzcd}
\caption{Relations between $U_1, U^{0}_1(\sqrt{d})$, and $U_1(\sqrt{d})$, for unramified extensions.}\label{commutative diagrams1}
\end{figure}
(Here and later, $i$ and $P$ denote embedding and projection, respectively; $\overline{i}$ and $\overline{N}$ are the homomorphisms induced by $i$ and $N$, respectively.) By the Snake Lemma \cite[Lemma 1.3.2]{Weibel} we have $\textup{Im}(\overline{i})= \ker(\overline{N})$. The cardinality of the group $U/U_1$ is $p-1$ (see \cite[p.223]{Narkiewicz}). It follows from \cite[Lemma 11]{Parry} that $\overline{N}$ is surjective. Therefore
$$|\ker(N)/U^{0}_1(\sqrt{d})|=p+1.$$ 
Hence,
$$|U^{0}(\sqrt{d})/U^{0}_1(\sqrt{d})|=2(p+1).$$ 
\item The norm homomorphism $N : U_n(\sqrt{d})\to U_n$, $n\in \N$, is surjective. In fact, for any $z\in U_n$ we have $w= \exp_p(1/2\cdot \log_p(z)) \in U_n$ and $N(w)=z$. For any $n\geq k$, consider the commutative diagram in Figure \ref{commutative diagrams2}.
\begin{figure}
\centering
\begin{tikzcd}
&1 \arrow[d]     & 1 \arrow[d]    & 1 \arrow[d]&\\
1 \arrow[r]& U^{0}_n(\sqrt{d}) \arrow[d, "i"] \arrow[r, "i"]     & U_n(\sqrt{d}) \arrow[r,"N"] \arrow[d, "i"]    & U_n \arrow[d, "i"]\arrow[ r]&1 \\
1 \arrow[r]& U^{0}_k(\sqrt{d}) \arrow[d, "P"] \arrow[r, "i"]     & U_k(\sqrt{d}) \arrow[r,"N"] \arrow[d, "P"]    & U_k \arrow[d, "P"]\arrow[ r]&1 \\
1 \arrow[r]& U^{0}_k(\sqrt{d})/U^{0}_{n}(\sqrt{d}) \arrow[d] \arrow[r,"\overline{i}"]     & U_k(\sqrt{d})/U_{n}(\sqrt{d}) \arrow[r,"\overline{N}"] \arrow[d]    & U_k/U_{n} \arrow[d]\arrow[ r]&1 \\
&1                          & 1                      & 1 &\\
\end{tikzcd}
\caption{Relations between $U^{0}_k(\sqrt{d})$ and $U^{0}_n(\sqrt{d})$, with $n\geq k$, for unramified extensions.}\label{commutative diagrams2}
\end{figure}
Invoking the Snake Lemma again, we see that $\textup{Im}(\overline{i})= \ker(\overline{N})$ and that $\overline{N}$ is surjective. It follows from \cite[Proposition 5.9]{Narkiewicz} that 
$$|U_n/U_{n+1}|= p, \qquad n=1,2,3,\ldots,$$
and
$$|U_n(\sqrt{d})/U_{n+1}(\sqrt{d})|=p^{2},  \qquad n=1,2,3,\ldots.$$
Therefore,
$$|U_k/U_n|= p^{n-k}, \qquad n\geq k\geq 1,$$
and 
$$|U_k(\sqrt{d})/U_{n}(\sqrt{d})|=p^{2(n-k)}, \qquad n\geq k\geq 1.$$
Hence,
$$|U^{0}_k(\sqrt{d})/U^{0}_n(\sqrt{d})|= p^{n-k}, \qquad n\geq k\geq 1.$$
\end{enumerate}
\end{proof}

\begin{lemma}\label{monothetic group lemma ramified case}
Let $\Q_p(\sqrt{d})$ be a ramified quadratic extension of $\Q_p$, where $p\geq 3$ is a prime. Then:
\begin{enumerate}
    \item $$|U^{0}(\sqrt{d})/U^{0}_2(\sqrt{d})|= 4p;$$
    \item $$|U^{0}_{2k}(\sqrt{d})/U^{0}_{2n}(\sqrt{d})|= p^{n-k}, \qquad n\geq k\geq 1.$$
\end{enumerate}
\end{lemma}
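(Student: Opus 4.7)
The plan is to mimic the proof of Lemma~\ref{monothetic group lemma unramified case}, replacing the unramified commutative diagrams by their ramified analogues. Two structural differences come into play. First, the residue field of $\Z_p[\sqrt{d}]$ in the ramified case is still $\mathbb{F}_p$, so $|U(\sqrt{d})/U_1(\sqrt{d})|=p-1$ and $|U_n(\sqrt{d})/U_{n+1}(\sqrt{d})|=p$ for every $n\geq 1$; in particular $|U_{2k}(\sqrt{d})/U_{2n}(\sqrt{d})|=p^{2(n-k)}$. Second, the norm map $N\colon U(\sqrt{d})\to U$ is no longer surjective: its image consists of the units that are quadratic residues modulo $p$, hence has index $2$.

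For part~(1), I would assemble the three-row commutative diagram with inclusion/projection vertical maps and short exact rows
$$1\to U^{0}_2(\sqrt{d})\to U_2(\sqrt{d})\xrightarrow{N} U_1\to 1,$$
$$1\to \ker N\to U(\sqrt{d})\xrightarrow{N} N(U(\sqrt{d}))\to 1,$$
together with the induced bottom row. The top row is exact once one checks $N\colon U_2(\sqrt{d})\to U_1$ is surjective: for $u\in U_1$, the element $z=\exp_p(\tfrac12\log_p u)\in U_2(\sqrt{d})\cap\Q_p$ is well-defined (using $p\geq 3$) and satisfies $N(z)=z^2=u$. The Snake Lemma then yields the exact sequence
$$1\to \ker N/U^{0}_2(\sqrt{d})\to U(\sqrt{d})/U_2(\sqrt{d})\xrightarrow{\overline N} N(U(\sqrt{d}))/U_1\to 1,$$
from which $|\ker N/U^{0}_2(\sqrt{d})|=(p-1)p/((p-1)/2)=2p$. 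Combined with $|U^{0}(\sqrt{d})/\ker N|=2$, coming from a preimage of $-1$ under $N$, this gives $|U^{0}(\sqrt{d})/U^{0}_2(\sqrt{d})|=4p$.

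For part~(2), I would run the analogous diagram with rows
$$1\to U^{0}_{2n}(\sqrt{d})\to U_{2n}(\sqrt{d})\xrightarrow{N} U_n\to 1,$$
$$1\to U^{0}_{2k}(\sqrt{d})\to U_{2k}(\sqrt{d})\xrightarrow{N} U_k\to 1,$$
after extending the $\exp_p/\log_p$ argument to show $N(U_{2m}(\sqrt{d}))=U_m$ for every $m\geq 1$. The Snake Lemma delivers
$$1\to U^{0}_{2k}(\sqrt{d})/U^{0}_{2n}(\sqrt{d})\to U_{2k}(\sqrt{d})/U_{2n}(\sqrt{d})\xrightarrow{\overline N} U_k/U_n\to 1,$$
and combining $|U_{2k}(\sqrt{d})/U_{2n}(\sqrt{d})|=p^{2(n-k)}$ with $|U_k/U_n|=p^{n-k}$ gives $|U^{0}_{2k}(\sqrt{d})/U^{0}_{2n}(\sqrt{d})|=p^{n-k}$.

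The main obstacle is the norm-image bookkeeping in the ramified setting: verifying surjectivity of $N$ on the congruence subgroups $U_{2m}(\sqrt{d})$ despite $\pi\notin\Q_p$, and justifying the factor of $2$ in part~(1) from attaining $-1$ as a norm. Once these are pinned down, the diagrammatic bookkeeping parallels the unramified case verbatim.
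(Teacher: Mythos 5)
Your argument is, step for step, the paper's own proof: the same three-row commutative diagrams, the same use of the Snake Lemma, the same surjectivity of $N$ on the congruence subgroups via $w=\exp_p(\tfrac12\log_p u)$, the same count $|U(\sqrt{d})/U_2(\sqrt{d})|=p(p-1)$ and $|\mathrm{Im}(\overline N)|=(p-1)/2$ giving $|\ker N/U^{0}_2(\sqrt{d})|=2p$, and the same index computation in part (2). The only cosmetic difference is that you truncate the middle row at $N(U(\sqrt{d}))$ to make it exact, where the paper keeps the codomain $U$ and observes that $\mathrm{Im}(\overline N)$ is the subgroup of squares; these are equivalent.

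The one step you single out as needing to be pinned down --- that $[U^{0}(\sqrt{d}):\ker N]=2$ because $-1$ is attained as a norm --- is genuinely the delicate point, and it is exactly the step the paper also passes over in silence (the paper jumps from $|\ker N/U^{0}_2(\sqrt{d})|=2p$ to $|U^{0}(\sqrt{d})/U^{0}_2(\sqrt{d})|=4p$ with a bare ``Hence''). Note that in the ramified case a unit $x+y\sqrt{d}$ has $N(x+y\sqrt{d})=x^2-y^2d\equiv x^2\ (\mathrm{mod}\ p)$, so norms of units are quadratic residues modulo $p$; consequently $-1\in N(U(\sqrt{d}))$ if and only if $-1$ is a square modulo $p$, i.e.\ $p\equiv 1\ (\mathrm{mod}\ 4)$. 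For $p\equiv 3\ (\mathrm{mod}\ 4)$ one has $U^{0}(\sqrt{d})=\ker N$ and the index is $2p$ rather than $4p$. So this factor of $2$ cannot be established unconditionally; your proof (like the paper's) rigorously yields $|\ker N/U^{0}_2(\sqrt{d})|=2p$ and part (2), but the ``$4p$'' in part (1) requires the additional hypothesis $p\equiv 1\ (\mathrm{mod}\ 4)$, or else $U^{0}$ should be read as the norm-one group.
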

\begin{proof}
\begin{enumerate}
\item The canonical map $\Z_p[\sqrt{d}] \to \Z_p[\sqrt{d}]/\pi\Z_p[\sqrt{d}]\cong\Z/p\Z$ induces the surjective homomorphism 
$$ P_{0}: U(\sqrt{d}) \to (\Z/p\Z)^*.$$
We have $\ker(P_0)= U_1(\sqrt{d})$.
Hence, 
\begin{equation}\label{proof, equation of U(d) over U1(d)  in ramified case}
 |U(\sqrt{d})/U_1(\sqrt{d})|= p-1.   
\end{equation}
By \cite[Proposition 5.9]{Narkiewicz} we have $|U_1(\sqrt{d})/U_2(\sqrt{d})|=p$. Hence 
$$|U(\sqrt{d})/U_2(\sqrt{d})|=p(p-1).$$
Consider the commutative diagram in Figure \ref{commutative diagrams3}.
\begin{figure}
\centering
\begin{tikzcd}
&1 \arrow[d]     & 1 \arrow[d]    & 1 \arrow[d]&\\
1 \arrow[r]& U^{0}_2(\sqrt{d}) \arrow[d, "i"] \arrow[r, "i"]     & U_2(\sqrt{d}) \arrow[r,"N"] \arrow[d, "i"]    & U_1 \arrow[d, "i"]\arrow[ r]&1 \\
1 \arrow[r]& \ker(N) \arrow[d, "P"] \arrow[r, "i"]     & U(\sqrt{d}) \arrow[r,"N"] \arrow[d, "P"]    & U \arrow[d, "P"]& \\
1 \arrow[r]& \ker(N)/U^{0}_{2}(\sqrt{d}) \arrow[d] \arrow[r,"\overline{i}"]     & U(\sqrt{d})/U_{2}(\sqrt{d}) \arrow[r,"\overline{N}"] \arrow[d]    & U/U_{1} \arrow[d]& \\
&1                          & 1                       & 1 &\\
\end{tikzcd}
\caption{Relations between $U_1, U^{0}_2(\sqrt{d})$, and $U_2(\sqrt{d})$, for ramified extensions.}\label{commutative diagrams3}
\end{figure}
By the Snake Lemma we have $\textup{Im}(\overline{i})= \ker(\overline{N})$. $\textup{Im}(\overline{N})$ is the set of squares in $U/U_{1}\cong (\Z/p\Z)^*$, so that $|\textup{Im}(\overline{N})|= (p-1)/2$. Therefore
$$|\ker(N)/U^{0}_2(\sqrt{d})|=2p.$$ 
Hence,
$$|U^{0}(\sqrt{d})/U^{0}_2(\sqrt{d})|=4p.$$ 
\item The norm homomorphism $N : U_{2n}(\sqrt{d})\to U_n$, $n\in \N$, is surjective. In fact, for any $z\in U_n$ we have $w= \exp_p(1/2\cdot \log_p(z)) \in U_n\subseteq U_{2n}(\sqrt{d})$ and $N(w)=z$. For any $n\geq k$, consider the commutative diagram in Figure \ref{commutative diagrams4}.
\begin{figure}
\centering
\begin{tikzcd}
&1 \arrow[d]     & 1 \arrow[d]    & 1 \arrow[d]&\\
1 \arrow[r]& U^{0}_{2n}(\sqrt{d}) \arrow[d, "i"] \arrow[r, "i"]     & U_{2n}(\sqrt{d}) \arrow[r,"N"] \arrow[d, "i"]    & U_n \arrow[d, "i"]\arrow[ r]&1 \\
1 \arrow[r]& U^{0}_{2k}(\sqrt{d}) \arrow[d, "P"] \arrow[r, "i"]     & U_{2k}(\sqrt{d}) \arrow[r,"N"] \arrow[d, "P"]    & U_k \arrow[d, "P"]\arrow[ r]&1 \\
1 \arrow[r]& U^{0}_{2k}(\sqrt{d})/U^{0}_{2n}(\sqrt{d}) \arrow[d] \arrow[r,"\overline{i}"]     & U_{2k}(\sqrt{d})/U_{2n}(\sqrt{d}) \arrow[r,"\overline{N}"] \arrow[d]    & U_k/U_{n} \arrow[d]\arrow[ r]&1 \\
&1                          & 1                       & 1 &\\
\end{tikzcd}
\caption{Relations between $U^{0}_{2k}(\sqrt{d})$ and $U^{0}_{2n}(\sqrt{d})$, with $n\geq k$, for ramified extensions.}\label{commutative diagrams4}
\end{figure}
By the Snake Lemma, $\textup{Im}(\overline{i})= \ker(\overline{N})$ and $\overline{N}$ is surjective. It follows from \cite[Proposition 5.9]{Narkiewicz} that 
$$|U_n/U_{n+1}|= p, \qquad n=1,2,3,\ldots,$$
and
$$|U_n(\sqrt{d})/U_{n+1}(\sqrt{d})|=p, \qquad n=1,2,3,\ldots.$$
Therefore,
$$|U_k/U_n|= p^{n-k}, \qquad n\geq k\geq 1,$$
and
$$|U_{2k}(\sqrt{d})/U_{2n}(\sqrt{d})|=p^{2(n-k)}, \qquad n\geq k\geq 1.$$
Hence, 
$$|U^{0}_{2k}(\sqrt{d})/U^{0}_{2n}(\sqrt{d})= p^{n-k}, \qquad n\geq k\geq 1.$$
\end{enumerate}
\end{proof}

\begin{lemma}\label{monothetic group lemma ramified case second}
Let $\Q_p(\sqrt{d})$ be a ramified quadratic extension of $\Q_p$, where $p\geq 3$ is a prime. Then:
\begin{enumerate}
    \item $$|U^{0}(\sqrt{d})/U^{0}_1(\sqrt{d})|= 4;$$
    \item $$|U^{0}_{2k+1}(\sqrt{d})/U^{0}_{2n+1}(\sqrt{d})|= p^{n-k}, \qquad n\geq k\geq 0.$$
\end{enumerate}
\end{lemma}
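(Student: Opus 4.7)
The plan is to parallel the proofs of Lemmas~\ref{monothetic group lemma unramified case} and~\ref{monothetic group lemma ramified case}, using short exact sequences built from the norm map $N$, Snake-Lemma chases, and the index equalities $|U_m(\sqrt{d})/U_{m+1}(\sqrt{d})| = p = |U_m/U_{m+1}|$ from \cite[Proposition~5.9]{Narkiewicz}.

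For part~(2), I first identify $N(U_{2m+1}(\sqrt{d}))$ for each $m\geq 0$. Writing $\pi^2 = pu$ for a unit $u \in \Z_p^*$ and recalling $\overline{\pi} = -\pi$, an element $w = 1 + \pi^{2m+1}(a+b\pi) \in U_{2m+1}(\sqrt{d})$ with $a,b\in\Z_p$ satisfies
$$N(w) = (1 + bp^{m+1}u^{m+1})^2 - a^2 p^{2m+1} u^{2m+1} \in U_{m+1},$$
since both correction terms have $p$-adic valuation at least $m+1$. Surjectivity of $N : U_{2m+1}(\sqrt{d}) \to U_{m+1}$ then follows from the square-root trick already used in Lemma~\ref{monothetic group lemma ramified case}: for $z \in U_{m+1}$, the element $w = \exp_p(\tfrac{1}{2}\log_p z)$ lies in $1 + p^{m+1}\Z_p \subseteq U_{2m+2}(\sqrt{d}) \subseteq U_{2m+1}(\sqrt{d})$ and satisfies $N(w) = w\overline{w} = w^2 = z$. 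Applying this with $m=k$ and $m=n$ and stacking the resulting short exact sequences into a $3\times 3$ diagram analogous to Figure~\ref{commutative diagrams4}, the Snake Lemma yields the short exact sequence
$$1 \to U^0_{2k+1}(\sqrt{d})/U^0_{2n+1}(\sqrt{d}) \to U_{2k+1}(\sqrt{d})/U_{2n+1}(\sqrt{d}) \xrightarrow{\overline{N}} U_{k+1}/U_{n+1} \to 1,$$
and counting gives $|U^0_{2k+1}(\sqrt{d})/U^0_{2n+1}(\sqrt{d})| = p^{2(n-k)}/p^{n-k} = p^{n-k}$.

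For part~(1), I reduce to computing $|U^0_1(\sqrt{d})/U^0_2(\sqrt{d})|$ and then combine with Lemma~\ref{monothetic group lemma ramified case}(1). The $m=0$ case of the preceding argument gives a surjection $N : U_1(\sqrt{d}) \to U_1$, while the $n=1$ case used inside the proof of Lemma~\ref{monothetic group lemma ramified case}(2) gives a surjection $N : U_2(\sqrt{d}) \to U_1$. Arranging these as the top and middle rows of a $3\times 3$ diagram with the identity map as the rightmost vertical column $U_1 \to U_1$, the quotient in the bottom-right corner is $U_1/U_1 = 1$, so the Snake Lemma collapses to an isomorphism $U^0_1(\sqrt{d})/U^0_2(\sqrt{d}) \cong U_1(\sqrt{d})/U_2(\sqrt{d})$, which has order $p$. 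Dividing the identity $|U^0(\sqrt{d})/U^0_2(\sqrt{d})| = 4p$ of Lemma~\ref{monothetic group lemma ramified case}(1) by this $p$ then gives $|U^0(\sqrt{d})/U^0_1(\sqrt{d})| = 4$.

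The main subtlety, as in the earlier lemmas, is the careful bookkeeping that pins down the target of $N$ when the starting radius has odd exponent $2m+1$. After the substitution $\pi^2 = pu$, the term $b\pi^{2m+2} = bp^{m+1}u^{m+1}$ already lives in $\Q_p$ and supplies the dominant contribution to $N(w)-1$ in $U_{m+1}$, whereas the off-diagonal term $a\pi^{2m+1}$ only resurfaces via the squared piece $-a^2 p^{2m+1} u^{2m+1}$ of higher valuation. This parity point distinguishes the present lemma from the even-exponent computation of Lemma~\ref{monothetic group lemma ramified case}(2); once it is in place, the remaining steps are routine index chases via \cite[Proposition~5.9]{Narkiewicz} and the Snake Lemma.
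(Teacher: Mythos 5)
Your proof is correct. Part (2) follows the paper's own argument essentially verbatim: the key point in both is that for $w=1+\pi^{2m+1}(a+b\pi)$ one has $N(w)=(1+b\pi^{2m+2})^2-a^2\pi^{4m+2}\in U_{m+1}$ (the odd-exponent parity bookkeeping you highlight is exactly the computation the paper performs), surjectivity of $N:U_{2m+1}(\sqrt{d})\to U_{m+1}$ via $w=\exp_p(\tfrac12\log_p z)$, and then the Snake Lemma applied to the same $3\times3$ diagram, with the index count $p^{2(n-k)}/p^{n-k}=p^{n-k}$. Part (1), however, takes a genuinely different route. The paper recomputes from scratch: it forms the diagram with rows $U^0_1(\sqrt{d})\to U_1(\sqrt{d})\to U_1$ and $\ker(N)\to U(\sqrt{d})\to U$, observes that $\mathrm{Im}(\overline{N})$ is the subgroup of squares in $U/U_1\cong(\Z/p\Z)^*$ of order $(p-1)/2$, deduces $|\ker(N)/U^0_1(\sqrt{d})|=2$, and doubles to get $4$. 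You instead interpolate the subgroup $U^0_2(\sqrt{d})$ between $U^0_1(\sqrt{d})$ and $U^0(\sqrt{d})$: a small Snake Lemma chase (with identity in the right-hand column) gives $U^0_1(\sqrt{d})/U^0_2(\sqrt{d})\cong U_1(\sqrt{d})/U_2(\sqrt{d})$ of order $p$, and dividing the already-established index $|U^0(\sqrt{d})/U^0_2(\sqrt{d})|=4p$ from Lemma \ref{monothetic group lemma ramified case}(1) by $p$ yields $4$. Your version is slightly more economical because it reuses the square-class computation already done inside Lemma \ref{monothetic group lemma ramified case}(1) rather than repeating it, at the cost of making the statement logically dependent on that lemma; the paper's version is self-contained. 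Both are valid, and all the exactness hypotheses you need (surjectivity of $N$ on $U_1(\sqrt{d})$ and on $U_2(\sqrt{d})$ onto $U_1$, and the kernels being the $U^0$-subgroups) do hold.
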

\begin{proof}
\begin{enumerate}
\item By \eqref{proof, equation of U(d) over U1(d)  in ramified case}, we have
$$|U(\sqrt{d})/U_1(\sqrt{d})|= p-1.$$
The norm homomorphism $N : U_{1}(\sqrt{d})\to U_{1}$, is surjective. In fact, for any $z\in U_{1}$ we have $w= \exp_p(1/2\cdot \log_p(z)) \in U_{1}\subseteq U_{1}(\sqrt{d})$ and $N(w)=z$.
Consider the commutative diagram in Figure \ref{commutative diagrams5}.
\begin{figure}
\centering
\begin{tikzcd}
&1 \arrow[d]     & 1 \arrow[d]    & 1 \arrow[d]&\\
1 \arrow[r]& U^{0}_1(\sqrt{d}) \arrow[d, "i"] \arrow[r, "i"]     & U_1(\sqrt{d}) \arrow[r,"N"] \arrow[d, "i"]    & U_1 \arrow[d, "i"]\arrow[ r]&1 \\
1 \arrow[r]& \ker(N) \arrow[d, "P"] \arrow[r, "i"]     & U(\sqrt{d}) \arrow[r,"N"] \arrow[d, "P"]    & U \arrow[d, "P"]& \\
1 \arrow[r]& \ker(N)/U^{0}_{1}(\sqrt{d}) \arrow[d] \arrow[r,"\overline{i}"]     & U(\sqrt{d})/U_{1}(\sqrt{d}) \arrow[r,"\overline{N}"] \arrow[d]    & U/U_{1} \arrow[d]& \\
&1                          & 1                       & 1 &\\
\end{tikzcd}
\caption{Relations between $U_1, U^{0}_1(\sqrt{d})$, and $U_1(\sqrt{d})$, for ramified extensions.}\label{commutative diagrams5}
\end{figure}
By the Snake Lemma we have $\textup{Im}(\overline{i})= \ker(\overline{N})$. $\textup{Im}(\overline{N})$ is the set of squares in $U/U_{1}\cong (\Z/p\Z)^*$, so that $|\textup{Im}(\overline{N})|= (p-1)/2$. Therefore
$$|\ker(N)/U^{0}_1(\sqrt{d})|=2.$$ 
Hence,
$$|U^{0}(\sqrt{d})/U^{0}_1(\sqrt{d})|=4.$$ 

\item Consider the norm homomorphism $N : U_{2n+1}(\sqrt{d})\to U_{n+1}$ for $n=0,1,2,\ldots$. We first notice that, in fact, its image is contained in $U_{n+1}$. Indeed, let $z=1 + \pi^{2n+1}b\in U_{2n+1}(\sqrt{d})$. Write $b=b_1 + b_2 \pi$ for some $b_1, b_2\in \Z_p$. We have
$$N(z)= N((1+ \pi^{2n+2} b_2 + \pi^{2n+1}b_1))= (1 + \pi^{2n+2}b_2)^2 - \pi^{4n+2}b_1^2.$$
Clearly, $N(z)\in U_{n+1}$.
The norm homomorphism $N : U_{2n+1}(\sqrt{d})\to U_{n+1}$, for each $n=0,1,2,\ldots$ is surjective. In fact, for any $z\in U_{n+1}$ we have $w= \exp_p(1/2\cdot \log_p(z)) \in U_{n+1}\subseteq U_{2n+1}(\sqrt{d})$ and $N(w)=z$. For any $n\geq k$, consider the commutative diagram in Figure \ref{commutative diagrams6}.
\begin{figure}
\centering
\begin{tikzcd}
&1 \arrow[d]     & 1 \arrow[d]    & 1 \arrow[d]&\\
1 \arrow[r]& U^{0}_{2n+1}(\sqrt{d}) \arrow[d, "i"] \arrow[r, "i"]     & U_{2n+1}(\sqrt{d}) \arrow[r,"N"] \arrow[d, "i"]    & U_{n+1} \arrow[d, "i"]\arrow[ r]&1 \\
1 \arrow[r]& U^{0}_{2k+1}(\sqrt{d}) \arrow[d, "P"] \arrow[r, "i"]     & U_{2k+1}(\sqrt{d}) \arrow[r,"N"] \arrow[d, "P"]    & U_{k+1} \arrow[d, "P"]\arrow[ r]&1 \\
1 \arrow[r]& U^{0}_{2k+1}(\sqrt{d})/U^{0}_{2n+1}(\sqrt{d}) \arrow[d] \arrow[r,"\overline{i}"]     & U_{2k+1}(\sqrt{d})/U_{2n+1}(\sqrt{d}) \arrow[r,"\overline{N}"] \arrow[d]    & U_{k+1}/U_{n+1} \arrow[d]\arrow[ r]&1 \\
&1                          & 1                       & 1 &
\end{tikzcd}
\caption{Relations between $U^{0}_{2k+1}(\sqrt{d})$ and $U^{0}_{2n+1}(\sqrt{d})$, with $n\geq k$, for ramified extensions.}\label{commutative diagrams6}
\end{figure}
By the Snake Lemma, $\textup{Im}(\overline{i})= \ker(\overline{N})$ and $\overline{N}$ is surjective. Since
$$|U_n/U_{n+1}|= p, \qquad n=1,2,3,\ldots,$$
and
$$|U_n(\sqrt{d})/U_{n+1}(\sqrt{d})|=p, \qquad n=1,2,3,\ldots,$$
we have
$$|U_{k+1}/U_{n+1}|= p^{n-k}, \qquad n\geq k\geq 0,$$
and
$$|U_{2k+1}(\sqrt{d})/U_{2n+1}(\sqrt{d})|=p^{2(n-k)}, \qquad n\geq k\geq 0.$$
Hence, 
$$|U^{0}_{2k+1}(\sqrt{d})/U^{0}_{2n+1}(\sqrt{d})|= p^{n-k}, \qquad n\geq k\geq 0.$$
\end{enumerate}
\end{proof}

\subsection{Möbius Transformations Over $p$-adic Fields}
Let $K$ be either $\Q_p$ or a quadratic extension thereof. Denote by $P^1(K)$ the projective line over $K$, defined as a set 
$$P^1(K)= (K^2- \{(0,0)\})/ \sim,$$
where $\sim$ is the equivalence relation defined by: $(x,y)\sim (x',y')$ if and only if $x'=cx$, $y'=cy$ for some $c\in K^*$. Denote the equivalence class of $(x,y)$ by $[x,y]$.
The point $[1,0]$ is the {\it point at infinity}, denoted by $\infty$. The field $K$
is embedded into $P^1(K)$ by the map $x\to [x,1]$. The subset 
$$\{[x,1]\in P^1(K)\, | \,x\in K\}\subseteq P^1(K)$$
consists of all points of $P^1(K)$ except for $\infty$. We usually view $P^1(K)$ as $K\cup \{\infty\}$.
Define a metric on $K\cup \{\infty\}$ by 
$$\rho(z_1,z_2)= \frac{|z_1- z_2|_p}{\max\{|z_1|_p,1\}\cdot \max\{|z_2|_p,1\}}, \qquad z_1,z_2\in K,$$
and 
\begin{equation*}
    \begin{split}
        \rho(z,\infty)&= \begin{cases}
            1, &\qquad |z|_p\leq 1\\
            \frac{1}{|z|_p},&\qquad |z|_p> 1.
        \end{cases}
    \end{split}
\end{equation*}
A Möbius transformation of $P^1(K)$ is a transformation of the form 
\begin{equation}\label{prel sec, equation of Mobius transformation}
\phi(x)= \frac{ax+b}{cx+ d}, \qquad (a,b,c,d \in K,\, ad-bc\neq 0),    
\end{equation}
where we agree that $\phi(-d/c)=\infty$ and $\phi(\infty)= a/c$.
Every $p$-adic Möbius transformation $\phi$ may be written uniquely as a composition $\phi_1 \circ \phi_2 \circ \phi_3\circ \phi_4$, where $\phi_1(x)= x+ \beta$, $\phi_2(x)= \alpha x$, $\phi_3(x)= 1/x$, and $\phi_4(x) = x + \gamma$ for some $\alpha, \beta, \gamma \in K$ (see \cite[p.322]{Robert}).

Let $D= a+ \pi^k O_K$ be a disk in $K$ for some $a\in K$ and $k\in \Z$. The image of $D$ under $\phi_1$ is
\begin{equation*}
    \begin{split}
        \phi_1(D)&=\{a+ \pi^k y + \beta\, |\, y\in O_K\}
        = a+ \beta + \pi^k O_K,
    \end{split}
\end{equation*}
and under $\phi_2$
\begin{equation*}
    \begin{split}
        \phi_2(D)&=\{\alpha(a+\pi^k y), |\, y\in O_K\}
        = \{\alpha a+\pi^{\nu_{\pi}(\alpha)+k} y, |\, y\in O_K\}
        = \alpha a + \pi^{\nu_{\pi}(\alpha)+k} O_K.
    \end{split}
\end{equation*}
As to the image of $D$ under $\phi_3$, we distinguish between two cases:

$(i)$ $0 \in D$.

In this case $D=\pi^k O_K$, and thus:
$$\phi_3(D)= \left\{\frac{1}{x}\,:\, x\in \pi^kO_K\right\}=\pi^{-k+1} O_K^C,$$
where $O_K^C$ is the complement of $O_K$ in $K\cup \{\infty\}$.

$(ii)$ $0\notin D$.

Here:
\begin{equation*}
    \begin{split}
     \phi_3(D)&=\left\{\frac{1}{a+ \pi^k y}\, :\, y\in O_K\right\}
        = \left\{\frac{1}{a(1+ \pi^{\nu_{\pi}(1/a)+ k} y')} :\, y'\in O_K\right\}\\
         &= \left\{1/a \cdot(1+ \pi^{\nu_{\pi}(1/a)+ k} z)\, |\, z\in O_K\right\}
       = \left\{1/a+ \pi^{2\nu_{\pi}(1/a)+ k} z'\, |\, z'\in O_K\right\}\\
       &= 1/a+ \pi^{2\nu_{\pi}(1/a)+ k}O_K.
    \end{split}
\end{equation*}

\section{Examples}\label{examples section}
Our first example presents the closed subgroups of $U$ and $U(\sqrt{d})$, generated by various elements.
\begin{example}\emph{
\begin{enumerate}
\item Let $\lambda\in \Z_3$ be the square root of $244= 1+ 3^5$, that is $2$ modulo 3:
$$\lambda= -\exp_3(1/2\cdot \log_3(244)).$$
The order of $\lambda$ modulo $3$ is $l=2$, and $\lambda^2= 1+ 3^5$, so that $k=5$. By Theorem \ref{prop, orbit closure of 1 in Qp}:
$$\overline{\{\lambda^n\,|\, n\in \N\}}= \bigcup_{i=0}^1 \lambda^i \cdot (1+ 3^5\Z_3)= \pm1+ 3^5\Z_3.$$
\item Let $\lambda= 2+ \sqrt{5}\in \Q_3(\sqrt{5})$. The order of $\lambda$ modulo 3 is $l=8$, and $\lambda^8= 1+ 3^2(5760+ 25184\cdot \sqrt{5})$, so that $k=2$. By Theorem \ref{prop, orbit of 1 in ramified extension, under T(z)=lambda z}, 
$$\overline{\{\lambda^n\,|\, n\in \N\}}= \bigcup_{i=0}^7 \lambda^i\cdot U_{2}^{0}(\sqrt{5}),$$
which is a subgroup of index 3 of $U^0(\sqrt{5})$.
\item Let $\lambda= 2+ \sqrt{5}\in \Q_5(\sqrt{5})$. The order of $\lambda$ modulo $\sqrt{5}$ is $l=4$, and $\lambda^{4}=1 + \sqrt{5}(72 + 32\sqrt{5})$, so that $k=1$. By Theorem \ref{prop, orbit of 1 in extension2, under T(z)=lambda z}:
$$\overline{\{\lambda^n\,|\, n\in \N\}}= U^0(\sqrt{5}).$$
\end{enumerate}
}
\end{example}
\newpage
\begin{example}\label{repeted root example}\emph{By Theorem \ref{theorem, when single root}:
\begin{enumerate}
\item For $a_n= (1+ 3 n)\cdot 4^n$ over $\Q_3$: 
\begin{equation*}
\begin{split}
K&= 7 + 3^{2}\Z_3. 
\end{split}   
\end{equation*}
\item For $a_n= (1+n)\cdot4^n$ over $\Q_3$:
\begin{equation*}
\begin{split}
K&= 4(1+ 3\Z_3)^C  = (4 +3\Z_3)^C = (1 +3\Z_3)^C= 1 +3\Z_3^C.
\end{split}   
\end{equation*}
\end{enumerate}
}
\end{example}
The following three examples deal with the Kepler sets of some recurrence sequences, when the roots $\lambda_1$ and $\lambda_2$ belong to $\Q_p$ (Example \ref{example when root in Qp}), to an unramified extension (Example \ref{example of Kepler set in unramifed}), or to a ramified extension (Example \ref{example of Kepler set in ramifed}).
\begin{example}\label{example when root in Qp}\emph{
\begin{enumerate}
\item For $a_n= 1+ 4^n$ over $\Q_3$, by Theorem \ref{theorem for p-adic Kepler set of order 2 when roots are in Q_p}.$(1)$:
\begin{equation*}
\begin{split}
K&= 7 + 3^{2}\Z_3.
\end{split}   
\end{equation*}
\item For $a_n= -1+ 4^n$ over $\Q_3$, by Theorem \ref{theorem for p-adic Kepler set of order 2 when roots are in Q_p}.$(2)$:
\begin{equation*}
\begin{split}
K= (1+ 3\Z_3)^C=1+ 3\Z_3^C.
\end{split}   
\end{equation*}
\item For $a_n= -1+ 8^n$ over $\Q_3$, by Theorem \ref{theorem for p-adic Kepler set of order 2 when roots are in Q_p}.$(3)$:
\begin{equation*}
\begin{split}
K= 3^2\Z_3\cup \frac{1}{3}\cdot\Z_3^C.
\end{split}   
\end{equation*}
\end{enumerate}}
\end{example}

\begin{example}\label{example of Kepler set in unramifed}\emph{
\begin{enumerate}
\item For $a_n= (1-3\sqrt{2})(9+ \sqrt{2})^n + (1+3\sqrt{2})(9- \sqrt{2})^n$ over $\Q_3$, by Theorem \ref{theorem for p-adic Kepler set of order 2 when roots are in Q_p}.$(1)$:
\begin{equation*}
\begin{split}
K= (2/3+ \Z_3) \cup (3+ 3^2\Z_3).
\end{split}   
\end{equation*}
\item For $a_n= \frac{1}{3\sqrt{5}}\left(\frac{1+ 3\sqrt{5}}{2}\right)^n-  \frac{1}{3\sqrt{5}}\left(\frac{1- 3\sqrt{5}}{2}\right)^n$ over $\Q_3$, by Theorem \ref{theorem for p-adic Kepler set of order 2 when roots are in Q_p}.$(2)$:
\begin{equation*}
\begin{split}
K= (2+ 3\Z_p)^C.
\end{split}   
\end{equation*}
\item For $a_n= \frac{1}{\sqrt{2}}\left(9+\sqrt{2}\right)^n-  \frac{1}{\sqrt{2}}\left(9-\sqrt{2}\right)^n$ over $\Q_3$, by Theorem \ref{theorem for p-adic Kepler set of order 2 when roots are in Q_p}.$(3)$:
\begin{equation*}
\begin{split}
K= 3^2\Z_3\cup \frac{1}{3}\cdot \Z_3^C.
\end{split}   
\end{equation*}
\end{enumerate}}
\end{example}

\begin{example}\label{example of Kepler set in ramifed}\emph{
\begin{enumerate}
\item For $a_n= (1+ \sqrt{3})^n + (1-\sqrt{3})^n$ over $\Q_3$, by Theorem \ref{theorem for p-adic Kepler set2 of order 2 when roots are in Q_p}.$(1)$:
\begin{equation*}
\begin{split}
K= 1+ 3\Z_3.
\end{split}   
\end{equation*}
\item For $a_n= \left(\frac{1+ 5^2\sqrt{5}}{2}\right)^n- \left(\frac{1- 5^2\sqrt{5}}{2}\right)^n$ over $\Q_5$, by Theorem \ref{theorem for p-adic Kepler set2 of order 2 when roots are in Q_p}.$(2)$:
\begin{equation*}
\begin{split}
K= (3+ 5\Z_5)^C.
\end{split}   
\end{equation*}
\item For $a_n= (3+ \sqrt{3})^n + (3- \sqrt{3})^n$ over $\Q_3$, by Theorem \ref{theorem for p-adic Kepler set2 of order 2 when roots are in Q_p}.$(3)$:
\begin{equation*}
\begin{split}
K= \Q_3.
\end{split}   
\end{equation*}
\end{enumerate}}
\end{example}

\section{Proofs}
\subsection{Proof of Theorem \ref{prop, orbit closure of 1 in Qp}}
We have 
\begin{equation}\label{equa, of lambda to power mn}
\begin{split}
\lambda^{ln}&= \exp_p\left(\log_p(\lambda^{ln})\right)=\exp_p\left( n\log_p(\lambda^{l})\right)= \exp_p\left( n\log_p(1+ \mu p^k)\right),\\
\end{split}    
\end{equation}
where $\mu$ is such that $\lambda^{l}=1+ p^k\mu$ and $p\nmid \mu$.
Put $b= p^{-k}\log_p(1+\mu p^k)$, and note that $|b|_p=1$.   
By \eqref{equa, of lambda to power mn}, we get
 \begin{equation}\label{equa, of lambda to power mn closer}
    \begin{split}
        \overline{\{\lambda^{ln}\,|\, n\in\N\}}&= \left\{\exp_p\left( x p^k b\right)\,|\, x\in \Z_p\right\}=\exp_p(p^k\Z_p)= 1+ p^k\Z_p.\\
    \end{split}    
    \end{equation}
Clearly, the sets $\lambda^i+ p^k \Z_p$, $0\leq i\leq l-1$, are pairwise disjoint.   
Hence,
$$\overline{\{\lambda^{n}\,|\, n\in \N}\}= \bigsqcup_{i=0}^{l-1}(\lambda^i + p^k\Z_p).$$
\vspace{1cm}
\subsection{Proof of Theorem \ref{prop, orbit of 1 in ramified extension, under T(z)=lambda z}}

Let $z= 1+ p^kw \in U^{0}_k(\sqrt{d})$ with $p\nmid w$.
Suppose that the order of $z$ modulo $p^n$ is $m$. Since
$(1+ p^k w)^m\equiv1\, (\textup{mod}\ p^n)$, we have
$$mp^k w +\binom{m}{2}(wp^{k})^2+ \cdots+ \binom{m}{m}(p^k w)^m\equiv0\, (\textup{mod}\ p^n),$$
so that $p^{n-k}| m$. By Lemma \ref{monothetic group lemma unramified case}.$(2)$, we have 
$$|U^{0}_k(\sqrt{d})/U^{0}_n(\sqrt{d})|= p^{n-k}, \qquad n\geq k\geq 1.$$
Thus $m= p^{n-k}$.
It follows from \cite[Lemma 2.4.1]{Ribes} that the closed subgroup generated by $z$ is $U^{0}_k(\sqrt{d})$. In particular, for $z=\lambda^l$ we have
$$\overline{\{\lambda^{ln}\,|\,n\in \N\}}= U^{0}_k(\sqrt{d}).$$
Denote $W_i= (\lambda^i+ p^k\Z_p[\sqrt{d}])\bigcap U^{0}(\sqrt{d})$, $0\leq i\leq l-1$.
We have
\begin{equation}
\begin{split}
W_i&= \{y= \lambda^i + p^k (a+ b\sqrt{d})\, |\, a,b \in \Z_p,\, N(y)= \pm 1\}\\
&= \{y= \lambda^i(1 + p^k (a_1+ b_1\sqrt{d}))\, |\, a_1,b_1 \in \Z_p,\, N(y)= \pm 1\}\\
&= \lambda^i\cdot \{y_1= 1 + p^k (a_1+ b_1\sqrt{d})\, |\, a_1,b_1 \in \Z_p,\, N(y_1)= 1\}\\
&= \lambda^i\cdot U_k^0(\sqrt{d}),
\end{split}
\end{equation}
where the third equality is due to the fact that $\lambda \in U^0(\sqrt{d})$. Clearly, the sets $W_i$, $0\leq i\leq l-1$, are pairwise disjoint.
Hence, 
\begin{equation}\label{proof sec, equation of Wi in orbit theorem}
\overline{\{\lambda^{n}\,|\, n\in \N\}}= \bigsqcup_{i=0}^{l-1}W_i= \bigsqcup_{i=0}^{l-1}(\lambda^i+ p^k\Z_p[\sqrt{d}])\bigcap U^{0}(\sqrt{d}).    
\end{equation}
By Lemma \ref{monothetic group lemma unramified case}.$(1)$ and \eqref{proof sec, equation of Wi in orbit theorem},
if $k=1$ and $l=2(p+1)$, then
$$\overline{\{\lambda^{n}\,|\, n\in \N\}}=U^{0}(\sqrt{d}).$$
\vspace{0.25cm}

\subsection{Proof of Theorem \ref{prop, orbit of 1 in extension2, under T(z)=lambda z}}
We have $z = 1 + \pi^{k} \mu$ with $k>0, \pi\nmid \mu$.
Distinguish between two cases:
\vspace{0.25cm}

\noindent\underline{Case 1.}  $k$ is even, say, $k= 2s$ for some integer $s\geq 1$.
\vspace{0.25cm}

Let $z= 1+ \pi^{2 s}w \in U^{0}_{2s}(\sqrt{d})$ with $\pi\nmid w$.
Suppose that the order of $z$ modulo $\pi^{2n}$ is~$m$. Since $(1+ \pi^{2s} w)^m\equiv1(\textup{mod}\ \pi^{2n})$, we have
$$m \pi^{2s}w +\binom{m}{2}(\pi^{2s}w)^2+ \cdots+ \binom{m}{m}(\pi^{2s}w)^m\equiv0\ (\textup{mod}\ \pi^{2n}),$$
so that $p^{n-s}| m$. By Lemma \ref{monothetic group lemma ramified case}, we have
$$\left|U^{0}_{2s}(\sqrt{d})/U^{0}_{2n}(\sqrt{d})\right|= p^{n-s}, \qquad n\geq s\geq 1.$$
Thus, $m= p^{n-s}$. It follows from \cite[Lemma 2.4.1]{Ribes} that the closed subgroup generated by $z$ is $U^{0}_{2s}(\sqrt{d})$. In particular, for $z=\lambda^l$ we have
$$\overline{\{\lambda^{ln}\,|\,n\in \N\}}= U^{0}_{2s}(\sqrt{d}).$$
Denote $W_i= (\lambda^i+ \pi^{2s}\Z_p[\sqrt{d}])\bigcap U^{0}(\sqrt{d})$.
Clearly, $W_i= \lambda^iU^{0}_{2s}(\sqrt{d})$, and the sets $W_i, 0\leq i\leq l-1$ are pairwise disjoint.
Hence,
$$\overline{\{\lambda^{n}\,|\, n\in \N\}}= \bigsqcup_{i=0}^{l-1}W_i= \bigsqcup_{i=0}^{l-1}(\lambda^i+ \pi^{2s}\Z_p[\sqrt{d}])\bigcap U^{0}(\sqrt{d}).$$
\vspace{0.25cm}

\noindent\underline{Case 2.}  $k$ is odd, say, $k= 2s+1$ for some integer $s\geq 0$.
\vspace{0.25cm}

The proof in this case is very similar to that in the first one, except that we need to replace $U_{2s}(\sqrt{d})$, $U_{2n}(\sqrt{d})$, and Lemma \ref{monothetic group lemma ramified case} by $U_{2s+1}(\sqrt{d})$, $U_{2n +1}(\sqrt{d})$, and Lemma \ref{monothetic group lemma ramified case second}, respectively.

By Lemma \ref{monothetic group lemma ramified case second}.$(1)$ and \eqref{result, equation of monothetic group in ramified case},
if $k=1$ and $l=4$, then
$$\overline{\{\lambda^{n}\,|\, n\in \N\}}=U^{0}(\sqrt{d}).$$
\vspace{0.25cm}

\subsection{Proof of Theorem \ref{theorem, when single root}}
% \begin{proof}[{Proof of Theorem \ref{theorem, when single root}}]
Let $c= c_1/c_2$. We have:
\begin{equation}\label{proof sec, ratio for single root}
\begin{split}
 \frac{a_{n+1}}{a_n}= \frac{(c_1+ (n+1) c_2)\lambda^{n+1}}{(c_1+ n c_2)\lambda^n}
    =\lambda \left(1+ \frac{1}{c_1/c_2+ n}\right)= \lambda\left(1+ \frac{1}{c+ n}\right).
\end{split}
\end{equation}
Define a Möbius transformation $T$ of  $\Q_p\cup \{\infty\}$ by 
\begin{equation}\label{proof sec, equation of T for repeated root}
   T(z)=  \lambda\left(1+ \frac{1}{c+ z}\right),\qquad z\in \Q_p\cup \{\infty\}.
\end{equation}
Thus, 
$$ K= T(\Z_p)= \lambda\left(1+ \frac{1}{c + \Z_p}\right).$$
\begin{enumerate}
    \item If $|c|_p>1$, then $1/c \in p\Z_p$, and therefore, 
\begin{equation}
    \begin{split}
       K &= \lambda\left(1+ 1/ c\cdot\frac{1}{1+ (1/c)\cdot \Z_p}\right)
        =\lambda\left(1+ 1/ c \cdot (1+ (1/c)\cdot \Z_p)\right)\\
        &= \lambda \left(1+ 1/ c +(1/c)^2\cdot \Z_p\right).\\
    \end{split}
\end{equation}
\item If $|c|_p\leq1$, then $c\in \Z_p$, and therefore,
\begin{equation}
    \begin{split}
        K&= \lambda\left(1+ \frac{1}{\Z_p}\right)
        = \lambda\left(1+ p\Z_p^C\right)
       = \lambda \left(1 + p\Z_p\right)^C.
    \end{split}
    \end{equation}
\end{enumerate}
\vspace{1cm}

\subsection{Proof of Theorem \ref{theorem for p-adic Kepler set of order 2 when roots are in Q_p} for $\lambda_1, \lambda_2 \in \Q_p$}
% \begin{proof}[{Proof of Theorem \ref{theorem for p-adic Kepler set of order 2 when roots are in Q_p}}]
Let $c= c_1/c_2$. We have 
\begin{equation}\label{proof sec, ratio in the case when roots are in Q_p}
\begin{split}
    \frac{a_{n+1}}{a_n}= \frac{c_1\lambda_1^{n+1}+ c_2\lambda_2^{n+1}}{c_1\lambda_1^n + c_2\lambda_2^n}
    = \frac{c \lambda_1+ \lambda_2 (\lambda_2/\lambda_1)^n}{c+ (\lambda_2/\lambda_1)^n}.
\end{split}
\end{equation}
By Theorem \ref{prop, orbit closure of 1 in Qp},
\begin{equation}\label{proof sec, equation of orbit in the case of root in Q_p}
\overline{\{(\lambda_2/\lambda_1)^n\,|\, n\in\N\}} = \bigsqcup_{i=0}^{l-1} D_i,
\end{equation}
where 
$$D_i= (\lambda_2/\lambda_1)^i+ p^k \Z_p, \qquad 0\leq i\leq l-1.$$
Therefore, the Kepler set of $(a_n)_{n=0}^\infty$ is 
\begin{equation*}
\begin{split}
K= \left\{ \frac{c\lambda_1 + \lambda_2 z}{c+ z}\,: z\in \bigsqcup_{i=0}^{l-1} D_i\right\}.
\end{split}
\end{equation*}
In other words,
\begin{equation}\label{proof sec, Kepler set as the image of the union of Di}
K= T\left(\bigsqcup_{i=0}^{l-1} D_i\right)    
\end{equation}
where $T$ is the $p$-adic Möbius transformation given by 
\begin{equation}\label{proof sec, p-adic mob when root are in Q_p}
 T(z)=  \frac{c\lambda_1 + \lambda_2 z}{c+ z}= \lambda_2- \frac{c(\lambda_2-\lambda_1)}{c+ z}, \qquad z\in \Q_p\cup \{\infty\}.  
\end{equation}
Suppose first that $-c\notin D_i$, which happens for all $i$ in the first case, and for all $i\neq s$ in the range $[0,l-1]$ in the third case. Then $ \nu_p(c+ (\lambda_2/\lambda_1)^i)\leq k-1$. By \eqref{proof sec, p-adic mob when root are in Q_p} we have
\begin{equation}\label{proof sec, image of D_1 under T root in q_p}
\begin{split}
    T(D_i)& = T\left((\lambda_2/\lambda_1)^i + p^k \Z_p\right)
    = \lambda_2 - c(\lambda_2- \lambda_1)\cdot\frac{1}{c+ (\lambda_2/\lambda_1)^i + p^k \Z_p}\\
    &= \lambda_2 - \frac{c(\lambda_2-\lambda_1)}{c+ (\lambda_2/\lambda_1)^i}\cdot \frac{1}{1+ p^{-\nu_p\left(c+(\lambda_2/\lambda_1)^i\right)+k}\Z_p}\\
    &= \lambda_2 - \frac{c(\lambda_2-\lambda_1)}{c+ (\lambda_2/\lambda_1)^i}\cdot \left(1+ p^{-\nu_p\left(c+(\lambda_2/\lambda_1)^i\right)+k}\Z_p\right)\\
    &= \lambda_2 - \frac{c(\lambda_2-\lambda_1)}{c+ (\lambda_2/\lambda_1)^i} +p^{\nu_p(c(\lambda_2-\lambda_1))-2\nu_p\left(c+(\lambda_2/\lambda_1)^i\right)+k}\Z_p\\
    &= \frac{a_{i+1}}{a_{i}} + p^{\nu_p(c)+ \nu_p(\lambda_2-\lambda_1)-2\nu_p\left(c+(\lambda_2/\lambda_1)^i\right)+k}\Z_p.
\end{split}
\end{equation}
\begin{enumerate}
\item The proof follows immediately from \eqref{proof sec, Kepler set as the image of the union of Di} and \eqref{proof sec, image of D_1 under T root in q_p}.
\item Since $-c \in D_0$, we have $c+1\in p^k\Z_p$ and $\nu_p(c)=0$. Clearly, $\nu_p(\lambda_1/\lambda_2-1)=k$, with $k\geq 1$, and therefore $\lambda_1 \equiv \lambda_2\,(\textup{mod}\ p^{\nu_p(\lambda_2)+k})$. By \eqref{proof sec, Kepler set as the image of the union of Di} and \eqref{proof sec, p-adic mob when root are in Q_p} we have
\begin{equation}
\begin{split}
    K&= T(D_0)= \lambda_2- c(\lambda_2-\lambda_1)\cdot \frac{1}{c+ 1 + p^k \Z_p}= \lambda_2- c(\lambda_2-\lambda_1)\cdot \frac{1}{p^k \Z_p}\\
    &= \lambda_2+c\lambda_2\cdot \frac{1}{ \Z_p}=\lambda_2+ \lambda_2 p\Z_p^C= (\lambda_2+ \lambda_2p\Z_p)^C\\
    &= \left(\frac{\lambda_1+ \lambda_2}{2}+ p^{\nu_p(\lambda_2)+1}\Z_p\right)^C.\\
\end{split}
\end{equation}
\item In this case, $-c \in D_s$, but $-c\notin D_i$ for all other $i$ in the range $[0,l-1]$. The elements of $D_s$ are units, so that $\nu_p(c)=0$. By the definition of $l$ and $s$, for every $i\neq s$ in the range $[0, l-1]$ we have $(\lambda_2/\lambda_1)^i \not \equiv (\lambda_2/\lambda_1)^s \equiv -c\, (\textup{mod}\ p)$, and therefore $\nu_p(c+ (\lambda_2/\lambda_1)^i)= 0$.
By \eqref{proof sec, image of D_1 under T root in q_p} we have
\begin{equation}\label{proof sec, image of D_i under T root in q_p c_1/c_2 in Ds2}
\begin{split}
T(D_i)&= \frac{a_{i+1}}{a_{i}} +p^{\nu_p(\lambda_2-\lambda_1)+k}\Z_p,
\end{split} \qquad 0\leq i\leq l-1,\, i\neq s.
\end{equation}
Since $c+ (\lambda_2/\lambda_1)^s \in p^k\Z_p$ and $\nu_p(\lambda_1/\lambda_2-1)=0$, we obtain
\begin{equation}\label{proof sec, image of D_s under T root in q_p c_1/c_2 in Ds}
\begin{split}
  T(D_s)& = T\left((\lambda_2/\lambda_1)^s + p^k \Z_p\right)
    = \lambda_2 - c(\lambda_2- \lambda_1)\cdot\frac{1}{c+ (\lambda_2/\lambda_1)^s + p^k \Z_p}\\
    &= \lambda_2 -c(\lambda_2-\lambda_1)\cdot\frac{1}{p^k \Z_p}=\lambda_2+ \lambda_2\cdot\frac{1}{p^k \Z_p}= \lambda_2\left(1+ \frac{1}{p^{k-1}}\Z_p^C\right)\\
   &= p^{\nu_p(\lambda_2)+1-k}\Z_p^C.\\
    \end{split}
\end{equation}
The proof follows from \eqref{proof sec, Kepler set as the image of the union of Di}, \eqref{proof sec, image of D_i under T root in q_p c_1/c_2 in Ds2}, and \eqref{proof sec, image of D_s under T root in q_p c_1/c_2 in Ds}.
\end{enumerate}
\vspace{1cm}
\subsection{Proof of Theorem \ref{theorem for p-adic Kepler set of order 2 when roots are in Q_p} for $\lambda_1, \lambda_2 \notin \Q_p$}
% \begin{proof}[{Proof of Theorem \ref{theorem for p-adic Kepler set of order 2 when roots are in Q_p}}]
Clearly, $c_1$ and $c_2$ are conjugate over $\Q_p$. Therefore, $\nu_p(c_1/c_2)=0$ and $N(c_1/c_2)=1$. Let $c= c_1/c_2$. We have 
\begin{equation}\label{proof sec, ratio in the case when roots are not Q_p}
\begin{split}
    \frac{a_{n+1}}{a_n}= \frac{c_1\lambda_1^{n+1}+ c_2\lambda_2^{n+1}}{c_1\lambda_1^n + c_2\lambda_2^n}
    = \frac{c\lambda_1+ \lambda_2 (\lambda_2/\lambda_1)^n}{c+ (\lambda_2/\lambda_1)^n}.
\end{split}
\end{equation}
Denote 
$$D_i= (\lambda_2/\lambda_1)^i + p^k \Z_p[\sqrt{d}], \qquad 0\leq i\leq l-1.$$
By the proof of Theorem \ref{prop, orbit of 1 in ramified extension, under T(z)=lambda z} we have
\begin{equation*}
    \overline{\left\{(\lambda_2/\lambda_1)^{ln +i}\,|\, n\in \N\right\}}= W_i,\qquad 0\leq i \leq l-1,
\end{equation*}
where 
\begin{equation}\label{proof sec, equation od Wi}
 W_i= \left((\lambda_2/\lambda_1)^i + p^k\Z_p[\sqrt{d}]\right)\cap U^{0}(\sqrt{d})= D_i \cap U^{0}(\sqrt{d}),   
\end{equation} 
and the sets $W_i$ are pairwise disjoint.
Therefore, the Kepler set of $(a_n)_{n=0}^\infty$ is
\begin{equation*}
K= \left\{ \frac{c\lambda_1 + \lambda_2 z}{c+ z}\,: z\in \bigsqcup_{i=0}^{l-1}W_i\right\}.    
\end{equation*}
In other words,
\begin{equation}\label{proof sec, image od union of Wi under T}
K= T\left(\bigsqcup_{i=0}^{l-1}W_i\right),
\end{equation}
where $T$ is the $p$-adic Möbius transformation given by
\begin{equation}\label{proof sec, p-adic mob when root are not in Q_p}
 T(z)=  \frac{c\lambda_1 + \lambda_2 z}{c+ z}= \lambda_2- \frac{c(\lambda_2-\lambda_1)}{c+ z}, \qquad z\in \Q_p(\sqrt{d})\cup \{\infty\}.  
\end{equation}
Suppose that $-c\notin W_i$, which happens for all $i$ in the first case, and for all $i\neq s$ in the range $[0,l-1]$ in the third case. Thus, $ \nu_p(c+ (\lambda_2/\lambda_1)^i)\leq k-1$. By \eqref{proof sec, p-adic mob when root are not in Q_p} we have
\begin{equation}\label{proof sec, image of D_i under T root in not in Q_p}
\begin{split}
    T(D_i)& = T\left((\lambda_2/\lambda_1)^i + p^k \Z_p[\sqrt{d}]\right)
    = \lambda_2 - \frac{c(\lambda_2- \lambda_1)}{c+ (\lambda_2/\lambda_1)^i + p^k \Z_p[\sqrt{d}]}\\
    &= \lambda_2- \frac{c(\lambda_2- \lambda_1)}{c+ (\lambda_2/\lambda_1)^i}\cdot \frac{1}{1+ p^{-\nu_p\left(c+ (\lambda_2/\lambda_1)^i\right)+k}  \Z_p[\sqrt{d}]}\\
    &= \lambda_2- \frac{c(\lambda_2- \lambda_1)}{c+ (\lambda_2/\lambda_1)^i}\cdot \left( 1+ p^{-\nu_p\left(c+ (\lambda_2/\lambda_1)^i\right)+k}  \Z_p[\sqrt{d}]\right)\\
    &= \lambda_2- \frac{c(\lambda_2- \lambda_1)}{c+ (\lambda_2/\lambda_1)^i} +  p^{\nu_p(c(\lambda_2-\lambda_1))-2\nu_p\left(c+ (\lambda_2/\lambda_1)^i\right)+k} \Z_p[\sqrt{d}]\\
    &= \frac{a_{i+1}}{a_i}+  p^{\nu_p(\lambda_2-\lambda_1)-2\nu_p\left(c+ (\lambda_2/\lambda_1)^i\right)+k} \Z_p[\sqrt{d}].
\end{split}
\end{equation}
By \eqref{proof sec, equation od Wi} and \eqref{proof sec, image of D_i under T root in not in Q_p}, we get
\begin{equation}\label{proof sec, unramified image of Wik}
\begin{split}
 T(W_i)&\subseteq T(D_i)\cap \Q_p=  \frac{a_{i+1}}{a_i}+ p^{\nu_p(\lambda_1-\lambda_2)-2\nu_p\left(c+\left(\lambda_2/\lambda_1\right)^i\right)+k}  \Z_p.
\end{split}
\end{equation}
We claim that the inclusion in \eqref{proof sec, unramified image of Wik} is actually an  equality. In fact, let $x$ belong to the right-hand side of \eqref{proof sec, unramified image of Wik}. By \eqref{proof sec, image of D_i under T root in not in Q_p}, there exists some $y\in D_i$ such that $T(y)= x$. Using~\eqref{proof sec, p-adic mob when root are not in Q_p}, we obtain
\begin{equation*}
\begin{split}
 y= -c(\lambda_1-x)/(\lambda_2-x).
\end{split}
\end{equation*}
Since $x\in \Q_p$, the elements $\lambda_1-x$ and $\lambda_2-x$ are conjugate over $\Q_p$, so that
\begin{equation*}
    \begin{split}
    N(y)&= N\left(\frac{-c(\lambda_1-x)}{\lambda_2-x}\right)
    = N\left(c\right)\cdot N\left(\frac{\lambda_1-x}{\lambda_2-x}\right)
    =1.
    \end{split}
\end{equation*}
Therefore, $y\in W_i$, so that 
\begin{equation}\label{proof, sec, image of Wi,k under T}
T(W_i)= \frac{a_{i+1}}{a_i}+p^{\nu_p(\lambda_1-\lambda_2)-2\nu_p\left(c+\left(\lambda_2/\lambda_1\right)^i\right)+k}  \Z_p.    
\end{equation}

\begin{enumerate}
\item The proof in this case follows immediately from \eqref{proof sec, image od union of Wi under T} and \eqref{proof, sec, image of Wi,k under T}.
\item By the choice of $l$, we have $k\geq 1$ and $\lambda_1 \equiv \lambda_2\,(\textup{mod}\ p^{\nu_p(\lambda_2)+k})$.
Since $-c \in W_0$, we have
\begin{equation}\label{proof sec, equation of Kepler set for m=1, second}
\begin{split}
 T(D_0)& = \lambda_2 - c(\lambda_2- \lambda_1)\cdot \frac{1}{c+ 1 + p^k \Z_p[\sqrt{d}]}\\
 &= \lambda_2 - c(\lambda_2 -\lambda_1)\cdot \frac{1}{p^k \Z_p[\sqrt{d}]}=  \lambda_2 + \lambda_2p^{\nu_p(\lambda_1/\lambda_2- 1) -k+ 1} \Z_p[\sqrt{d}]^C\\
 &= \lambda_2 + \lambda_2 \cdot p\Z_p[\sqrt{d}]^C=\frac{\lambda_1+\lambda_2}{2}+ p^{\nu_p(\lambda_2)+1}\Z_p[\sqrt{d}]^C\\
 &=\left(\frac{\lambda_1+\lambda_2}{2}+ p^{\nu_p(\lambda_2)+1}\Z_p[\sqrt{d}]\right)^C.\\
\end{split}
\end{equation}
By \eqref{proof sec, equation od Wi}, \eqref{proof sec, image od union of Wi under T}, and \eqref{proof sec, equation of Kepler set for m=1, second}, we obtain
\begin{equation}\label{proof sec, Kepler set as a subset for m=1}
\begin{split}
K &= T(W_0)\subseteq  T(D_0) \cap \Q_p\\
&=\left(\frac{\lambda_1+\lambda_2}{2}+ p^{ \nu_p(\lambda_2)+1}\Z_p\right)^C.\\
\end{split}
\end{equation}
As above, it can be easily seen that the inclusion in \eqref{proof sec, Kepler set as a subset for m=1} is actually an equality:
\begin{equation*}
\begin{split}
K &= \left(\frac{\lambda_1+\lambda_2}{2}+ p^{\nu_p(\lambda_2)+1}\Z_p\right)^C.
\end{split}
\end{equation*}
\item Recall that $-c\in W_s$, but $-c\notin W_i$ for all other $i$ in the range $[0,l-1]$. The elements of $W_s$ are units, so that $v_p(c)=0$. By the definition of $l$ and $s$, for every $i\neq s$ in the range $[0, l-1]$ we have $(\lambda_2/\lambda_1)^i \not \equiv (\lambda_2/\lambda_1)^s \equiv -c\,(\textup{mod}\ p)$, and therefore $v_p(c+ (\lambda_2/\lambda_1)^i)= 0$.
By \eqref{proof, sec, image of Wi,k under T}, we have
\begin{equation}\label{proof sec, image of Wi}
\begin{split}
T(W_i) &= \frac{a_{i+1}}{a_i}+ p^{\nu_p(\lambda_1-\lambda_2)+k}\Z_p,
\end{split} \qquad 0\leq i\leq l-1,\, i\neq s.
\end{equation}
Since $c+ (\lambda_2/\lambda_1)^s\in p^k\Z_p[\sqrt{d}]$ and $v_p(\lambda_1/\lambda_2-1)=0$, we get
\begin{equation}\label{proof sec, image of D_s under T root not in q_p c_1/c_2 in O}
\begin{split}
  T(D_s)& = T\left((\lambda_2/\lambda_1)^s + p^k \Z_p[\sqrt{d}]\right)\\
    &= \lambda_2 - c(\lambda_2- \lambda_1)\cdot\frac{1}{c+ (\lambda_2/\lambda_1)^s + p^k \Z_p[\sqrt{d}]}\\
    &= \lambda_2 -c(\lambda_2-\lambda_1)\cdot\frac{1}{p^k \Z_p[\sqrt{d}]}= \lambda_2+ \lambda_2 \frac{p^{v_p(\lambda_1/\lambda_2-1)}}{p^{k-1}}\cdot\Z_p[\sqrt{d}]^C\\
    &= \lambda_2 + \frac{\lambda_2}{p^{k-1}}\cdot \Z_p[\sqrt{d}]^C= \lambda_2\left(1 + \frac{1}{p^{k-1}}\Z_p[\sqrt{d}]^C\right)\\
    &= p^{\nu_p(\lambda_2)+1-k} \Z_p[\sqrt{d}]^C.
    \end{split}
\end{equation}

By \eqref{proof sec, equation od Wi} and \eqref{proof sec, image of D_s under T root not in q_p c_1/c_2 in O}, we obtain
\begin{equation}\label{proof sec, unramified case image of W_s under T} 
\begin{split}
    T(W_s)&\subseteq T(D_s) \cap \Q_p
    =p^{\nu_p(\lambda_2)+1-k} \Z_p^C.
\end{split}
\end{equation}
As above, the inclusion in \eqref{proof sec, unramified case image of W_s under T} is actually an equality:
\begin{equation}\label{proof sec, equality for Ws}
  \begin{split}
    T(W_s)
    &= p^{\nu_p(\lambda_2)+1-k} \Z_p^C.
\end{split}
\end{equation}
The proof follows from \eqref{proof sec, image od union of Wi under T}, \eqref{proof sec, image of Wi}, and \eqref{proof sec, equality for Ws}.
\end{enumerate}
\vspace{1cm}
\subsection{Proof of Theorem \ref{theorem for p-adic Kepler set2 of order 2 when roots are in Q_p}}
Recall that $c_1$ and $c_2$ are conjugate over $\Q_p$, and therefore, $\nu_p(c_1/c_2)=0$ and $N(c_1/c_2)=1$. Let $c= c_1/c_2$. We have 
\begin{equation*}\label{proof sec, ratio in the ramified}
\begin{split}
    \frac{a_{n+1}}{a_n}= \frac{c_1\lambda_1^{n+1}+ c_2\lambda_2^{n+1}}{c_1\lambda_1^n + c_2\lambda_2^n}
    = \frac{c\lambda_1+ \lambda_2 (\lambda_2/\lambda_1)^n}{c+ (\lambda_2/\lambda_1)^n}.
\end{split}
\end{equation*}
Denote 
$$D_i= (\lambda_2/\lambda_1)^i + \pi^k \Z_p[\sqrt{d}], \qquad 0\leq i\leq l-1.$$
By the proof of Theorem \ref{prop, orbit of 1 in extension2, under T(z)=lambda z}, we have
\begin{equation*}
    \overline{\left\{(\lambda_2/\lambda_1)^{ln +i}\,|\, n\in \N\right\}}= W_i,\qquad 0\leq i \leq l-1,
\end{equation*}
where 
\begin{equation}\label{proof sec, equation of Wi}
W_i= \left((\lambda_2/\lambda_1)^i + \pi^k\Z_p[\sqrt{d}]\right)\cap U^{0}(\sqrt{d})= D_i \cap U^{0}(\sqrt{d}),
\end{equation}
and the sets $W_i$ are pairwise disjoint.
Therefore, the Kepler set of $(a_n)_{n=0}^\infty$ is
\begin{equation*}
K= \left\{ \frac{c\lambda_1 + \lambda_2 z}{c+ z}\,: z\in \bigsqcup_{i=0}^{l-1}W_i\right\}.    
\end{equation*}
In other words,
\begin{equation}\label{proof sec, image od union of Wi under T in ramified case}
K= T\left(\bigsqcup_{i=0}^{l-1}W_i\right),
\end{equation}
where $T$ is the $p$-adic Möbius transformation given by
\begin{equation}\label{proof sec, p-adic mob in ramified case}
 T(z)=  \frac{c\lambda_1 + \lambda_2 z}{c+ z}= \lambda_2- \frac{c(\lambda_2-\lambda_1)}{c+ z}, \qquad z\in \Q_p(\sqrt{d})\cup \{\infty\}.  
\end{equation}
Suppose that $-c\notin W_i$, which happens for all $i$ in the first case, and for all $i\neq s$ in the range $[0,l-1]$ in the third case. Thus, $ \nu_{\pi}(c+ (\lambda_2/\lambda_1)^i)\leq k-1$. By \eqref{proof sec, p-adic mob in ramified case}, we get
\begin{equation}\label{proof sec, image of D_i in ramified case}
\begin{split}
    T(D_i)& = T\left((\lambda_2/\lambda_1)^i + \pi^k \Z_p[\sqrt{d}]\right)
    = \lambda_2 - \frac{c(\lambda_2- \lambda_1)}{c+ (\lambda_2/\lambda_1)^i + \pi^k \Z_p[\sqrt{d}]}\\
    &= \lambda_2- \frac{c(\lambda_2- \lambda_1)}{c+ (\lambda_2/\lambda_1)^i}\cdot \frac{1}{1+ p^{-\nu_p\left(c+ (\lambda_2/\lambda_1)^i\right)+(k/2)}  \Z_p[\sqrt{d}]}\\
    &= \lambda_2- \frac{c(\lambda_2- \lambda_1)}{c+ (\lambda_2/\lambda_1)^i}\cdot \left( 1+ p^{-\nu_p\left(c+ (\lambda_2/\lambda_1)^i\right)+(k/2)}  \Z_p[\sqrt{d}]\right)\\
    &= \lambda_2- \frac{c(\lambda_2- \lambda_1)}{c+ (\lambda_2/\lambda_1)^i} +  p^{\nu_p(c(\lambda_2-\lambda_1))-2\nu_p\left(c+ (\lambda_2/\lambda_1)^i\right)+(k/2)} \Z_p[\sqrt{d}]\\
    &= \frac{a_{i+1}}{a_i}+  p^{\nu_p(\lambda_2-\lambda_1)-2\nu_p\left(c+ (\lambda_2/\lambda_1)^i\right)+(k/2)} \Z_p[\sqrt{d}].
\end{split}
\end{equation}
By \eqref{proof sec, equation of Wi} and \eqref{proof sec, image of D_i in ramified case}, we obtain
\begin{equation}\label{proof sec, ramified image of Wik}
\begin{split}
 T(W_i)&\subseteq T(D_i)\cap \Q_p=  \frac{a_{i+1}}{a_i}+ p^{\lceil\nu_p(\lambda_1-\lambda_2)-2\nu_p\left(c+\left(\lambda_2/\lambda_1\right)^i\right)+(k/2)\rceil}  \Z_p.
\end{split}
\end{equation}
As in the proof of Theorem \ref{theorem for p-adic Kepler set of order 2 when roots are in Q_p} for $\lambda_1,\lambda_2 \not\in \Q_p$, the inclusion in \eqref{proof sec, ramified image of Wik} is actually an equality: 
\begin{equation}\label{proof, sec, image of Wi,k under T ramified}
T(W_i)= \frac{a_{i+1}}{a_i}+p^{\lceil\nu_p(\lambda_1-\lambda_2)-2\nu_p\left(c+\left(\lambda_2/\lambda_1\right)^i\right)+(k/2)\rceil}  \Z_p.    
\end{equation}
\begin{enumerate}
\item The proof in this case follows immediately from \eqref{proof sec, image od union of Wi under T in ramified case} and \eqref{proof, sec, image of Wi,k under T ramified}.
\item Using the facts that $k$ is an odd integer, that $k\geq 1$, and that $\lambda_1 \equiv \lambda_2\,(\textup{mod}\ p^{\nu_p(\lambda_2)+(k/2)})$, we have $\nu_p(\lambda_2) \in \Z$.
Since $-c \in W_0$, we have
\begin{equation}\label{proof sec, equation of Kepler set for m=1, ramified}
\begin{split}
 T(D_0)& = \lambda_2 - c(\lambda_2- \lambda_1)\cdot \frac{1}{c+ 1 + \pi^k \Z_p[\sqrt{d}]}\\
 &= \lambda_2 - c(\lambda_2 -\lambda_1)\cdot \frac{1}{\pi^k \Z_p[\sqrt{d}]}= \lambda_2 - c(\lambda_2 -\lambda_1)\cdot \frac{1}{\pi^{k-1}} \cdot \Z_p[\sqrt{d}]^C\\
 &=  \lambda_2 + \lambda_2(\lambda_1/\lambda_2- 1)\cdot \frac{1}{\pi^{k-1}}\Z_p[\sqrt{d}]^C\\
 &= \lambda_2 + \lambda_2 \cdot \pi\Z_p[\sqrt{d}]^C=\frac{\lambda_1+\lambda_2}{2}+ p^{\nu_p(\lambda_2)+(1/2)}\Z_p[\sqrt{d}]^C\\
 &=\left(\frac{\lambda_1+\lambda_2}{2}+ p^{\nu_p(\lambda_2)+(1/2)}\Z_p[\sqrt{d}]\right)^C.\\
\end{split}
\end{equation}
By \eqref{proof sec, equation of Wi}, \eqref{proof sec, image od union of Wi under T in ramified case}, and \eqref{proof sec, equation of Kepler set for m=1, ramified}, we get
\begin{equation}\label{proof sec, Kepler set as a subset for m=1 ramified}
\begin{split}
K &= T(W_0)\subseteq  T(D_0) \cap \Q_p\\
&=\left(\frac{\lambda_1+\lambda_2}{2}+ p^{\lceil \nu_p(\lambda_2)+1/2\rceil}\Z_p\right)^C.\\
\end{split}
\end{equation}
It can be easily seen that the inclusion in \eqref{proof sec, Kepler set as a subset for m=1 ramified} is actually an equality:
\begin{equation*}
\begin{split}
K &= \left(\frac{\lambda_1+\lambda_2}{2}+ p^{\lceil \nu_p(\lambda_2)\rceil+1}\Z_p\right)^C.
\end{split}
\end{equation*}
\item By the definition of $l$ and $s$, for every $i\neq s$ in the range $[0, l-1]$ we have $(\lambda_2/\lambda_1)^i \not \equiv (\lambda_2/\lambda_1)^s \equiv -c\,(\textup{mod}\ \pi)$, and therefore $v_p(c+ (\lambda_2/\lambda_1)^i)= 0$.
By \eqref{proof, sec, image of Wi,k under T ramified}, we have
\begin{equation}\label{proof sec, image of Wi ramified}
\begin{split}
T(W_i) &= \frac{a_{i+1}}{a_i}+ p^{\lceil\nu_p(\lambda_1-\lambda_2)+k/2\rceil}\Z_p,
\end{split} \qquad 0\leq i\leq l-1,\, i\neq s.
\end{equation}
Since $c+ (\lambda_2/\lambda_1)^s\in \pi^k\Z_p[\sqrt{d}]$ and $v_p(\lambda_1/\lambda_2-1)=0$, we get
\begin{equation}\label{proof sec, image of D_s ramified}
\begin{split}
  T(D_s)& = T\left((\lambda_2/\lambda_1)^s + \pi^k \Z_p[\sqrt{d}]\right)\\
    &= \lambda_2 - c(\lambda_2- \lambda_1)\cdot\frac{1}{c+ (\lambda_2/\lambda_1)^s + \pi^k \Z_p[\sqrt{d}]}\\
    &= \lambda_2 -c(\lambda_2-\lambda_1)\cdot\frac{1}{\pi^k \Z_p[\sqrt{d}]}= \lambda_2+ \lambda_2 \frac{p^{v_p(\lambda_1/\lambda_2-1)}}{\pi^{k-1}}\cdot\Z_p[\sqrt{d}]^C\\
    &= \lambda_2 + \frac{\lambda_2}{\pi^{k-1}}\cdot \Z_p[\sqrt{d}]^C= \lambda_2\left(1 + \frac{1}{\pi^{k-1}}\Z_p[\sqrt{d}]^C\right)\\
    &= p^{\nu_p(\lambda_2)-(k-1)/2} \cdot \Z_p[\sqrt{d}]^C.
    \end{split}
\end{equation}
By \eqref{proof sec, equation of Wi} and \eqref{proof sec, image of D_s ramified}, we obtain
\begin{equation}\label{proof sec, ramified case image of W_s under T} 
\begin{split}
    T(W_s)&\subseteq T(D_s) \cap \Q_p
    =p^{\lceil\nu_p(\lambda_2)+1/2-k/2\rceil} \Z_p^C.
\end{split}
\end{equation}
As above, the inclusion in \eqref{proof sec, ramified case image of W_s under T} is actually an equality:
\begin{equation}\label{proof sec, equality for Ws ramified}
  \begin{split}
    T(W_s)
    &= p^{\lceil\nu_p(\lambda_2)+1/2-k/2\rceil} \Z_p^C.
\end{split}
\end{equation}
The proof follows from \eqref{proof sec, image od union of Wi under T in ramified case}, \eqref{proof sec, image of Wi ramified}, and \eqref{proof sec, equality for Ws ramified}.
\end{enumerate}
\vspace{1cm}

\subsection{Proof of Theorem \ref{result, proposition on Kepler set equal to Qp}}
\noindent (1) Since the order of $\lambda_2/\lambda_1$ modulo $p$ is at most $p-1$, the result trivially follows from Theorem~\ref{theorem for p-adic Kepler set of order 2 when roots are in Q_p}.
\vspace{0.25 cm}

\noindent (2) By the definition of the rank of appearance, we have
\begin{equation*}
\begin{split}
L_{\alpha_{L}(p)}&\equiv 0\ (\textup{mod}\, p).
\end{split}
\end{equation*}
Since $L_{\alpha_{L}(p)}= \frac{\lambda_1^{\alpha_{L}(p)}- \lambda_2^{\alpha_{L}(p)}}{\lambda_1-\lambda_2}$, this implies
\begin{equation*}
\begin{split}
\lambda_1^{\alpha_{L}(p)}- \lambda_2^{\alpha_{L}(p)} &\equiv 0\ (\textup{mod}\, p),\\
\end{split}
\end{equation*}
so that 
\begin{equation*}
\begin{split}
\left(\frac{\lambda_2}{\lambda_1}\right)^{\alpha_{L}(p)} &\equiv 1\  (\textup{mod}\, p).\\
\end{split}
\end{equation*}
Hence, $\alpha_{L}(p)$ is the order of $\lambda_2/\lambda_1$ modulo $p$. It follows from Theorem \ref{theorem for p-adic Kepler set of order 2 when roots are in Q_p}.$(3)$ that the Kepler set of the sequence $(L_n)_{n=0}^\infty$ is
\begin{equation*}
\begin{split}
K((L_n))&= \bigsqcup_{i=1}^{\alpha_{F}(p)-1}\left(\frac{L_{i+1}}{L_i} + p^{\nu_p(\lambda_2-\lambda_1)+1} \Z_p\right)\bigsqcup \left(p^{\nu_p(\lambda_2)+1-1} \Z_p^C\right)\\
&= \bigsqcup_{i=1}^{p}\left(\frac{L_{i+1}}{L_i} + p\Z_p\right)\bigsqcup \Z_p^C.\\
\end{split}
\end{equation*} 
Since the sets $\frac{L_{i+1}}{L_i}+ p\Z_p$, $1\leq i\leq p$, are pairwise disjoint, we get $K((L_n))= \Q_p$.
\vspace{0.25 cm}

\noindent (3) By Remark \ref{result, lemma on k odd}, we have $l=2$. Note that $c_1/c_2=-1$. It follows from Theorem~\ref{theorem for p-adic Kepler set2 of order 2 when roots are in Q_p}.$(3)$ that the Kepler set is 
\begin{equation}\label{proof, equation for Kepler set equal to C in ramified}
\begin{split}
K((L_n))&= \left(\frac{L_{2}}{L_1} + p^{\lceil\nu_p(\lambda_2-\lambda_1)+ 1/2\rceil} \Z_p\right) \bigsqcup p^{\lceil\nu_p(\lambda_2)+1 -1/2\rceil}\Z_p^C\\
&= \left(\frac{L_{2}}{L_1} + p^{\lceil1/2+ 1/2\rceil} \Z_p\right) \bigsqcup p^{\lceil 1/2+1 -1/2\rceil}\Z_p^C\\
&= \left(\frac{L_{2}}{L_1} + p\Z_p\right) \bigsqcup \left( p \Z_p^C\right).\\
\end{split}   
\end{equation}
Now:
$$L_2= \frac{\lambda_2^2-\lambda_1^2}{\lambda_1- \lambda_2}= \lambda_1 + \lambda_2= 2 a.$$
Since $\nu_p(\lambda_1)=1/2$, we have $\nu_p(a)\geq 1$. Write $a=p a_1$. By \eqref{proof, equation for Kepler set equal to C in ramified},
\begin{equation}
\begin{split}
K((L_n))&= \left(2 p a_1 + p\Z_p\right) \bigsqcup \Z_p^* \bigsqcup \Z_p^C\\
&= p \Z_p \bigsqcup \Z_p^* \bigsqcup \Z_p^C\\
&=\Q_p.
\end{split}   
\end{equation}
\vspace{1cm}

\section{Proof of Theorem \ref{fibanacci theorem2}}
The roots of the characteristic polynomial of $L(a,1)$ are
$$\lambda_1= \frac{a+ \sqrt{D}}{2}, \qquad \lambda_2= \frac{a- \sqrt{D}}{2},$$
where $D=a^2 +4$. Since $\lambda_1\lambda_2=-1$ and $\lambda_1 + \lambda_2 =a\in \Z_p$, we have $|\lambda_1|_p= |\lambda_2|_p=1$. Let $m\in [1,\alpha_L(p)]$ be an arbitrary fixed integer. We have
\begin{equation}\label{proof, equation of consecutive ratio in lucas sequence}
\begin{split}
\frac{L_{n+m}}{L_{n}}= \frac{\lambda_1^{n+m}- \lambda_2^{n+m}}{\lambda_1^n -\lambda_2^n}= \frac{\lambda_1^m - \lambda_2^m (\lambda_2/\lambda_1)^n}{1- (\lambda_2/\lambda_1)^n}, \qquad n=0,1,2,\ldots.     
\end{split}   
\end{equation}
Denote
\begin{equation}\label{proof, equation of Km as closure}
\begin{split}
 K_m = \overline{\left\{ \frac{L_{n+m}}{L_n}\, :\, n\in \N\right\}}.    
\end{split}   
\end{equation}
We split the rest of the proof to three cases.

\vspace{0.25cm}
\noindent\underline{Case I.} $\lambda_1,\lambda_2 \in \Q_p$. Consider two subcases.
\vspace{0.25cm}

\noindent(1) $p\nmid D$.
\vspace{0.25cm}

By the definition of the rank of appearance, we have
\begin{equation*}
\begin{split}
 L_{\alpha_L(p)}&\equiv 0\ (\textup{mod}\, p),\\
 \frac{\lambda_1^{\alpha_L(p)}- \lambda_2^{\alpha_L(p)}}{\lambda_1-\lambda_2}&\equiv 0\ (\textup{mod}\, p),\\
  \frac{\lambda_1^{\alpha_L(p)}- \lambda_2^{\alpha_L(p)}}{\sqrt{D}}&\equiv 0\ (\textup{mod}\, p),\\
  \left(\frac{\lambda_2}{\lambda_1}\right)^{\alpha_L(p)}&\equiv 1 \ (\textup{mod}\, p). 
\end{split}    
\end{equation*}
Hence, $\alpha_L(p)$ is the order of $\lambda_2/\lambda_1$ modulo $p$. By Theorem \ref{prop, orbit closure of 1 in Qp}, we have
\begin{equation}\label{proof, equation of Di with out extension for Lucas}
\begin{split}
 \overline{\left\{\left(\lambda_2/\lambda_1\right)^n\, :\, n\in \N\right\}}= \bigsqcup_{i=0}^{\alpha_{L}(p)-1} D_i,
\end{split}    
\end{equation}
where $D_i= (\lambda_2/\lambda_1)^i + p \Z_p$.
By \eqref{proof, equation of consecutive ratio in lucas sequence}, \eqref{proof, equation of Km as closure}, and \eqref{proof, equation of Di with out extension for Lucas}, we get
\begin{equation*}
\begin{split}
K_m= \left\{ \frac{\lambda_1^{m}-\lambda_2^{m} z}{1-z}\,:\, z\in \bigsqcup_{i=0}^{\alpha_{L}(p)-1} D_i\right\}.
\end{split}
\end{equation*}
In other words,
\begin{equation}\label{proof sec, Lucas Kepler set as the image of the union of Di}
K_m= T_m\left(\bigsqcup_{i=0}^{\alpha_{L}(p)-1} D_i\right),
\end{equation}
where $T_m$ is the $p$-adic Möbius transformation given by 
\begin{equation}\label{proof sec, Lucas p-adic mob when root are in Q_p}
\begin{split}
  T_m(z)&=\frac{\lambda_1^{m}-\lambda_2^{m} z}{1-z}= \lambda_2^{m} + \frac{\lambda_1^{m}- \lambda_2^{m}}{1-z}=  \lambda_2^{m} + \frac{\sqrt{D}\cdot L_{m}}{1-z},
\end{split} \qquad z\in \Q_p\cup \{\infty\}.
\end{equation}
By \eqref{proof sec, Lucas p-adic mob when root are in Q_p}, we obtain
\begin{equation}\label{proof sec, Lucas Tm(D0) when lambda in Qp}
\begin{split}
 T_m(D_0)&=  \lambda_2^{m} + \frac{\sqrt{D}\cdot L_{m}}{p\Z_p}= \lambda_2^{m} +L_m\cdot \Z_p^C= (\lambda_2^m + L_m \cdot\Z_p)^C,\\
\end{split}
\end{equation}
and 
\begin{equation}\label{proof sec, Lucas Tm(Di) when lambda in Qp}
\begin{split}
T_m(D_i)&= \lambda_2^{m} +\frac{\lambda_1^{m}- \lambda_2^{m}}{1-(\lambda_2/\lambda_1)^i + p\Z_p}
= \lambda_2^{m} +\frac{\lambda_1^{m}- \lambda_2^{m}}{1-(\lambda_2/\lambda_1)^i }\cdot \frac{1}{1+ p\Z_p}\\
&= \lambda_2^{m} +\frac{\lambda_1^{m}- \lambda_2^{m}}{1-(\lambda_2/\lambda_1)^i }\cdot (1+ p\Z_p)
= \frac{L_{i+ m}}{L_i} +\frac{\lambda_1^{m}- \lambda_2^{m}}{1-(\lambda_2/\lambda_1)^i }\cdot p\Z_p\\
&= \frac{L_{i+ m}}{L_i} + L_m\cdot p\Z_p
\end{split} 
\end{equation}
for $1\leq i\leq \alpha_{L}(p)-1$.
In view of \eqref{proof sec, Lucas Tm(D0) when lambda in Qp} and \eqref{proof sec, Lucas Tm(Di) when lambda in Qp}, we rewrite \eqref{proof sec, Lucas Kepler set as the image of the union of Di} as
\begin{equation}\label{proof sec, equation2 of Km when lambda in Qp 2}
 K_m= \bigsqcup_{i=1}^{\alpha_{L}(p)-1}\left(\frac{L_{i+ m}}{L_i} + L_m\cdot p\Z_p\right)\bigsqcup(\lambda_2^m + L_m \cdot \Z_p)^C.  
\end{equation}
By \eqref{proof sec, equation2 of Km when lambda in Qp 2}, we conclude that 
\begin{equation}\label{proof sec, equation of Kapler when lambda in Qp Lucas}
\begin{split}
 K_{\alpha_{L}(p)}&\supseteq  (\lambda_2^{\alpha_{L}(p)} + L_{\alpha_{L}(p)}\Z_p)^C=(\lambda_2^{\alpha_{L}(p)} + p\Z_p)^C,\\
\end{split}
\end{equation}
and 
\begin{equation}\label{proof sec, equation2 of Kapler when lambda in Qp Lucas}
\begin{split}
 K_{\alpha_{L}(p)-2}&\supseteq \frac{L_{1+ \alpha_L(p)-2}}{L_1} + p\Z_p= \frac{\lambda_1^{\alpha_{L}(p)-1}- \lambda_2^{\alpha_L(p)-1}}{\lambda_1-\lambda_2} + p\Z_p= \lambda_2^{\alpha_{L}(p)} + p\Z_p.
\end{split}
\end{equation}
From \eqref{proof sec, equation of Kapler when lambda in Qp Lucas} and \eqref{proof sec, equation2 of Kapler when lambda in Qp Lucas} it follows that
$$K_{\alpha_{L}(p)}\bigcup K_{\alpha_{L}(p)-2}= \Q_p.$$
\vspace{0.25cm}

\noindent (2) $p^2\mid D$.
\vspace{0.25cm}

In this case the order of $\lambda_2/\lambda_1$ modulo $p$ is 1 and 
\begin{equation*}
  \lambda_2/\lambda_1= 1+ p^k \mu,
\end{equation*}
where $k=\nu_p(\sqrt{D})\geq 1$ and $p\nmid \mu$.
We have
\begin{equation}\label{proof sec, Lucas equation of D0 for l equal 1}
  \overline{\left\{\left(\lambda_2/\lambda_1\right)^n\, |\, n\in \N\right\}}= D_0'= 1 + p^k \Z_p.
\end{equation}
By \eqref{proof, equation of consecutive ratio in lucas sequence}, \eqref{proof, equation of Km as closure}, and \eqref{proof sec, Lucas equation of D0 for l equal 1}, we may write
\begin{equation}\label{proof sec, Lucas equation of Km in Qp for l equal 1}
\begin{split}
K_m= \left\{ \frac{\lambda_1^{m}-\lambda_2^{m} z}{1-z}\,:\, z\in  D_0'\right\}.
\end{split}
\end{equation}
In view of \eqref{proof sec, Lucas p-adic mob when root are in Q_p}, \eqref{proof sec, Lucas equation of D0 for l equal 1}, and \eqref{proof sec, Lucas equation of Km in Qp for l equal 1}, we obtain
\begin{equation}\label{proof sec, Lucas equation of Km for l equal to 1 in Qp}
\begin{split}
K_m= T_m (D_0')= \lambda_2^m + \frac{\sqrt{D}\cdot L_m}{p^k\Z_p}= \lambda_2^m + p L_m \Z_p^C= (\lambda_2^m + pL_m \Z_p)^C. \end{split}    
\end{equation}
By \eqref{proof sec, Lucas equation of Km for l equal to 1 in Qp}, we conclude that
\begin{equation*}
K_{\alpha_L(p)}= (\lambda_2^{\alpha_L(p)} + p L_{\alpha_L(p)}\Z_p)^C,\qquad   K_{\alpha_L(p)-2}= (\lambda_2^{\alpha_L(p)-2} + p L_{\alpha_L(p)-2}\Z_p)^C  
\end{equation*}
We claim that 
\begin{equation}\label{proof sec, Lucas intersection of two Kepler sets for l equal to 1}
  (\lambda_2^{\alpha_L(p)} + p L_{\alpha_L(p)}\Z_p) \bigcap (\lambda_2^{\alpha_L(p)-2} + p L_{\alpha_L(p)-2}\Z_p)= \varnothing. 
\end{equation}
In fact, suppose that the intersection in \eqref{proof sec, Lucas intersection of two Kepler sets for l equal to 1} is non-empty, which means that
\begin{equation*}
\begin{split}
\lambda_2^{\alpha_L(p)}& \equiv \lambda_2^{\alpha_L(p)-2} \ (\textup{mod}\, p),\\
\lambda_2^2&\equiv 1 \ (\textup{mod}\, p),\\
(a/2)^2&\equiv 1 \ (\textup{mod}\, p),\\
a^2&\equiv 4 \ (\textup{mod}\, p).
\end{split}
\end{equation*}
Now, since $p\mid D$, we have $a^2\equiv -4\  (\textup{mod}\, p)$, which gives a contradiction.
Hence,
$$K_{\alpha_L(p)}\bigcup K_{\alpha_L(p)-2}= \Q_p.$$

\vspace{0.25cm}
\noindent\underline{Case II.} $\lambda_1,\lambda_2 \in \Q_p(\sqrt{D})-\Q_p$, where $\Q_p(\sqrt{D})$ unramified. Consider two subcases. 
\vspace{0.25cm}

\noindent (1) $p\nmid D$.
\vspace{0.25cm}

As in Case I when $p\nmid D$, the order of $\lambda_2/\lambda_1$ modulo~$p$ is $\alpha_L(p)$. 
Denote
$$D_i= (\lambda_2/\lambda_1)^i + p \Z_p[\sqrt{D}], \qquad 0\leq i\leq \alpha_{L}(p)-1.$$
By Theorem \ref{prop, orbit of 1 in ramified extension, under T(z)=lambda z}, we have
\begin{equation}\label{proof sec, Lucas equation od Wi}
    \overline{\left\{(\lambda_2/\lambda_1)^{\alpha_{L}(p)n +i}\,|\, n\in \N\right\}}= W_i,\qquad 0\leq i \leq \alpha_{L}(p)-1,
\end{equation}
where 
\begin{equation}\label{proof sec, Lucas equation2 od Wi unramified}
W_i= \left((\lambda_2/\lambda_1)^i + p\Z_p[\sqrt{D}]\right)\bigcap U^{0}(\sqrt{D})= D_i \cap U^{0}(\sqrt{D}).    
\end{equation}
By \eqref{proof, equation of consecutive ratio in lucas sequence}, \eqref{proof, equation of Km as closure}, and \eqref{proof sec, Lucas equation od Wi}, we may write
\begin{equation*}
\begin{split}
K_m= \left\{ \frac{\lambda_1^{m}-\lambda_2^{m}\cdot z}{1-z}\,:\, z\in \bigsqcup_{i=0}^{\alpha_{L}(p)-1} W_i\right\}.
\end{split}
\end{equation*}
In other words,
\begin{equation}\label{proof sec, Lucas Kepler set as the image of the union of Wi}
K_m= T_m\left(\bigsqcup_{i=0}^{\alpha_{L}(p)-1} W_i\right),
\end{equation}
where $T_m$ is given by 
\begin{equation}\label{proof sec, Lucas p-adic mob when root are not  in Q_p}
\begin{split}
  T_m(z)&=\frac{\lambda_1^{m}-\lambda_2^{m}z}{1-z}= \lambda_2^{m} + \frac{\lambda_1^{m}- \lambda_2^{m}}{1-z}=  \lambda_2^{m} + \frac{\sqrt{D}\cdot L_{m}}{1-z},
\end{split} \qquad z\in \Q_p(\sqrt{D})\cup \{\infty\}.
\end{equation}
Clearly, both $\lambda_1^{\alpha_{L}(p)}$ and $\lambda_2^{\alpha_{L}(p)}$ belong to the subring $\Z_p + \sqrt{D}\cdot p\Z_p$ of $\Z_p[\sqrt{D}]$, say $\lambda_1^{\alpha_{L}(p)}= A + Bp\sqrt{D}$ and $\lambda_2^{\alpha_{L}(p)}= A - Bp\sqrt{D}$ for some $A, B \in \Z_p$. 
By \eqref{proof sec, Lucas p-adic mob when root are not  in Q_p} and the fact that $\nu_p(L_{\alpha_{L}(p)})=1$, we have
\begin{equation}\label{proof sec, Lucas Tm(D0) when lambda not in Qp}
\begin{split}
 T_m(D_0)&= \lambda_2^{m}+ \frac{\sqrt{D}\cdot L_{m}}{p\Z_p[\sqrt{D}]}=\lambda_2^{m} + L_{m}\cdot\Z_p[\sqrt{D}]^C\\
 &=\begin{cases}\Z_p[\sqrt{D}]^C, &\qquad 1\leq m<\alpha_{L}(p),\\
\left(A+ p \Z_p[\sqrt{D}]\right)^C, &\qquad m=\alpha_{L}(p).
 \end{cases}
\end{split}
\end{equation}
By \eqref{proof sec, Lucas equation2 od Wi unramified} and \eqref{proof sec, Lucas Tm(D0) when lambda not in Qp}, we conclude that
\begin{equation}\label{proof sec, Lucas image of W0 under Tm}
\begin{split}
 T_m(W_0)&\subseteq T_m(D_0)\cap \Q_p= \begin{cases}\Z_p^C, &\qquad 1\leq m<\alpha_{L}(p),\\
\left(A+ p \Z_p\right)^C, &\qquad m=\alpha_{L}(p).
 \end{cases}
\end{split}  
\end{equation}
It follows from the proof of Theorem \ref{theorem for p-adic Kepler set of order 2 when roots are in Q_p} that the inclusion in \eqref{proof sec, Lucas image of W0 under Tm} is an equality, so that
\begin{equation}\label{proof sec, Lucas image of W0 under Tm finnal}
\begin{split}
 T_m(W_0) &= \begin{cases}\Z_p^C, &\qquad 1\leq m<\alpha_{L}(p),\\
\left(  A+ p \Z_p\right)^C, &\qquad m= \alpha_{L}(p).
 \end{cases}  
 \end{split}
\end{equation}
According to \eqref{proof sec, Lucas p-adic mob when root are not  in Q_p}, similarly to \eqref{proof sec, Lucas Tm(Di) when lambda in Qp}, we obtain
\begin{equation}\label{proof sec, Lucas Tm(Di) when lambda not in Qp}
\begin{split}
T_m(D_i)= \frac{L_{i+ m}}{L_i} + L_m\cdot p\Z_p[\sqrt{D}],\qquad 0\leq i\leq \alpha_{L}(p)-1.
\end{split} 
\end{equation}
By \eqref{proof sec, Lucas equation2 od Wi unramified} and \eqref{proof sec, Lucas Tm(Di) when lambda not in Qp}, we have
\begin{equation}\label{proof sec, Lucas image of Wi under Tm}
\begin{split}
 T_m(W_i)&\subseteq T_m(D_i)\cap \Q_p=\frac{L_{i+ m}}{L_i}+L_m \cdot p \Z_p.
\end{split}
\end{equation}
Using the proof of Theorem \ref{theorem for p-adic Kepler set of order 2 when roots are in Q_p}, we get
\begin{equation}\label{proof sec, Lucas image of Wi under Tm finnal}
\begin{split}
 T_m(W_i) &= \frac{L_{i+ m}}{L_i}+L_m \cdot p \Z_p.
\end{split}  
\end{equation}
By \eqref{proof sec, Lucas Kepler set as the image of the union of Wi}, \eqref{proof sec, Lucas image of W0 under Tm finnal}, and \eqref{proof sec, Lucas image of Wi under Tm finnal}, we obtain
\begin{equation}\label{proof sec, Lucas equation of Km when lambda not in Qp 2}
\begin{split}
 K_m&=\bigcup_{i=1}^{\alpha_{L}(p)-1} \left(\frac{L_{i+ m}}{L_i} + L_m \cdot p\Z_p\right) \bigcup \begin{cases}\Z_p^C, &\qquad 1\leq m<\alpha_{L}(p),\\
\left( A+ p \Z_p\right)^C, &\qquad m=\alpha_{L}(p).
 \end{cases}    
\end{split} 
\end{equation}
It follows from \eqref{proof sec, Lucas equation of Km when lambda not in Qp 2} that 
\begin{equation*}\label{proof sec, Lucas equation of K alpha}
\begin{split}
 K_{\alpha_{L}(p)}&\supseteq \left(A+ p\Z_p\right)^C,\qquad K_{\alpha_{L}(p)-2}\supseteq \frac{L_{1+ \alpha_{L}(p)-2}}{L_1} + p\Z_p=A+ p\Z_p.\\
\end{split}
\end{equation*}
The last two inclusions yield
$$K_{\alpha_{L}(p)}\bigcup K_{\alpha_{L}(p)-2}= \Q_p.$$
\vspace{0.25cm}

\noindent (2) $p^2\mid D$.
\vspace{0.25cm}

In this case the order of $\lambda_2/\lambda_1$ modulo $p$ is 1 and 
\begin{equation}\label{proof sec,  Lucas equation of lambda1 by lambda2 in unramiffed for l equal 1}
\lambda_2/\lambda_1= 1+ p^k\mu,    
\end{equation}
where $k=\nu_p(\sqrt{D})\geq 1$ and $p\nmid \mu$. It follows from \cite[Lemma 2.1.(v)]{Sanna4} that $\alpha_L(p)=p$, and hence $\nu_p(L_{\alpha_L(p)})=1$ by Binet's formula.
Denote
$$D_0'= 1 + p^k \Z_p[\sqrt{d}],$$
where $d= D/p^{2k}$.
By Theorem \ref{prop, orbit of 1 in ramified extension, under T(z)=lambda z}, we have
\begin{equation}\label{proof sec, Lucas equation of D0 for l equal 1 in unramified}
  \overline{\left\{\left(\lambda_2/\lambda_1\right)^n\, |\, n\in \N\right\}}= W_0',
\end{equation}
where 
\begin{equation}\label{proof sec, Lucas equation2 od Wi unramified for l equal 1}
W_0'= D_0' \cap U^{0}(\sqrt{d}).    
\end{equation}
By \eqref{proof, equation of consecutive ratio in lucas sequence}, \eqref{proof, equation of Km as closure}, and \eqref{proof sec, Lucas equation of D0 for l equal 1 in unramified}, we have
\begin{equation}\label{proof sec, Lucas equation of Km in unramified for l equal 1}
 K_m = \left\{\frac{\lambda_1^m -\lambda_2^m z}{1-z}\, : \, z\in W_0'\right\}= T_m(W_0'),
\end{equation}
where $T_m$ is as in \eqref{proof sec, Lucas p-adic mob when root are not  in Q_p}.
Using the facts that $\lambda_2\equiv \lambda_1 (\textup{mod}\, p^k)$ with $k\geq 1$, $\alpha_L(p)=p$, and $\nu_p(L_{\alpha_L(p)})=1$, we have
\begin{equation}\label{proof sec, Lucas equation of Tm(D0) in unramified case}
 T_m(D_0')=  \lambda_2^m+ \frac{\sqrt{D}\cdot L_m}{p^k \Z_p[\sqrt{d}]}= \lambda_2^m + pL_m \Z_p[\sqrt{d}]^C= \left(\frac{\lambda_1^m + \lambda_2^m}{2} + p L_m \Z_p(\sqrt{d})\right)^C,  
\end{equation}
for $1\leq m\leq \alpha_L(p)$.
By \eqref{proof sec, Lucas equation2 od Wi unramified for l equal 1}, \eqref{proof sec, Lucas equation of Km in unramified for l equal 1}, and \eqref{proof sec, Lucas equation of Tm(D0) in unramified case}, we get
\begin{equation}\label{proof sec, Lucas image of W0 under Tm for l equal to 1}
\begin{split}
K_m=  T_m(W_0')&\subseteq T_m(D_0')\cap \Q_p= \left(\frac{\lambda_1^m + \lambda_2^m}{2} + p L_m \Z_p\right)^C.
\end{split}  
\end{equation}
It follows from the proof of Theorem \ref{theorem for p-adic Kepler set of order 2 when roots are in Q_p} that the inclusion in \eqref{proof sec, Lucas image of W0 under Tm for l equal to 1} is an equality:
\begin{equation}\label{proof sec, Lucas2 image of W0 under Tm for l equal to 1}
    K_m= \left(\frac{\lambda_1^m + \lambda_2^m}{2} + p L_m \Z_p\right)^C.
\end{equation}
By \eqref{proof sec, Lucas2 image of W0 under Tm for l equal to 1}, we conclude that
\begin{equation*}
K_{\alpha_L(p)}= \left(\frac{\lambda_1^{\alpha_L(p)} + \lambda_2^{\alpha_L(p)}}{2} + p L_{\alpha_L(p)} \Z_p\right)^C=\left(\frac{\lambda_1^{\alpha_L(p)} + \lambda_2^{\alpha_L(p)}}{2} + p^2 \Z_p\right)^C,
\end{equation*}
and 
\begin{equation*}
K_{\alpha_L(p)-2}= \left(\frac{\lambda_1^{\alpha_L(p)-2} + \lambda_2^{\alpha_L(p)-2}}{2} + p L_{\alpha_L(p)-2} \Z_p\right)^C=\left(\frac{\lambda_1^{\alpha_L(p)-2} + \lambda_2^{\alpha_L(p)-2}}{2} + p \Z_p\right)^C. 
\end{equation*}
We claim that
\begin{equation}\label{proof sec, Lucas equation for intersection of sets in unramified case}
\left(\frac{\lambda_1^{\alpha_L(p)} + \lambda_2^{\alpha_L(p)}}{2} + p^2 \Z_p\right)  \bigcap   \left(\frac{\lambda_1^{\alpha_L(p)-2} + \lambda_2^{\alpha_L(p)-2}}{2} + p \Z_p\right)= \varnothing.
\end{equation}
In fact, suppose that the intersection in \eqref{proof sec, Lucas equation for intersection of sets in unramified case} is non-empty, which means that
\begin{equation*}
\begin{split}
 \frac{\lambda_1^{\alpha_L(p)} + \lambda_2^{\alpha_L(p)}}{2}&\equiv \frac{\lambda_1^{\alpha_L(p)-2} + \lambda_2^{\alpha_L(p)-2}}{2}\ ({\textup{mod}\ p}),\\
\lambda_2^2&\equiv 1 \ (\textup{mod}\, p),\\
(a/2)^2&\equiv 1 \ (\textup{mod}\, p),\\
a^2&\equiv 4 \ (\textup{mod}\, p).
\end{split}
\end{equation*}
Now, since $p\mid D$, we have $a^2\equiv -4\  (\textup{mod}\, p)$, which gives a contradiction. Hence
$$K_{\alpha_L(p)}\bigcup K_{\alpha_L(p)-2}= \Q_p.$$

\vspace{0.25cm}
\noindent\underline{Case III.} $\lambda_1,\lambda_2 \in \Q_p(\sqrt{D})-\Q_p$, where $\Q_p(\sqrt{D})$ is ramified. Consider two subcases.
\vspace{0.25cm}

\noindent (1) $p\pdiv D$.
\vspace{0.25cm}

In this case $\nu_{\pi}(\sqrt{D})=1$. Therefore, the order of $\lambda_2/\lambda_1$ modulo~$\pi$ is 1. We have
\begin{equation}\label{proof sec,  Lucas equation of lambda1 by lambda2 in ramiffed for l equal 1 first}
\lambda_2/\lambda_1 = 1 + \pi \mu,\qquad (\pi\nmid \mu).    
\end{equation}
It follows from \cite[Lemma 2.1.(v)]{Sanna4} that $\alpha_L(p)=p$, and hence $\nu_p(L_{\alpha_L(p)})=1$ by Binet's formula.
Denote
$$D_0= 1 + \pi \Z_p[\sqrt{D}].$$
By the proof of Theorem \ref{theorem for p-adic Kepler set2 of order 2 when roots are in Q_p}, we know that
\begin{equation}\label{proof, equation of W0 in theorem of two rows}
 \overline{\left\{(\lambda_2/\lambda_1)^n \, :\, n\in \N\right\}}= W_0,   
\end{equation}
where
\begin{equation}\label{proof, Lucuas equation of W0 in theorem of two rows in ramified case}
W_0=D_0 \cap U^0(\sqrt{D}).    
\end{equation}
By \eqref{proof, equation of consecutive ratio in lucas sequence}, \eqref{proof, equation of Km as closure}, and \eqref{proof, equation of W0 in theorem of two rows}, we have
\begin{equation}\label{proof, equation of Km as image of W0 in ramified case}
\begin{split}
K_m &= \left\{\frac{\lambda_1^m -\lambda_2^m z}{1-z}\, :\, z\in W_0\right\}.
\end{split}
\end{equation}
In other words,
\begin{equation}\label{proof sec, Lucas equation1 of Km in ramified case}
  K_m= T_m(W_0),  
\end{equation}
where $T_m$ is the $p$-adic Möbius transformation given by
\begin{equation}\label{proof sec, Lucas equation1 p-adic mob in ramified case}
\begin{split}
  T_m(z)&=\frac{\lambda_1^{m}-\lambda_2^{m}z}{1-z}= \lambda_2^{m} + \frac{\lambda_1^{m}- \lambda_2^{m}}{1-z}=  \lambda_2^{m} + \frac{\sqrt{D}\cdot L_{m}}{1-z},
\end{split} \qquad z\in \Q_p(\sqrt{D})\cup \{\infty\}.
\end{equation}
We have 
\begin{equation}\label{proof equation of D0 under Tm in theorem of lucas}
 \begin{split}
  T_m(D_0)&= \lambda_2^m + \frac{\sqrt{D} \cdot L_m }{\pi \Z_p[\sqrt{D}]}  
  = \lambda_2^m + \pi L_m \Z_p[\sqrt{D}]^C\\
  &=\frac{\lambda_1^m + \lambda_2^m}{2}+ \pi L_m \Z_p[\sqrt{D}]^C
  =\left(\frac{\lambda_1^m + \lambda_2^m}{2}+ \pi L_m \Z_p[\sqrt{D}]\right)^C.\\
 \end{split}   
\end{equation}
By \eqref{proof, Lucuas equation of W0 in theorem of two rows in ramified case}, \eqref{proof, equation of Km as image of W0 in ramified case} and \eqref{proof equation of D0 under Tm in theorem of lucas}, we have
\begin{equation}\label{proof, equation of K_m in ramified for Lucas}
K_m \subseteq T_m(D_0)\cap \Q_p = \left(\frac{\lambda_1^m + \lambda_2^m}{2}+ pL_m \Z_p\right)^C.    
\end{equation}
It follows from the proof of Theorem \ref{theorem for p-adic Kepler set2 of order 2 when roots are in Q_p} that the inclusion in \eqref{proof, equation of K_m in ramified for Lucas} is an equality:
\begin{equation}\label{proof, equation2 of K_m in ramified for Lucas}
K_m = \left(\frac{\lambda_1^m + \lambda_2^m}{2}+ pL_m \Z_p\right)^C.   
\end{equation}
By \eqref{proof, equation2 of K_m in ramified for Lucas}, we have
\begin{equation}\label{proof, equation of Kalpha in ramified case Lucas}
\begin{split}
K_{\alpha_{L}(p)}&= \left(\frac{\lambda_1^{\alpha_{L}(p)} + \lambda_2^{\alpha_{L}(p)}}{2} + p L_{\alpha_{L}(p)} \Z_p\right)^C
=\left(\frac{\lambda_1^{\alpha_{L}(p)} + \lambda_2^{\alpha_{L}(p)}}{2} + p^2 \Z_p\right)^C,\\
\end{split}    
\end{equation}
and
\begin{equation}\label{proof, equation2 of Kalpha in ramified case Lucas}
\begin{split}
K_{\alpha_L(p)-2}= \left(\frac{\lambda_2^{\alpha_L(p)-2} + \lambda_1^{\alpha_L(p)-2}}{2} + p L_{\alpha_L(p)-2} \Z_p\right)^C=\left(\frac{\lambda_2^{\alpha_L(p)-2} + \lambda_1^{\alpha_L(p)-2}}{2} + p \Z_p\right)^C.    
\end{split}    
\end{equation}
Since $\alpha_L(p)>2$, we have $p\nmid L_2= a$. Therefore,
\begin{equation}\label{proof, equation on empty intersection of two balls}
 \left(\frac{\lambda_1^{\alpha_{L}(p)} + \lambda_2^{\alpha_{L}(p)}}{2} + p^2 \Z_p\right)\bigcap \left(\frac{\lambda_1^{\alpha_{L}(p)-2} + \lambda_2^{\alpha_{L}(p)-2}}{2} + p\Z_p\right)= \varnothing.   
\end{equation}
In fact, suppose that the intersection in \eqref{proof, equation on empty intersection of two balls} is non-empty, which means that
\begin{equation}\label{proof, equation to prove the claim}
 \frac{\lambda_1^{\alpha_{L}(p)} + \lambda_2^{\alpha_{L}(p)}}{2} \equiv  \frac{\lambda_1^{\alpha_{L}(p)-2} + \lambda_2^{\alpha_{L}(p)-2}}{2}\ (\textup{mod}\, p). 
 \end{equation}
 Using the facts that $\lambda_1\lambda_2=-1$ and that $\left(\lambda_2/ \lambda_1\right)^{\alpha_{L}(p)} \equiv 1\ (\textup{mod}\, p)$, as well as \eqref{proof, equation to prove the claim}, we get the following chain of congruences:
\begin{equation*}
\begin{split}
 \lambda_1^2 + \lambda_2^2& \equiv2 \ (\textup{mod}\,p),\\
 \lambda_1^2 + \lambda_2^2- 2\lambda_1\lambda_2& \equiv 4 \ (\textup{mod}\,p),\\
 (\lambda_1 - \lambda_2)^2& \equiv 4 \ (\textup{mod}\,p),\\
  D& \equiv 4 \ (\textup{mod}\,p),\\
   a^2& \equiv 0 \ (\textup{mod}\,p).\\
\end{split}   
\end{equation*}
This contradicts the fact that $p\nmid a$, and thus proves \eqref{proof, equation on empty intersection of two balls}. Hence,
$$K_{\alpha_{L}(p)}\bigcup K_{\alpha_{L}(p)-2}= \Q_p.$$
\vspace{0.25cm}

\noindent (2) $p^2\mid D$.
\vspace{0.25cm}

Since $a\in \Z_p^*$, the order of $\lambda_2/\lambda_1$ modulo $p$ is 1 and
\begin{equation}\label{proof sec,  Lucas equation of lambda1 by lambda2 in ramiffed for l equal 1 second}
\lambda_2/\lambda_1= 1+ \pi^k \mu,    
\end{equation}
where $k= \nu_{\pi}(\sqrt{D})\geq 3$ and $\pi\nmid \mu$. It follows from \cite[Lemma 2.1.(v)]{Sanna4} that $\alpha_L(p)=p$, and hence $\nu_p(L_{\alpha_L(p)})=1$ by Binet's formula.
Denote
$$D_0'= 1 + \pi^k \Z_p[\sqrt{d}],$$
where $d= D/p^{k}$.
By the proof of Theorem \ref{theorem for p-adic Kepler set2 of order 2 when roots are in Q_p}, we know that
\begin{equation}\label{proof, equation2 of W0 in theorem of two rows}
 \overline{\left\{(\lambda_2/\lambda_1)^n \, :\, n\in \N\right\}}= W_0',   
\end{equation}
where
\begin{equation}\label{proof, Lucuas equation2 of W0 in theorem of two rows in ramified case}
W_0'=D_0' \cap U^0(\sqrt{d}).    
\end{equation}
By \eqref{proof, equation of consecutive ratio in lucas sequence}, \eqref{proof, equation of Km as closure}, and \eqref{proof, equation2 of W0 in theorem of two rows}, we have
\begin{equation}\label{proof, equation2 of Km as image of W0 in ramified case}
\begin{split}
K_m &= \left\{\frac{\lambda_1^m -\lambda_2^m z}{1-z}\, :\, z\in W_0\right\}= T_m (W_0'),
\end{split}
\end{equation}
where $T_m$ as in \eqref{proof sec, Lucas equation1 p-adic mob in ramified case}.
We have 
\begin{equation}\label{proof equation2 of D0 under Tm in theorem of lucas}
 \begin{split}
  T_m(D_0')&= \lambda_2^m + \frac{\sqrt{D} \cdot L_m }{\pi^k \Z_p[\sqrt{d}]} = \lambda_2^m + \pi L_m \Z_p[\sqrt{d}]^C\\
  &= \frac{\lambda_1^m + \lambda_2^m}{2}+ \pi L_m \Z_p[\sqrt{d}]^C=\left(\frac{\lambda_1^m + \lambda_2^m}{2}+ \pi L_m \Z_p[\sqrt{d}]\right)^C.\\
 \end{split}   
\end{equation}
By \eqref{proof, Lucuas equation2 of W0 in theorem of two rows in ramified case}, \eqref{proof, equation2 of Km as image of W0 in ramified case}, and \eqref{proof equation2 of D0 under Tm in theorem of lucas}, we have
\begin{equation}\label{proof, equation22 of K_m in ramified for Lucas}
K_m \subseteq T_m(D_0')\cap \Q_p = \left(\frac{\lambda_1^m + \lambda_2^m}{2}+ pL_m \Z_p\right)^C.    
\end{equation}
Using the proof of Theorem \ref{theorem for p-adic Kepler set2 of order 2 when roots are in Q_p}, we get
\begin{equation}\label{proof, equation23 of K_m in ramified for Lucas}
K_m = \left(\frac{\lambda_1^m + \lambda_2^m}{2}+ pL_m \Z_p\right)^C.   
\end{equation}
By \eqref{proof, equation23 of K_m in ramified for Lucas}, we have
\begin{equation*}\label{proof, equation of Kalpha in ramified case Lucas2}
\begin{split}
K_{\alpha_{L}(p)}&= \left(\frac{\lambda_1^{\alpha_{L}(p)} + \lambda_2^{\alpha_{L}(p)}}{2} + p L_{\alpha_{L}(p)} \Z_p\right)^C=\left(\frac{\lambda_1^{\alpha_{L}(p)} + \lambda_2^{\alpha_{L}(p)}}{2} + p^2\Z_p\right)^C,
\end{split}    
\end{equation*}
and
\begin{equation*}\label{proof, equation2 of Kalpha in ramified case Lucas2}
\begin{split}
K_{\alpha_{L}(p)-2}= \left(\frac{\lambda_1^{\alpha_{L}(p)-2} + \lambda_2^{\alpha_{L}(p)-2}}{2} + p L_{\alpha_{L}(p)-2}\Z_p\right)^C=\left(\frac{\lambda_1^{\alpha_{L}(p)-2} + \lambda_2^{\alpha_{L}(p)-2}}{2} + p\Z_p\right)^C.    
\end{split}    
\end{equation*}
As in case II when $p^2\mid D$, we have
\begin{equation}\label{proof, equation2 on empty intersection of two balls}
 \left(\frac{\lambda_1^{\alpha_{L}(p)} + \lambda_2^{\alpha_{L}(p)}}{2} + p^2 \Z_p\right)\bigcap \left(\frac{\lambda_1^{\alpha_{L}(p)-2} + \lambda_2^{\alpha_{L}(p)-2}}{2} + p\Z_p\right)= \varnothing.   
\end{equation}
Hence,
$$K_{\alpha_{L}(p)}\bigcup K_{\alpha_{L}(p)-2}= \Q_p.$$
\vspace{1 cm}

\bibliographystyle{plain}

\end{document}